\newcommand*{\rom}[1]{\expandafter\@slowromancap\romannumeral #1@}
\author{\scshape Lian Haeming \\ \textit{Queen Mary University of London}}
\newcommand{\bigslant}[2]{{\left.\raisebox{.1em}{$#1$}\middle/\raisebox{-.1em}{$#2$}\right.}}
\DeclareMathOperator{\arcosh}{arcosh}
\numberwithin{equation}{section}
\title{ {\scshape \bfseries \Large  on the bandwidths of periodic approximations to discrete schr\"odinger operators } } 
\date{}
\newcommand\restr[2]{{
  \left.\kern-\nulldelimiterspace 
  #1 
  \vphantom{\big|} 
  \right|_{#2} 
  }}
\newcommand\PR{\mathds{P}}
\newcommand\E{\mathds{E}}
\newcommand\R{\mathbb{R}}
\newcommand\C{\mathbb{C}}
\newcommand\Z{\mathbb{Z}}
\newcommand\N{\mathbb{N}}
\newcommand\Q{\mathbb{Q}}
\newcommand\1{\mathds{1}}
\newcommand{\newreptheorem}[2]{\newtheorem*{rep@#1}{\rep@title}\newenvironment{rep#1}[1]{\def\rep@title{#2 \ref*{##1}}\begin{rep@#1}}{\end{rep@#1}}}
\def\restrict#1{\raise-.5ex\hbox{\ensuremath|}_{#1}}
\newtheorem{thm}{\normalfont\scshape  Theorem}
\newtheorem{prop}[thm]{\normalfont\scshape Proposition}
\newtheorem{cor}[thm]{\normalfont\scshape Corollary}
\newtheorem{fig}{\normalfont\scshape Fig}
\newtheorem{lemma}{\normalfont\scshape  Lemma}[section]
\newtheorem{claim}[lemma]{\normalfont\scshape Claim}
\newtheorem{rmk}[lemma]{\normalfont\scshape Remark}
\let\hdrtitle\@title
\titlespacing*{\section}{0pt}{4ex}{1.5ex}
\titleformat{\section}[block]{\color{black}\scshape\filcenter}{\thesection.}{0.5em}{}
\begin{document}
\renewcommand{\abstractname}{\vspace{-\baselineskip}}
\maketitle

\begin{abstract}
We study how the spectral properties of ergodic Schr\"odinger operators are reflected in the asymptotic properties of its periodic approximation as the period tends to infinity. The first property we address is the asymptotics of the bandwidths on the logarithmic scale, which quantifies the sensitivity of the finite volume restriction to the boundary conditions. We show that the bandwidths can always be bounded from below in terms of the Lyapunov exponent. Under an additional assumption satisfied by i.i.d potentials, we also prove a matching upper bound. Finally, we provide an additional assumption which is also satisfied in the i.i.d case, under which the corresponding eigenvectors are exponentially localised with a localisation centre independent of the Floquet number. 
\end{abstract}


\section*{Introduction}
Let $(\Omega,\mathcal{F},\PR,T)$ be an ergodic dynamical system and for convenience assume that the transformation $T$ is invertible. Let $f:\Omega\rightarrow \R$ be a bounded measurable function. To each outcome $\omega\in \Omega$ we associate a bounded discrete one-dimensional Schr\"odinger operator 
\begin{equation}\label{ergodicop}
H_{\omega}:\ell^2(\mathbb{Z}) \rightarrow \ell^2(\mathbb{Z}),\quad H_{\omega} = \Delta+V_{\omega}
\end{equation}
where $(\Delta\psi)(k) = \psi(k-1)+\psi(k+1) $ is the one-dimensional discrete free Laplacian and  $V_{\omega}$ is a multiplication operator defined by  $(V_{\omega}\psi)(k) = V_{\omega}(k)\psi(k) = f(T^{k}\omega)\psi(k)$. 

The main examples of ergodic Schr\"odinger operators \eqref{ergodicop} are: (I). Independent identically distributed (i.i.d) potentials, which correspond to the dynamical system 
\begin{equation*}
(\Omega,\mathcal{F},\PR,T) = \left(\Omega_{0}^{\Z},\mathcal{B}\left(\Omega_{0}^{\Z}\right),\mu^{\Z},T\right)
\end{equation*}
 where  $\Omega_{0}\subset\R$ is a bounded Borel set, $T$ is the left shift, $\mu$ is a Borel measure on $\Omega_{0}$ and   $f(\omega)=\omega(0)$. (II). One-frequency quasi-periodic potentials, which correspond to the irrational shift 
 \begin{equation*}
 (\Omega,\mathcal{F},\PR,T) = \big(\bigslant{\R}{\Z},\mathcal{B}\big(\bigslant{\R}{\mathbb{Z}}\big), m, \omega\mapsto\omega+\alpha\big)
 \end{equation*} where $m$ is the one-dimensional Lebesgue measure and $\alpha\in\R\setminus\Q$. 

We study the restriction of the ergodic operator \eqref{ergodicop} to a large finite interval $[0,q-1]$. The first property we investigate is the sensitivity of these restrictions to the boundary conditions. 

For convenience we shall represent the finite volume restriction of the operator \eqref{ergodicop} as an operator acting on functions on the discrete circle $\Z_{q} = \Z/q\Z$ with the representatives $\overline{0}_{q},\dots,\overline{q-1}_{q}$, as opposed to the usual equivalent notion of a matrix acting on the space $\C^{[0,q-1]}$. To this end, for $\varkappa\in\R/\frac{2\pi}{q}\Z$,  define
\begin{equation}\label{floquetmatrix}
\begin{split}
H_{q}[\omega,\varkappa]&:\C^{\Z_{q}}\rightarrow \C^{\Z_{q}} \\ 
(H_{q}[\omega,\varkappa]\psi)(x) = e^{i\varkappa}\psi(x-\overline{1}_{q}&) + e^{-i\varkappa}\psi(x+\overline{1}_{q}) + V_{\omega,q}(x)\psi(x),
\end{split}
\end{equation}
where $V_{\omega,q}(\overline{k}_{q})=V_{\omega}(k)$ for $k\in[0,q-1]$. The Floquet numbers $\varkappa=0,\frac{\pi}{q}$ correspond to periodic and anti-periodic boundary conditions, respectively. The difference between an eigenvalue of $H_{q}[\omega,0]$ and that of $H_{q}[\omega,\frac{\pi}{q}]$ quantifies the sensitivity to boundary conditions.

In the context of random Schr\"odinger operators in arbitrary dimensions, Edwards and Thouless \cite{EdTh} put forth the sensitivity to periodic and anti-periodic boundary conditions as a signature of Anderson localisation: in the regime of localisation, this quantity should decay exponentially in the period. In the subsequent works \cite{Thouless1}-\cite{Thouless4}, Thouless focused on the critical almost Mathieu operator given by (II) with $f(\omega) = 2\cos(2\pi\omega)$, part of the conclusions of the analysis of \cite{Thouless1}-\cite{Thouless4} was justified on the mathematical level of rigour in the work of Last \cite{ZeroMeasureSpec}, see further the paper of Jitomirskaya,  Konstantinov and Krasovsky \cite{LanaKonstantinovKrasovsky}. 

The operators $H_{q}[\omega,\varkappa]$ also appear in the spectral analysis of the periodic approximations to $H_{\omega}$. Let $\widetilde{V}_{\omega,q}(k)=f(T^{k\text{ mod } q} \omega)$ and let $H_{\omega,q}=\Delta+\widetilde{V}_{\omega,q}$ be the periodic approximation to $H_{\omega}$. By Floquet theory (see e.g. \cite{Last-Periodic} Proposition 2.1 and Section \ref{sec:1}, below) the operator $H_{\omega,q}$ is unitarily equivalent to the direct integral of $H_{q}[\omega,\varkappa]$ over $\varkappa\in\R/\frac{2\pi}{q}\Z$. In particular, the spectrum of $H_{\omega,q}$ is the union over the eigenvalues of $H_{q}[\omega,\varkappa]$ and $\varkappa\in \R/\frac{2\pi}{q}\Z$. In the ergodic case \eqref{ergodicop} let $B_{\omega,q}^{(j)} $ be the $j$-th band of the periodic operator $H_{\omega,q}$ ordered from left to right and let $b_{\omega,q}^{(j)}$ be its centre. 

In the quasi-periodic case (II) there is an alternative notion of periodic approximation: let $\alpha_{n}=\frac{p_{n}}{q_{n}}\rightarrow \alpha\in\R\setminus\Q$ and $V_{\omega}^{(\alpha_{n})}(k) = f(\omega +k \alpha_{n})$. We let $B_{\omega}^{(\alpha_{n},j)}$ be the $j$-th band of  $H_{\omega}^{(\alpha_{n})} = \Delta + V_{\omega}^{(\alpha_{n})}$ ordered from left to right and let $b_{\omega}^{(\alpha_{n},j)}$ be its centre.

Here we study the asymptotics of the bandwidths on the logarithmic scale. It turns out that in many cases 
\begin{equation}\label{approx}
-q^{-1}\log m(B_{\omega,q}^{(j)}) \approx \gamma(b_{\omega,q}^{(j)})
\end{equation}
where $\gamma(E) = \lim_{q\rightarrow\infty} q^{-1} \E\log\|\Phi_{\omega,q}(E)\|$ is the Lyapunov exponent. $\Phi_{\omega,q}(E)$ is the $q$-step transfer matrix of \eqref{ergodicop}, given by $\Phi_{\omega,q}(E)=
\begin{pmatrix}
E-V_{\omega}(q-1) & -1 \\
1 & 0
\end{pmatrix} 
\cdots 
\begin{pmatrix}
E-V_{\omega}(0) & -1 \\
1 & 0
\end{pmatrix}
$.

\begin{figure}[H]
\centering
\begin{subfigure}{.5\textwidth}
  \centering
  \includegraphics[width=1\linewidth, height=0.3\textheight]{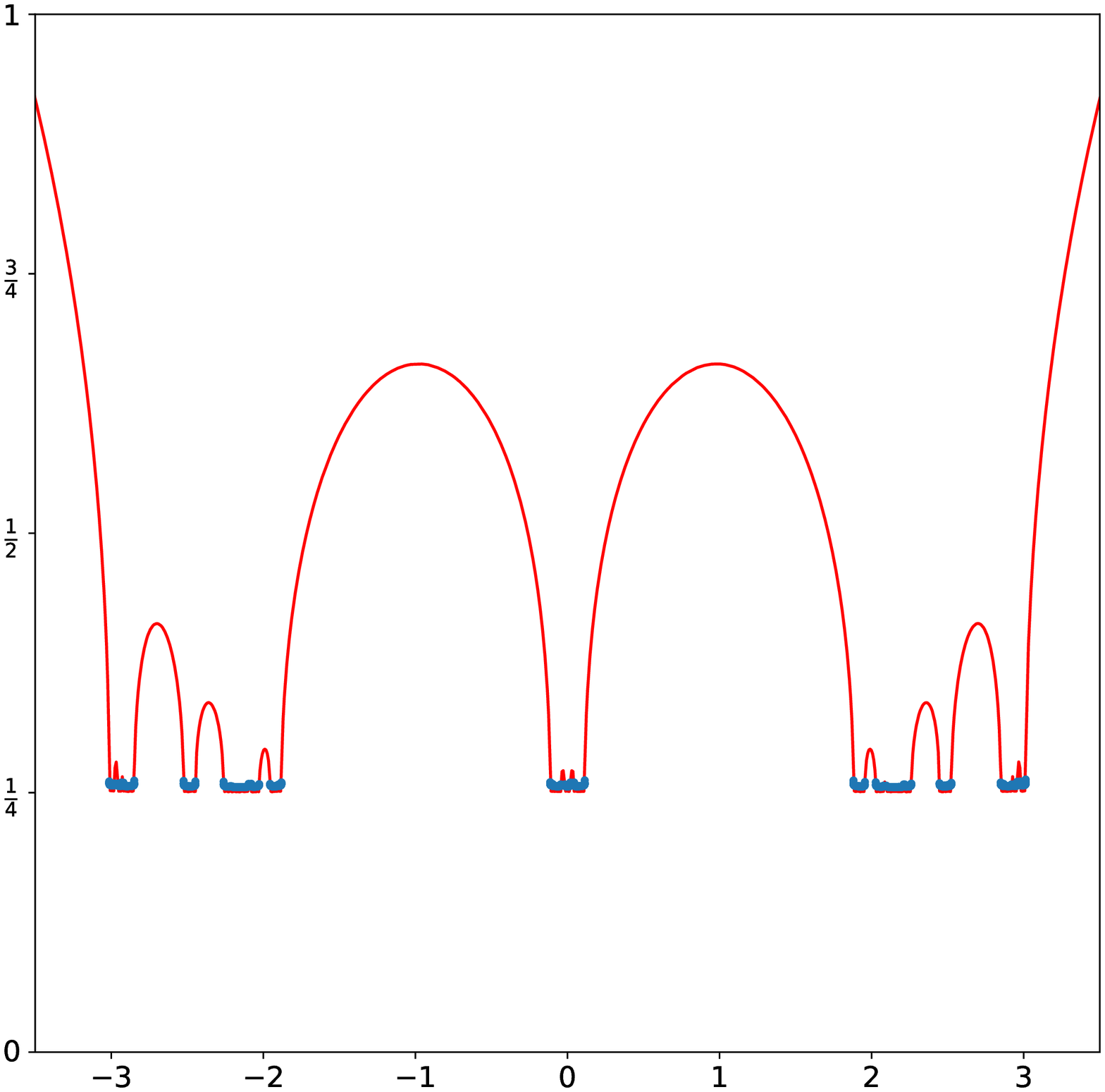} 
\end{subfigure}%
\begin{subfigure}{.5\textwidth}
  \centering
  \includegraphics[width=1\linewidth, height=0.3\textheight]{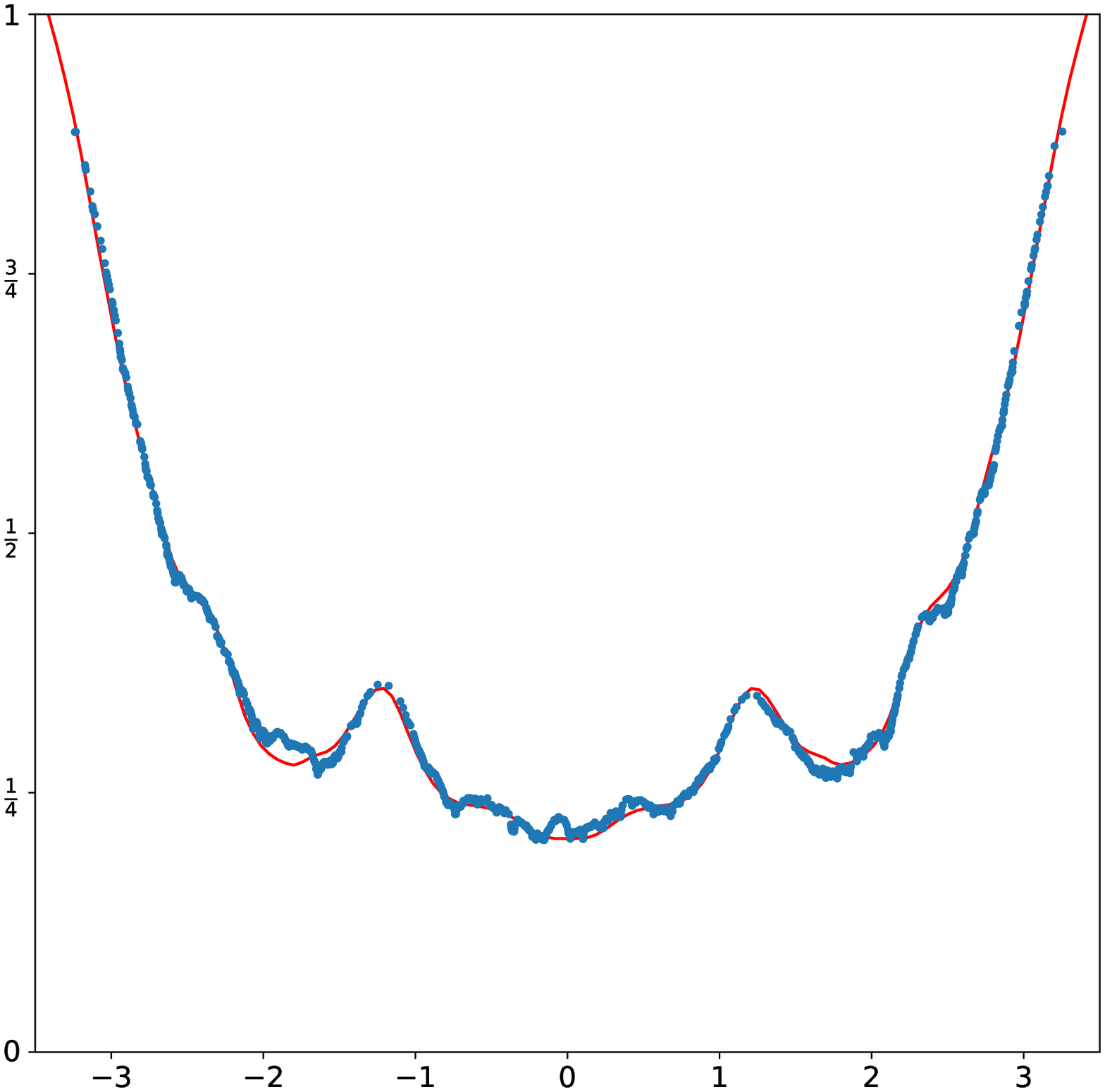} 
\end{subfigure}
\end{figure}
\begin{fig}
The plot on the left corresponds to the almost Mathieu operator with potential function given by $ 2e^{\frac{1}{4}}\cos(2\pi (\sqrt{3}+k\sqrt{2}))$ whereas the right one corresponds to an i.i.d operator with underlying uniform distribution on $[-\frac{3}{2},-1]\cup[1,\frac{3}{2}]$. The blue dots on the left and on the right have coordinates given by $\big(b_{\sqrt{3}}^{(\frac{1393}{985},\cdot)}, -985^{-1}\log  m(B_{\sqrt{3}}^{(\frac{1393}{985},\cdot)})\big)$ and $\big(b_{\omega,1400}^{(\cdot)},-1400^{-1}\log m(B_{\omega,1400}^{(\cdot)})\big)$, respectively. In both cases, the red line represents the Lyapunov exponent of the respective operator. 
\end{fig}

Our Theorem \ref{ergodiccontinuous} justifies one direction of \eqref{approx} while Theorem \ref{cl:9}
justifies the second direction under an additional assumption. As we discuss below, this assumption holds with large probability in the i.i.d case.

\begin{thm}\label{ergodiccontinuous}
If $H_{\omega}$ is an ergodic Schr\"odinger operator \eqref{ergodicop} with continuous Lyapunov exponent $\gamma$, then with full probability 
$$
\limsup_{q\rightarrow \infty}\max_{j\in[1,q]} \big( -q^{-1}\log m(B_{\omega,q}^{(j)})-\gamma(b_{\omega,q}^{(j)}) \big) \leq 0.
$$
If $T$ is a uniquely ergodic transformation then   
$$\limsup_{q\rightarrow \infty}\max_{j\in[1,q],\omega\in\Omega} \big( -q^{-1}\log m(B_{\omega,q}^{(j)})-\gamma(b_{\omega,q}^{(j)}) \big) \leq 0.$$
 In the quasi-periodic case \emph{(II)}, if $\alpha_{n}=\frac{p_{n}}{q_{n}}\rightarrow \alpha\in\R\setminus\Q$ is a sequence of rationals then, 
$$
\limsup_{n\rightarrow \infty}\max_{j\in[1,q_{n}], \omega\in\Omega}\big(-q_{n}^{-1}\log m(B_{\omega}^{(\alpha_{n},j)})-\gamma(b_{\omega}^{(\alpha_{n},j)}) \big) \leq 0.$$
\end{thm}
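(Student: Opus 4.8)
The plan is to bound each bandwidth below by the reciprocal of the maximal derivative of the discriminant on the band, and then to bound that derivative above by $e^{q\gamma}$ using the Lyapunov behaviour of finite transfer matrices. Write $D_{\omega,q}(E)=\operatorname{tr}\Phi_{\omega,q}(E)$ for the discriminant, so that $\sigma(H_{\omega,q})=\{E:|D_{\omega,q}(E)|\le2\}$ and on each band $B_{\omega,q}^{(j)}$ the function $D_{\omega,q}$ is strictly monotone onto $[-2,2]$ (self-adjointness of the matrices $H_q[\omega,\varkappa]$ for all $\varkappa$ rules out degenerate bands). Hence the total variation of $D_{\omega,q}$ over the band equals $4$, whence
\begin{equation*}
m(B_{\omega,q}^{(j)})\ \ge\ 4\Big/\max_{E\in B_{\omega,q}^{(j)}}\big|D_{\omega,q}'(E)\big|,
\end{equation*}
and it suffices to prove $\max_{E\in B_{\omega,q}^{(j)}}|D_{\omega,q}'(E)|\le e^{q(\gamma(b_{\omega,q}^{(j)})+o(1))}$, uniformly in $j$ (and in $\omega$ in the uniquely ergodic and quasi-periodic statements).

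For the derivative, differentiating the product $\Phi_{\omega,q}(E)$ and using that each one-step transfer matrix has $E$-derivative $\big(\begin{smallmatrix}1&0\\0&0\end{smallmatrix}\big)$ yields the identity $D_{\omega,q}'(E)=\sum_{k=0}^{q-1}\big(\Phi_{\omega}^{[0,k-1]}(E)\,\Phi_{\omega}^{[k+1,q-1]}(E)\big)_{11}$, where $\Phi_{\omega}^{[a,b]}(E)$ denotes the transfer matrix of \eqref{ergodicop} across the sites $a,\dots,b$ (empty product $=I$). Therefore
\begin{equation*}
\big|D_{\omega,q}'(E)\big|\ \le\ q\max_{0\le k\le q-1}\big\|\Phi_{\omega}^{[0,k-1]}(E)\big\|\,\big\|\Phi_{\omega}^{[k+1,q-1]}(E)\big\|,
\end{equation*}
where the two blocks are disjoint with lengths summing to $q-1$; writing the second as a backward transfer matrix, both are products along blocks with a fixed endpoint and varying length.

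The key input is then a uniform upper bound: for every $\varepsilon>0$ there is $L$ such that $\|\Phi_{\omega}^{[a,b]}(E)\|\le e^{(b-a+1)(\gamma(E)+\varepsilon)}$ for all blocks of length $\ge L$ and all $E$ in a fixed neighbourhood of $\sigma(H_\omega)$; combined with the trivial $C^{\,b-a+1}$ bound for short blocks this gives $|D_{\omega,q}'(E)|\le e^{q(\gamma(E)+\varepsilon)+o(q)}$ for $E$ near the spectrum. Since $H_{\omega,q}\to H_\omega$ strongly and the integrated density of states is continuous, the bands concentrate on $\sigma(H_\omega)$ and $\max_j m(B_{\omega,q}^{(j)})\to0$, so by uniform continuity of $\gamma$ one gets $\max_{E\in B_{\omega,q}^{(j)}}\gamma(E)\le\gamma(b_{\omega,q}^{(j)})+o(1)$ uniformly in $j$. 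Assembling the three displays gives $-q^{-1}\log m(B_{\omega,q}^{(j)})\le\gamma(b_{\omega,q}^{(j)})+\varepsilon+o(1)$ uniformly in $j$; letting $q\to\infty$ and then $\varepsilon\downarrow0$ proves the first two assertions, and the quasi-periodic assertion follows identically with $H_\omega^{(\alpha_n)}$ and $q_n$ in place of $H_{\omega,q}$ and $q$.

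I expect the uniform upper bound on the finite transfer-matrix norms to be the main obstacle. In the uniquely ergodic case it is the uniform convergence of the subadditive cocycle $\ell^{-1}\log\|\Phi_{\omega,\ell}(E)\|$ over a uniquely ergodic base, upgraded to uniformity in $E$ by continuity of $\gamma$ on the compact set $\sigma(H_\omega)$. In the general ergodic case it is the uniform upper semicontinuity of the subharmonic functions $\ell^{-1}\log\|\Phi_{\omega,\ell}(\cdot)\|$, which is exactly the point at which continuity of $\gamma$ must be used. In the quasi-periodic case one additionally invokes upper semicontinuity of $\gamma$ in the frequency in order to pass from $\alpha_n$ to $\alpha$. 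The ancillary inputs — strict monotonicity of $D_{\omega,q}$ on bands, the spectral approximation $\sigma(H_{\omega,q})\to\sigma(H_\omega)$, and $\max_j m(B_{\omega,q}^{(j)})\to0$ — are standard but should be stated carefully.
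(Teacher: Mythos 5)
Your reduction of the theorem to a bound on $\max_{E\in B_{\omega,q}^{(j)}}|\Delta'_{\omega,q}(E)|$ via the total-variation identity is exactly the paper's \eqref{bandintegral}, and for the uniquely ergodic and quasi-periodic statements your plan (split the derivative of the trace at every site and bound each factor by $e^{(\gamma+\varepsilon)\cdot\mathrm{length}}$ uniformly in the phase, using Furman-type uniform convergence over the base) is essentially the paper's proof of the second and third parts of Lemma \ref{thm1}. The genuine gap is in the first, general ergodic, statement. Your ``key input'' asserts that almost surely, for all blocks $[a,b]\subseteq[0,q-1]$ of length at least $L$ and all $E$ in a compact set, $\|\Phi_{\omega}^{[a,b]}(E)\|\leq e^{(b-a+1)(\gamma(E)+\varepsilon)}$. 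Craig--Simon \cite{C-S} (or Kingman) gives this only for products anchored at the fixed origin, for the fixed $\omega$, and pointwise in $E$; it says nothing uniform over the $\sim q$ translated starting points $T^{a}\omega$, $a\leq q$, that your splitting of $\Delta'_{\omega,q}$ requires. Without a large deviation estimate (unavailable for general ergodic potentials) or unique ergodicity there is no maximal inequality over these shifts, and for blocks of bounded length the claim is simply false: among $q$ windows one finds atypical stretches of potential on which the norm grows at a rate unrelated to $\gamma(E)$. Nor can you recover the suffix factor $\Phi_{\omega}^{[k+1,q-1]}$ from the anchored product by writing it as $\Phi_{\omega,q}\Phi_{\omega,k+1}^{-1}$, since $\|\Phi_{\omega,k+1}^{-1}\|=\|\Phi_{\omega,k+1}\|$ and the bound doubles the exponent. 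The paper circumvents the splitting entirely in the ergodic case: it bounds only the full anchored product via $|\Delta_{\omega,q}(E)|\leq 2\|\Phi_{\omega,q}(E)\|$, upgrades the almost-sure pointwise-in-$E$ estimate to a locally uniform one using Egoroff's theorem together with the Remez inequality (exploiting that $\Delta_{\omega,q}$ is a polynomial of degree $q$), and then passes to the derivative by the Markov brothers' inequality, $\sup|\Delta'_{\omega,q}|\leq \frac{2q^{2}}{m(B_{\delta}(E_{0}))}\sup|\Delta_{\omega,q}|$. Some argument of this kind (or another mechanism producing uniformity in the split point) is needed; as written your first statement does not follow.

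Two smaller points. Your justification of $\max_{j}m(B_{\omega,q}^{(j)})\rightarrow 0$ via strong convergence $H_{\omega,q}\rightarrow H_{\omega}$ and continuity of the integrated density of states is not sufficient (a band could a priori be a fixed-length subinterval of an interval of $\sigma(H_{\omega})$); the correct and elementary input is the deterministic bound $m(B_{\omega,q}^{(j)})\leq \frac{2\pi}{q}$ of Lemma \ref{smallbands} (Feynman--Hellmann, or Deift--Simon \cite{DeiftSimon}). In the quasi-periodic part, invoking ``upper semicontinuity of $\gamma$ in the frequency'' does not do the work: the quantity to control is the $q_{n}$-step norm at frequency $\alpha_{n}$ with $q_{n}$ tied to $\alpha_{n}$, and the paper instead fixes a scale $\widetilde q$ at which the $\alpha$-cocycle is uniformly good (Furman \cite{Furman}), transfers this to the $\alpha_{n}$-cocycle by uniform convergence of finitely many matrix entries, and then propagates to all large scales by subadditivity; that approximation step should appear explicitly.
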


\begin{rmk}
 Bourgain and  Jitomirskaya show in \cite{ContinuousLyapunov} that the Lyapunov exponent is continuous in the case of quasi-periodic operators (II) with analytic $f$. S. Klein \cite{SilviusKlein} extended the results of \cite{ContinuousLyapunov} to the case that $f$ satisfies a Gevrey condition:   $\sup|f^{(k)}|\leq AB^{k}(k!)^{s}$ for some $1\leq s<2$ and is not flat at any point.
\end{rmk}

\begin{rmk}
Continuity and in fact H\"older continuity of the Lyapunov exponent for i.i.d potentials (I) goes back to the work of  Le Page \cite{LePage2}. 
\end{rmk}

In order to state the assumption on the potential for Theorem \ref{cl:9}, we first define the set of resonant sites on the circle. Let $H_{q}[\omega,\varkappa]\upharpoonright \Lambda$ denote  the restriction of the operator $H_{q}[\omega,\varkappa]$ to the arc $\Lambda\subseteq\Z_{q}$, with Dirichlet boundary conditions.  We define the Green function of the operator $H_{q}[\omega,\varkappa]$ as 
\begin{equation}\label{greens}
G_{\omega,\varkappa,\Lambda,E,q}= (H_{q}[\omega,\varkappa]\upharpoonright \Lambda-E)^{-1}
\end{equation}
for $E\notin \sigma(H_{q}[\omega,\varkappa]\upharpoonright \Lambda)$. For $x,y\in\Z_{q}$ we write $G_{\omega,\varkappa,\Lambda,E,q}(x,y) = \langle G_{\omega,\varkappa,\Lambda,E,q} \delta_{y},\delta_{x}\rangle$ where $\delta_{x}$ is a delta function at $x\in\Z_{q}$.
 
Let $B_{n}(x)$ denote the arc of half-length $0\leq n\leq \lfloor\frac{q}{2}\rfloor$ centred at $x\in\Z_{q}$. We say that a site $x\in\Z_{q}$ is $(E,\varepsilon,n)$-non-resonant if the Green function \eqref{greens} satisfies 
$$
|G_{\omega,0,B_{n}(x),E,q}(x,x\pm \overline{n}_{q})| <  e^{-(\gamma(E)-\varepsilon)n}
$$
and if this is the case we write $x\in \text{Res}^{\mathsf{c}}_{\omega}(E,\varepsilon,n) = \Z_{q}\setminus \text{Res}_{\omega}(E,\varepsilon,n)$. 
 
The assumption on the potential is that for every energy $E\in K$ where $K$ is a closed interval containing the spectrum, the diameter of the set of resonant sites $\text{Res}_{\omega}(E,\varepsilon,n)$ is at most $2n$ and if this is the case then we write $\omega\in  Q_{\text{NR}}(\varepsilon,n,q)$. 

\begin{thm}\label{cl:9}
Let $H_{\omega}$ be an ergodic Schr\"odinger operator \eqref{ergodicop} with continuous Lyapunov exponent $\gamma$. For any $\varepsilon>0$ there exists $q_{0} = q_{0}(\varepsilon)$ such that if $q>q_{0}$ and $e^{-\frac{1}{2}n\min_{\R}\gamma}<\frac{1}{2}$ then for any $\omega\in Q_{\emph{NR}}(\varepsilon,n,q)$, 
$$
\min_{j\in[1,q]}\big(-q^{-1}\log m\big(B_{\omega,q}^{(j)}\big) -\gamma\big(b_{\omega,q}^{(j)}\big) \big)\geq -3\varepsilon.
$$
\end{thm}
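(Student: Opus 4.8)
\emph{Proposed proof.} The plan is to reduce the bandwidth to the decay of a Floquet eigenvector between two neighbouring sites that can be placed far from the resonant region, and then to invoke the hypothesis $\omega\in Q_{\mathrm{NR}}(\varepsilon,n,q)$. By Floquet theory $B_{\omega,q}^{(j)}=\{E_{j}(\varkappa):\varkappa\in\R/\frac{2\pi}{q}\Z\}$, where $E_{1}(\varkappa)\le\cdots\le E_{q}(\varkappa)$ are the eigenvalues of $H_{q}[\omega,\varkappa]$, and $E=E_{j}(\varkappa)$ is an eigenvalue of $H_{q}[\omega,\varkappa]$ iff $\mathrm{tr}\,\Phi_{\omega,q}(E)=2\cos(q\varkappa)$. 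For any bond $b=\{u,u+\overline 1_{q}\}$ of $\Z_{q}$ a diagonal gauge transformation conjugates $H_{q}[\omega,\varkappa]$ to the operator $\widetilde H_{\theta}^{(b)}$ obtained from $H_{q}[\omega,0]$ by multiplying the two matrix entries carried by $b$ by $e^{\pm i\theta}$, $\theta:=q\varkappa\in\R/2\pi\Z$; hence $E_{j}(\varkappa)$ is independent of $b$, and so is the modulus $\phi_{\theta}:=|\psi_{\theta}^{(b)}|$ of the associated normalised eigenvector (changing $b$ only rotates its phase). On the band interior $E_{j}(\theta)$ is a simple, smooth function of $\theta$, and the Hellmann--Feynman formula gives $E_{j}'(\theta)=-2\,\mathrm{Im}\big(e^{i\theta}\,\overline{\psi_{\theta}^{(b)}(u)}\,\psi_{\theta}^{(b)}(u+\overline 1_{q})\big)$, so $|E_{j}'(\theta)|\le 2\,\phi_{\theta}(u)\,\phi_{\theta}(u+\overline 1_{q})$ for \emph{every} choice of $b$. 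Since $E_{j}(\theta)$ is a monotone branch of $\{\mathrm{tr}\,\Phi_{\omega,q}(\cdot)=2\cos\theta\}$ on $[0,\pi]$, integrating gives
$$m\big(B_{\omega,q}^{(j)}\big)=\big|E_{j}(0)-E_{j}(\pi)\big|\le 2\pi\,\sup_{\theta}\ \min_{x\in\Z_{q}}\phi_{\theta}(x)\,\phi_{\theta}(x+\overline 1_{q}),$$
so it suffices to exhibit, for each $\theta$, a bond on which $\phi_{\theta}$ is exponentially small at rate $\approx q\gamma$.

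Fix $\theta$, put $E=E_{j}(\theta)\in B_{\omega,q}^{(j)}\subseteq K$ and $\mathcal R=\mathrm{Res}_{\omega}(E,\varepsilon,n)$, so $\mathrm{diam}(\mathcal R)\le 2n$ by hypothesis. If $x$ is $(E,\varepsilon,n)$-non-resonant, then choosing $b$ outside $B_{n}(x)$ (legitimate because $\phi_{\theta}$ does not depend on $b$) makes $\widetilde H_{\theta}^{(b)}\!\upharpoonright B_{n}(x)$ equal $H_{q}[\omega,0]\!\upharpoonright B_{n}(x)$, and expanding the eigenvector equation by the resolvent identity on $B_{n}(x)$ yields $\phi_{\theta}(x)\le 2e^{-(\gamma(E)-\varepsilon)n}\max_{\mathrm{dist}(y,x)=n+1}\phi_{\theta}(y)$. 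Applying this at a maximiser $x_{0}$ of $\phi_{\theta}$ and using $e^{-\frac12 n\min_{\R}\gamma}<\frac12$ (so $2e^{-(\gamma(E)-\varepsilon)n}<1$, after reducing to $\varepsilon<\frac12\min_{\R}\gamma$, which is harmless) forces $x_{0}\in\mathcal R$, hence $\mathcal R\subseteq B_{2n}(x_{0})$. Iterating the inequality along a chain of boxes from $x_{0}$ outward gives $\phi_{\theta}(x)\le e^{-(\gamma(E)-2\varepsilon)(\mathrm{dist}(x,x_{0})-Cn)}$ for all $x$ (the factors $2$ and the step length $n+1$ costing only an $O(1/n)$ rate loss, absorbed for $n$ large). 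Evaluating at the two sites nearest the antipode of $x_{0}$ gives a bond at which $\phi_{\theta}\le e^{-(\gamma(E)-2\varepsilon)(q/2-Cn)}$ at both ends, so the displayed bound becomes $m(B_{\omega,q}^{(j)})\le 2\pi\,e^{-(\gamma(E)-2\varepsilon)(q-2Cn)}$.

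Since $\min_{\R}\gamma>0$, the last bound already shows $m(B_{\omega,q}^{(j)})\to0$ uniformly in $j$ as $q\to\infty$; for $q$ large this width lies below the modulus of continuity of $\gamma$ on $K$ at $\varepsilon$, whence $\gamma(E_{j}(\theta))\ge\gamma(b_{\omega,q}^{(j)})-\varepsilon$ for every $\theta$. Substituting and taking logarithms, and distributing the $\varepsilon$'s and the residual errors $2Cn/q$, $q^{-1}\log(2\pi)$ appropriately, one obtains $-q^{-1}\log m(B_{\omega,q}^{(j)})\ge\gamma(b_{\omega,q}^{(j)})-3\varepsilon$ for all $q$ larger than some $q_{0}(\varepsilon)$ (with $n$ in the admissible range, so that $n/q$ is small), which is the claim.

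I expect the crux to be the eigenvector decay of the second paragraph: running the box iteration uniformly in $\theta$ and bookkeeping the multiplicative constants and the $O(1/n)$ rate loss — as well as the $O(n/q)$ and $O(1/q)$ terms of the last step — against the budget $3\varepsilon$, which is exactly what the hypotheses $e^{-\frac12 n\min_{\R}\gamma}<\frac12$ and $q>q_{0}(\varepsilon)$ are there to buy. Two structural observations do the enabling work: the $\varkappa$-dependence can be gauged onto a single bond, and $|\psi_{\theta}^{(b)}|$ is independent of that bond, so the twisted bond can always be kept out of the boxes in play. The non-smoothness of $E_{j}(\theta)$ at the finitely many band edges is immaterial, as it does not affect $|E_{j}(0)-E_{j}(\pi)|=|\int_{0}^{\pi}E_{j}'(\theta)\,d\theta|$.
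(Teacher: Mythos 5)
Your proposal follows essentially the same route as the paper: gauge the Floquet phase onto a single bond, bound $|dE_{j}/d\varkappa|$ by Feynman--Hellmann in terms of the eigenvector's modulus at the two ends of that bond, derive exponential decay of the eigenvector away from the (diameter $\le 2n$) resonant region exactly as in the paper's Lemma \ref{distancedecay}, place the bond near the antipode of the localisation centre, integrate over the Floquet parameter, and trade $\gamma(E_{j}(\varkappa))$ for $\gamma(b_{\omega,q}^{(j)})$ via uniform continuity of $\gamma$ and the smallness of the bands. The only step that does not close as written is the claim that reducing to $\varepsilon<\frac12\min_{\R}\gamma$ is ``harmless'': the resonance condition becomes harder to satisfy as $\varepsilon$ decreases, so $Q_{\text{NR}}(\varepsilon',n,q)\subseteq Q_{\text{NR}}(\varepsilon,n,q)$ for $\varepsilon'<\varepsilon$, and the hypothesis $\omega\in Q_{\text{NR}}(\varepsilon,n,q)$ therefore gives you nothing for a smaller $\varepsilon'$; you cannot deduce the large-$\varepsilon$ statement from the small-$\varepsilon$ one. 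The paper avoids this by keeping the given $\varepsilon$ and discarding the bands with $\gamma(b_{\omega,q}^{(j)})\le 3\varepsilon$ trivially (Lemma \ref{smallbands} gives $m(B_{\omega,q}^{(j)})\le 2\pi/q<1$ for $q\ge 7$, while the target bound is then $\ge 1$); on the remaining bands $\gamma(E)-\varepsilon>\frac12\gamma(E)\ge\frac12\min_{\R}\gamma$, so your contraction $2e^{-(\gamma(E)-\varepsilon)n}<1$ runs with the original $\varepsilon$. With that one-line patch (and the implicit requirement, which you already flag, that $n/q$ be small enough for sites at distance $>Cn$ from the centre to exist), your argument is the paper's proof.
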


This result substantiates in this setting the idea put forth by Edwards and Thouless in the 1970s.

The next proposition shows that in the i.i.d.\ case, Theorem \ref{cl:9} holds with a probability which is exponentially close to $1$ in the period. 

\begin{prop}\label{c0}
Let $H_{\omega}$ be a Schr\"odinger operator with bounded i.i.d potential \emph{(I)}. For any $\varepsilon>0$ there exists $N_{1}=N_{1}(\varepsilon)$ and $c=c(\varepsilon)$ such that if $n>N_{1}$, then
$\PR( Q_{\emph{NR}}(\varepsilon,n,q)) > 1-q^{2}e^{-cn}$.
\end{prop}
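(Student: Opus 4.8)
The plan is to write the complement of $Q_{\mathrm{NR}}(\varepsilon,n,q)$ as a union, over pairs of far-apart sites, of events depending on disjoint blocks of the i.i.d.\ potential, to bound each such event by a single-energy large deviation estimate, and then to pass to all $E\in K$ at once by exploiting that the Green's functions involved are ratios of polynomials in $E$ of degree $O(n)$. To set this up I would fix, for $x\in\Z_q$, the random Borel set $S_x=S_x(\omega):=\{E\in K:\ x\in\mathrm{Res}_\omega(E,\varepsilon,n)\}$, which is measurable with respect to $\mathcal F_x:=\sigma(V_\omega(k):k\in B_n(x))$ because $H_q[\omega,0]\upharpoonright B_n(x)$ depends only on those coordinates. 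Then $\mathrm{diam}\,\mathrm{Res}_\omega(E,\varepsilon,n)>2n$ for some $E\in K$ exactly when there are $x,y\in\Z_q$ with $\mathrm{dist}_{\Z_q}(x,y)>2n$ and $S_x\cap S_y\neq\emptyset$, so a union bound over the $\le q^2$ such pairs reduces the claim to $\PR(S_x\cap S_y\neq\emptyset)\le e^{-cn}$ for each such pair. For these pairs $B_n(x)$ and $B_n(y)$ are disjoint, so $\mathcal F_x$ and $\mathcal F_y$, hence $S_x$ and $S_y$, are independent; and since $Q_{\mathrm{NR}}(\varepsilon,n,q)$ increases in $\varepsilon$ I may assume $\varepsilon<\min_K\gamma$.

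The first quantitative step is a single-energy bound: $\sup_{E\in K}\PR(E\in S_x)\le e^{-c_1 n}$ for a suitable $c_1=c_1(\varepsilon)>0$ and $n$ large. Writing $D_\Lambda(E):=\det(E-H_q[\omega,0]\upharpoonright\Lambda)$, Cramer's rule gives $G_{\omega,0,B_n(x),E,q}(x,x+\overline n_q)=D_{[x-n,x-1]}(E)/D_{B_n(x)}(E)$, and the analogous identity with $D_{[x+1,x+n]}$ at the other end. Since $D_\Lambda(E)$ is the top-left entry of the $|\Lambda|$-step transfer matrix over $\Lambda$, we have $|D_{[x-n,x-1]}(E)|\le\|\Phi_{\omega,n}(E)\|$ and $D_{B_n(x)}(E)=\langle\Phi_{\omega,2n+1}(E)e_1,e_1\rangle$, so resonance of $x$ forces $\|\Phi_{\omega,n}(E)\|>e^{(\gamma(E)+\varepsilon/6)n}$ for one of the two half-windows $[x-n,x-1],[x+1,x+n]$, or $|\langle\Phi_{\omega,2n+1}(E)e_1,e_1\rangle|<e^{(2\gamma(E)-\varepsilon/6)n}$ — an upper large deviation for the transfer-matrix norm, respectively a lower large deviation for a matrix coefficient. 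For a nondegenerate i.i.d.\ potential the Schrödinger cocycle generates a strongly irreducible, contracting semigroup and $\gamma$ is positive (Furstenberg) and continuous on $K$, and Le Page's large deviation theorem then bounds each of these probabilities by $e^{-c_1 n}$ uniformly for $E\in K$; the same holds with $\varepsilon/2$ in place of $\varepsilon$.

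Next I would handle all $E$ at once. Using uniform continuity of $\gamma$, choose $E_1,\dots,E_M$, $M=M(\varepsilon)$, and $\delta>0$ with $|\gamma-\gamma(E_i)|<\varepsilon/4$ on $J_i:=(E_i-\delta,E_i+\delta)\cap K$. On $J_i$, resonance of $x$ via the $x+\overline n_q$ end implies $D_{[x-n,x-1]}(E)^2-e^{-2(\gamma(E_i)-3\varepsilon/4)n}D_{B_n(x)}(E)^2\ge 0$, the nonnegativity of a real polynomial in $E$ of degree $\le 4n+2$ whose coefficients are polynomials in $\{V_\omega(k):k\in B_n(x)\}$, and likewise at the other end. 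Hence $S_x$ lies in a union of at most $Cn$ ($C=C(\varepsilon)$) intervals whose endpoints are among the $O_\varepsilon(1)$ numbers $E_i\pm\delta$ or are roots of these polynomials, hence $\mathcal F_x$-measurable; moreover for fixed $E\in J_i$ the displayed polynomial inequality in turn forces $|D_{[x-n,x-1]}(E)|/|D_{B_n(x)}(E)|\ge e^{-(\gamma(E)-\varepsilon/2)n}$, which by the single-energy bound (with $\varepsilon/2$) has probability $\le e^{-c_1 n}$, so $\sup_{E\in K}\PR(E\in S_x)\le e^{-c_1 n}$ survives for the enlarged $S_x$. Now two finite unions of $\le Cn$ intervals meet only if a left endpoint $p$ of one lies in the other; such $p$ belongs to $K$ and is $\mathcal F_x$- (resp.\ $\mathcal F_y$-) measurable, so by independence $\PR(p\in S_y)\le\sup_E\PR(E\in S_y)\le e^{-c_1 n}$, and symmetrically. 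Summing over the $\le Cn$ endpoints of each set, $\PR(S_x\cap S_y\neq\emptyset)\le 2Cn\,e^{-c_1 n}\le e^{-cn}$ for $n>N_1(\varepsilon)$, whence $\PR(Q_{\mathrm{NR}}(\varepsilon,n,q))\ge 1-q^2 e^{-cn}$.

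The hard part will be the single-energy bound, and within it the uniform lower large deviation $\PR(|\langle\Phi_{\omega,m}(E)e_1,e_1\rangle|<e^{(\gamma(E)-\delta)m})\le e^{-c(\delta)m}$: besides the one-sided large deviation for $\|\Phi_{\omega,m}(E)e_1\|$ (available because $e_1$ is not asymptotically contracted by the Schrödinger cocycle), this requires a Hölder-type regularity estimate, uniform in $m$ and in $E\in K$, for the law of the most-expanded direction, so that the matrix coefficient cannot be anomalously small compared with the norm — precisely what Le Page's work supplies. The remaining points are routine: the passage from $\|\Phi\|$ to $|D_\Lambda|$, and the measurable enumeration of the $O(n)$ interval endpoints of $S_x$ as functions of $V_\omega$ on $B_n(x)$.
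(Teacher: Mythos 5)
Your proposal is correct and follows essentially the same route as the paper: a single-energy large deviation bound for the Green function via Cramer's rule and large deviations for the determinants/transfer-matrix entries, the observation that the resonant energy set of a site is a union of $O(n)$ intervals measurable with respect to the potential on its window, an endpoint-plus-independence (Fubini) argument for two sites at distance greater than $2n$, and a union bound over the $\le q^{2}$ pairs. The only cosmetic differences are that you invoke Le Page-type large deviations (upper for norms, lower for matrix coefficients) where the paper cites Tsay's two-sided estimate on the determinants, and that you freeze $\gamma$ on a finite partition of $K$ to count the intervals, where the paper argues directly from the rational-function structure of the Green function.
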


\begin{rmk}
Statements of this kind appear in the classical works on localisation in the infinite volume case, for example in \cite{CKM} using multi-scale analysis. Here we adapt it to the finite volume case for periodic operators following the more recent strategy of  Jitomirskaya and Zhu \cite{LanaZhu} of obtaining a large deviation estimate on the Green function \eqref{greens}. 
\end{rmk}

Applying the Borel-Cantelli lemma to Theorem \ref{cl:9} and combining this with the first part of Theorem \ref{ergodiccontinuous} gives, in the i.i.d.\ case, the exact bandwidth distribution in the limit: 

\begin{cor}
Let $H_{\omega}$ be a Schr\"odinger operator with bounded i.i.d potential \emph{(I)}. With full probability  
$$\lim_{q\rightarrow\infty} \max_{j\in[1,q]}\left|-q^{-1}\log m\big(B_{\omega,q}^{(j)}\big)-\gamma\big(b_{\omega,q}^{(j)}\big)\right| = 0.$$
\end{cor}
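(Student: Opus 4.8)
The plan is to derive the corollary by combining the two one-sided estimates already in hand through a Borel--Cantelli argument, exactly as the paragraph preceding the statement sketches. The upper bound is immediate: an i.i.d.\ potential (I) has a H\"older continuous Lyapunov exponent (Le Page), so the first part of Theorem \ref{ergodiccontinuous} applies verbatim and produces a full-measure event on which $\limsup_{q\to\infty}\max_{j\in[1,q]}\big(-q^{-1}\log m(B_{\omega,q}^{(j)})-\gamma(b_{\omega,q}^{(j)})\big)\le 0$; nothing further is needed for this direction.

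For the lower bound I would fix $\varepsilon>0$ and couple the two scales by setting $n=n_{q}:=\lceil\beta(\varepsilon)\log q\rceil$ for a constant $\beta$ chosen large enough that, for all sufficiently large $q$, three conditions hold simultaneously: $n_{q}>N_{1}(\varepsilon)$ (so Proposition \ref{c0} applies), $c(\varepsilon)\,n_{q}\ge 4\log q$ (so that $\sum_{q}q^{2}e^{-c(\varepsilon)n_{q}}\le\sum_{q}q^{-2}<\infty$), and $e^{-\frac12 n_{q}\min_{\R}\gamma}<\frac12$ (the remaining hypothesis of Theorem \ref{cl:9}). The last condition uses $\min_{\R}\gamma>0$, which for a non-degenerate $\mu$ follows from Furstenberg positivity together with continuity of $\gamma$ and $\gamma(E)\to\infty$ as $|E|\to\infty$; the degenerate case of a constant potential is elementary and can be handled separately. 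Proposition \ref{c0} then gives $\sum_{q}\PR\big(\Omega\setminus Q_{\mathrm{NR}}(\varepsilon,n_{q},q)\big)<\infty$, so by Borel--Cantelli there is a full-measure event on which $\omega\in Q_{\mathrm{NR}}(\varepsilon,n_{q},q)$ for every sufficiently large $q$. For those $q$, Theorem \ref{cl:9} yields $\min_{j\in[1,q]}\big(-q^{-1}\log m(B_{\omega,q}^{(j)})-\gamma(b_{\omega,q}^{(j)})\big)\ge -3\varepsilon$, hence $\liminf_{q\to\infty}$ of these minima is $\ge -3\varepsilon$ almost surely.

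Finally I would intersect the upper-bound event with the lower-bound events obtained along $\varepsilon=1/k$, $k\in\N$: on this single full-measure event the $\limsup$ of the maxima is $\le 0$ while the $\liminf$ of the minima is $\ge 0$, and writing $a_{j,q}:=-q^{-1}\log m(B_{\omega,q}^{(j)})-\gamma(b_{\omega,q}^{(j)})$ and using $\max_{j}|a_{j,q}|=\max\big(\max_{j}a_{j,q},-\min_{j}a_{j,q}\big)$ these two bounds force $\lim_{q\to\infty}\max_{j\in[1,q]}|a_{j,q}|=0$. I expect no genuine obstacle: the only point demanding care is the coupling $n_{q}\sim\beta\log q$, which must grow slowly enough relative to $q$ that $n_{q}\le\lfloor q/2\rfloor$ and the non-resonance scale stays microscopic, yet fast enough that the exceptional probabilities are summable — a matter of bookkeeping rather than substance, with any $\beta>4/c(\varepsilon)$ working.
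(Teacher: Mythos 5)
Your proposal is correct and follows essentially the same route the paper indicates: the first part of Theorem \ref{ergodiccontinuous} (applicable since Le Page gives continuity of $\gamma$ for i.i.d.\ potentials) for the upper direction, and Borel--Cantelli applied to Proposition \ref{c0} with a scale $n_q\sim\beta\log q$ (matching the paper's own remark at the start of Section \ref{sec:5} that the estimate is useful for $n>C'(\varepsilon)\log q$) feeding into Theorem \ref{cl:9} for the lower direction, then intersecting over $\varepsilon=1/k$. The bookkeeping you supply (summability via $\beta>4/c(\varepsilon)$, the hypothesis $e^{-\frac12 n\min_{\R}\gamma}<\frac12$ from positivity of $\gamma$, and $n_q\le\lfloor q/2\rfloor$) is exactly what the paper leaves implicit.
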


Our next result pertains to the Anderson localisation of the eigenvectors of the operator $H_{q}[\omega,\varkappa]$. 

The arguments from the theory of Anderson localisation yield that the eigenvectors of $H_{q}[\omega,\varkappa]$ are exponentially localised for any fixed Floquet number $\varkappa\in\R/\frac{2\pi}{q}\Z$. The formal statement (and proof) appears as Lemma \ref{distancedecay} below and is the main ingredient used in the proof of Theorem \ref{cl:9}. 

It is natural to ask whether the localisation centres actually depend on the Floquet number. We show in the next theorem that this is not the case, under the additional assumption on the potential of eigenvalue separation: Let $Q_{\text{Sep}}(\varepsilon,q)$ denote the set of outcomes $\omega\in\Omega$ for which  the eigenvalues of the operator $H_{q}[\omega,0]$ are separated by at least $e^{-\varepsilon q}$. 

\begin{thm}\label{uniformlemma}
Let $H_{\omega}$ be an ergodic Schr\"odinger operator \eqref{ergodicop} and $H_{q}[\omega,\varkappa]\psi_{\omega,\varkappa}^{(j)}=E_{\omega,\varkappa}^{(j)}\psi_{\omega,\varkappa}^{(j)}$. For any $0<4\varepsilon<\min_{\R}\gamma$ there exists $C=C(\varepsilon)$ and $q_{0}=q_{0}(\varepsilon)$ such that if $q>q_{0}$ and $e^{-\frac{1}{2}n\min_{\R}\gamma }<\frac{1}{2}$ then for any  $\omega\in Q_{\emph{Sep}}(\frac{\varepsilon}{10},q)\cap Q_{\emph{NR}}(\varepsilon,n,q)$ there exists $\{\nu_{\omega,j}\}_{j}\subseteq\Z_{q}$ such that for each $j=1,\dots,q$, if $\|x-\nu_{\omega,j}\|_{q}>Cn$ then 
\begin{equation*}
\max_{\varkappa\in\R/\frac{2\pi}{q}\Z}\big| \psi_{\omega,\varkappa}^{(j)}(x)\big| < e^{-\big(\gamma\big( E_{\omega,0}^{(j)}\big)-3\varepsilon\big)\|x-\nu_{\omega,j}\|_{q}}
 \end{equation*} where $ \|x-y\|_{q}=\min_{x_{0}\in x,y_{0}\in y}|x_{0}-y_{0}|$. 
\end{thm}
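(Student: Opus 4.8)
The plan is to fix $j$, read off a candidate localisation centre $\nu_{\omega,j}$ from the periodic ($\varkappa=0$) problem via Lemma~\ref{distancedecay}, and then show that every other Floquet number produces an eigenvector localised about the \emph{same} centre, up to an $O(n)$ shift. The device that makes this work is a diagonal gauge transformation concentrating the entire $\varkappa$‑dependence of $H_q[\omega,\varkappa]$ onto a single bond of $\Z_q$, together with the freedom to place that bond where we please — in particular antipodally to $\nu_{\omega,j}$, which is exactly what removes the otherwise delicate case of a centre sitting near a fixed seam.

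First I would invoke Lemma~\ref{distancedecay} for $H_q[\omega,0]$ to obtain $\nu_{\omega,j}$ with $|\psi_{\omega,0}^{(j)}(x)|\le e^{-(\gamma(E_{\omega,0}^{(j)})-\varepsilon)\|x-\nu_{\omega,j}\|_q}$ once $\|x-\nu_{\omega,j}\|_q\gtrsim n$; since $\omega\in Q_{\mathrm{NR}}(\varepsilon,n,q)$ the resonant set is a single arc of diameter $\le 2n$, so $\psi_{\omega,0}^{(j)}$ lives, up to exponential tails, in an arc of size $O(n)$ about $\nu_{\omega,j}$, and the same lemma applied with a general $\varkappa$ localises $\psi_{\omega,\varkappa}^{(j)}$ exponentially about some centre $\mu_{\omega,\varkappa}^{(j)}$ in an arc of size $O(n)$. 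Second, Theorem~\ref{cl:9} (applicable, since $\omega\in Q_{\mathrm{NR}}$) bounds every bandwidth by $e^{-(\min_\R\gamma-3\varepsilon)q}\le e^{-\varepsilon q}$, so all the $E_{\omega,\varkappa}^{(j)}$ lie within $e^{-\varepsilon q}$ of $E_{\omega,0}^{(j)}$; combined with $\omega\in Q_{\mathrm{Sep}}(\tfrac{\varepsilon}{10},q)$ this shows that, for $q$ large, $E_{\omega,\varkappa}^{(j)}$ is for every $\varkappa$ the unique point of $\sigma(H_q[\omega,\varkappa])$ within $\tfrac14 e^{-\varepsilon q/10}$ of $E_{\omega,0}^{(j)}$ and is a simple eigenvalue (the associated spectral projection has $\varkappa$‑independent rank, by continuity of $\varkappa\mapsto H_q[\omega,\varkappa]$ and constancy of the rank of a spectrally isolated projection, equal to its value $1$ at $\varkappa=0$ guaranteed by $Q_{\mathrm{Sep}}$). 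Finally, uniform continuity of $\gamma$ on $K$ gives $|\gamma(E_{\omega,\varkappa}^{(j)})-\gamma(E_{\omega,0}^{(j)})|<\varepsilon$ for $q$ large.

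Now the gauge step: for $m\in\Z_q$ let $D_m$ be the diagonal unitary with $(D_m)_{kk}=e^{i((k-m)\bmod q)\varkappa}$; then $D_m^{*}H_q[\omega,\varkappa]D_m=H_q[\omega,0]+\Gamma_\varkappa^{(m)}$, where $\Gamma_\varkappa^{(m)}$ is the rank‑$\le2$ operator, of norm $\le 2$, carrying the flux $e^{\mp iq\varkappa}$ on the single bond $(\overline{m-1}_q,\overline{m}_q)$, and conjugation by $D_m$ leaves $|\psi_{\omega,\varkappa}^{(j)}(x)|$ unchanged at every site. Choose $m=m(j)$ so that this bond is at distance $\ge q/3$ from $\nu_{\omega,j}$. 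Then $\psi_{\omega,0}^{(j)}$ is exponentially small on the bond, hence an approximate eigenvector of $H_q[\omega,0]+\Gamma_\varkappa^{(m)}$ with error $\|\Gamma_\varkappa^{(m)}\psi_{\omega,0}^{(j)}\|\lesssim e^{-(\gamma(E_{\omega,0}^{(j)})-\varepsilon)q/3}$, and because $4\varepsilon<\min_\R\gamma$ this error is $o(e^{-\varepsilon q/10})$, i.e.\ negligible against the spectral gap of the previous paragraph. The standard resolvent/spectral‑projection estimate then yields $\|D_m^{*}\psi_{\omega,\varkappa}^{(j)}-e^{i\theta}\psi_{\omega,0}^{(j)}\|\to0$ as $q\to\infty$, uniformly in $\varkappa$; in particular $\sum_x|\psi_{\omega,\varkappa}^{(j)}(x)|\,|\psi_{\omega,0}^{(j)}(x)|\ge|\langle D_m^{*}\psi_{\omega,\varkappa}^{(j)},\psi_{\omega,0}^{(j)}\rangle|>\tfrac78$ for all large $q$.

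To finish: two unit vectors that are exponentially localised, with rate bounded below by $\min_\R\gamma-\varepsilon>0$, in arcs of size $O(n)$ and whose moduli have overlap $\sum_x|\phi_1(x)||\phi_2(x)|>\tfrac78$ must have centres within $C_2 n$ of one another (a one‑line summation using that $t\mapsto t e^{-\kappa t}$ is eventually decreasing, so no factor of $q$ enters); applying this to $|\psi_{\omega,\varkappa}^{(j)}|$ and $|\psi_{\omega,0}^{(j)}|$ gives $\|\mu_{\omega,\varkappa}^{(j)}-\nu_{\omega,j}\|_q\le C_2 n$. Transporting the decay of $\psi_{\omega,\varkappa}^{(j)}$ from $\mu_{\omega,\varkappa}^{(j)}$ to $\nu_{\omega,j}$ — the $O(n)$ shift costs a factor $e^{O(n)}$, absorbed by enlarging the threshold to $Cn$ with $C=C(\varepsilon)$, and $\gamma(E_{\omega,\varkappa}^{(j)})$ is replaced by $\gamma(E_{\omega,0}^{(j)})$ at the cost of one more $\varepsilon$ — produces the stated bound for every $\varkappa$, hence for $\max_{\varkappa}$. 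The part I expect to be most delicate is not any individual inequality but making all the losses fit inside the budget of the statement simultaneously: the $O(n)$ centre shift, the $e^{O(n)}$ prefactor, the gap‑versus‑error comparison, the overlap estimate, and the continuity modulus of $\gamma$ must all be accommodated within ``loss $3\varepsilon$, threshold $Cn$'', and it is this accounting that is responsible for the condition $4\varepsilon<\min_\R\gamma$. Structurally, the one idea without which the argument stalls is gauging the $\varkappa$‑dependence onto a bond of our own choosing: with the bond fixed, the case where $\nu_{\omega,j}$ lies near it is genuinely troublesome, because the Green functions and resonant sets governing the localisation of $\psi_{\omega,\varkappa}^{(j)}$ are evaluated at the $\varkappa$‑dependent energy $E_{\omega,\varkappa}^{(j)}$ and one cannot cheaply transport the non‑resonance information available at $E_{\omega,0}^{(j)}$.
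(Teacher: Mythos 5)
Your proposal is correct and its engine is the same as the paper's: gauge the Floquet phase onto a single bond placed far from the localisation centre of $\psi_{\omega,0}^{(j)}$ (the paper does this with $H_{q}^{(l)}[\omega,\varkappa]=D_{\varkappa}\mathcal{L}^{l}H_{q}[\omega,\varkappa]\mathcal{L}^{-l}D_{\varkappa}^{-1}$, you with a diagonal gauge $D_m$ — the same device in different coordinates), note that $\psi_{\omega,0}^{(j)}$ is then an exponentially good approximate eigenvector of $H_{q}[\omega,\varkappa]$, and use $Q_{\mathrm{Sep}}(\frac{\varepsilon}{10},q)$ together with Theorem \ref{cl:9} (to control how far eigenvalues move with $\varkappa$) to get a spectral gap and hence $\ell^2$-closeness of $\psi_{\omega,\varkappa}^{(j)}$ to $\psi_{\omega,0}^{(j)}$ up to phase. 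Where you diverge is the endgame: the paper finishes in one line, $|\psi_{\omega,\varkappa}^{(j)}(x)|\le\|\psi_{\omega,\varkappa}^{(j)}-\psi_{\omega,0}^{(j)}\|+|\psi_{\omega,0}^{(j)}(x)|$, exploiting that with the bond placed essentially antipodally the $\ell^2$ error is $\lesssim e^{-(\gamma(E_{\omega,0}^{(j)})-\frac{14}{5}\varepsilon)q/2}$, which beats the target $e^{-(\gamma-3\varepsilon)\|x-\nu\|_q}$ for every $x$ since $\|x-\nu\|_q\le q/2$ — so the paper never needs the localisation of $\psi_{\omega,\varkappa}^{(j)}$ about its own centre. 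You instead re-invoke Lemma \ref{distancedecay} at each $\varkappa$, use the modulus overlap $>\frac78$ to force $\|\mu_{\omega,\varkappa}^{(j)}-\nu_{\omega,j}\|_q\le C_2 n$, and recenter; this is more roundabout but internally consistent, and it is in fact what permits your weaker bond placement at distance $q/3$ (with that placement the direct pointwise triangle-inequality finish would not beat the target near $\|x-\nu\|_q\approx q/2$ when $\gamma\gg\varepsilon$). The costs of your route are minor bookkeeping items you should fix in a write-up: Lemma \ref{distancedecay} gives rate $\gamma(E_{\omega,\varkappa}^{(j)})-2\varepsilon$, not $-\varepsilon$ (and you cannot simply rerun it at $\varepsilon/2$, since $Q_{\mathrm{NR}}(\varepsilon/2,n,q)\subseteq Q_{\mathrm{NR}}(\varepsilon,n,q)$ is not given), though the $3\varepsilon$ budget still closes; replacing $\gamma(E_{\omega,\varkappa}^{(j)})$ by $\gamma(E_{\omega,0}^{(j)})$ uses continuity of $\gamma$, which is not listed in the theorem's hypotheses but is anyway required by Theorem \ref{cl:9} on which both proofs rely; and the bond placement at distance $\ge q/3>Cn$ (like the paper's choice of $l$) silently assumes $Cn\lesssim q$, which is harmless because otherwise the conclusion is vacuous after enlarging the final constant $C(\varepsilon)$.
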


A lower bound on the spacings between eigenvalues was established by Bourgain \cite{BourgainSpacings} for the case of the Bernoulli shift with Dirichlet boundary conditions. We build on the methods in \cite{BourgainSpacings} to show the following for bounded i.i.d potentials (I) with periodic boundary conditions. 

\begin{prop}\label{eigensep}
Let $H_{\omega}$ be a Schr\"odinger operator with bounded i.i.d potential \emph{(I)}. There exists $C,c,\varepsilon_{0}$ such that if  $Cq^{-1}<\varepsilon<\varepsilon_{0}$ then $\PR( Q_{\emph{Sep}}(\varepsilon,q))>1-q^{2}e^{-c\varepsilon q}$.
\end{prop}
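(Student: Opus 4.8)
The plan is to adapt Bourgain's argument from \cite{BourgainSpacings} for the Dirichlet case to the circle $\Z_q$ with periodic boundary conditions. Fix two distinct indices and suppose the operator $H_q[\omega,0]$ has two eigenvalues within $\varepsilon$ of each other; I want to show this event has probability at most $q^2 e^{-c\varepsilon q}$, and then union-bound over the $O(q^2)$ pairs of indices. The first step is to fix a realisation of the potential away from a single site, say at position $x_0$, and treat $V_\omega(x_0) = t$ as a free real parameter. By first-order perturbation theory, as $t$ varies an eigenvalue $E^{(j)}(t)$ moves with derivative $\tfrac{d}{dt}E^{(j)}(t) = |\psi^{(j)}(x_0)|^2 \geq 0$, where $\psi^{(j)}$ is the normalised eigenvector. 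Hence the difference $E^{(j)}(t) - E^{(k)}(t)$ of two eigenvalues has derivative $|\psi^{(j)}(x_0)|^2 - |\psi^{(k)}(x_0)|^2$, and the idea is that for a well-chosen site $x_0$ these two quantities differ appreciably, so that the gap function is monotone-like in $t$ with a quantitative lower bound on its speed; the set of $t$ for which $|E^{(j)}(t)-E^{(k)}(t)| < \varepsilon$ then has Lebesgue measure $O(\varepsilon/\delta)$ where $\delta$ is that lower bound, and since $V_\omega(x_0)$ has a bounded density (here one uses the i.i.d.\ structure; strictly, for a general bounded measure one should either assume a density or work with the Bernoulli case as in \cite{BourgainSpacings}) this translates into a probability bound after integrating out the remaining coordinates.

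The key step, and the main obstacle, is producing the site $x_0$ at which the eigenvector mass gap $\big||\psi^{(j)}(x_0)|^2 - |\psi^{(k)}(x_0)|^2\big|$ is bounded below by something like $e^{-c\varepsilon q}$. This is where the localisation input enters: by Lemma \ref{distancedecay} (localisation at $\varkappa = 0$) and the resonance assumption, each eigenvector $\psi^{(j)}$ is exponentially concentrated near a localisation centre $\nu_j$, and two eigenvectors with distinct localisation centres are essentially disjointly supported, so one simply picks $x_0$ near $\nu_j$ where $|\psi^{(j)}(x_0)|^2$ is of order $e^{-c n}$ while $|\psi^{(k)}(x_0)|^2$ is exponentially smaller. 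The delicate case is when $\nu_j$ and $\nu_k$ are close; there one must exploit eigenvector orthogonality together with the resolvent expansion to still extract a site where the two mass profiles are quantitatively separated, and this is precisely the combinatorial heart of Bourgain's argument that has to be transcribed to the periodic setting. One must be careful that the exponential concentration estimates are uniform over the energy window $K$, which is why the continuity of $\gamma$ and the sets $Q_{\text{NR}}(\varepsilon,n,q)$ are invoked; one chooses $n \asymp \varepsilon q$ so that $e^{-cn}$ and $e^{-c\varepsilon q}$ are comparable.

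Finally I would assemble the pieces: conditioning on all coordinates except $x_0$, the above shows $\PR\big(|E^{(j)} - E^{(k)}| < \varepsilon \,\big|\, (V_\omega(y))_{y\neq x_0}\big) \lesssim \varepsilon e^{c\varepsilon q}$, and a more careful bookkeeping (choosing $x_0$ depending on the conditioned configuration, and absorbing the low-probability bad events where localisation fails into the $q^2 e^{-c\varepsilon q}$ term via Proposition \ref{c0}) gives the per-pair bound $q\, e^{-c\varepsilon q}$ say; summing over the at most $q^2$ pairs $(j,k)$ and adjusting constants yields $\PR(Q_{\text{Sep}}(\varepsilon,q)^{\mathsf c}) \leq q^2 e^{-c\varepsilon q}$, valid once $\varepsilon > Cq^{-1}$ (so that $e^{-c\varepsilon q}$ beats the polynomial factors) and $\varepsilon < \varepsilon_0$ (so the localisation regime and the choice $n \asymp \varepsilon q \geq N_1$ are available). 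The restriction $Cq^{-1} < \varepsilon < \varepsilon_0$ in the statement is exactly what makes both ends of this argument go through.
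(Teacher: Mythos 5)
Your plan rests on spectral averaging in a single site: treating $V_\omega(x_0)=t$ as a free parameter, using $\frac{d}{dt}E^{(j)}(t)=|\psi^{(j)}(x_0)|^2$, and converting "the gap function moves with speed $\geq\delta$" into a probability bound because "$V_\omega(x_0)$ has a bounded density". That last step is exactly what is not available here: the hypothesis (I) is an arbitrary bounded Borel measure $\mu$, which may be purely atomic (Bernoulli) or otherwise singular, and the entire point of Bourgain's paper \cite{BourgainSpacings} — and of the adaptation needed for Proposition \ref{eigensep} — is to prove eigenvalue separation \emph{without} any single-site regularity. You flag this yourself, but flagging it does not close it, and restricting to measures with a density would prove a different (weaker) proposition. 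The paper's proof replaces the density of $\mu$ by H\"older continuity of the integrated density of states (Le Page \cite{LePage2}), packaged as Lemma \ref{holderexponent}: conditioning on the potential inside one arc $\Lambda$ fixes an eigenvalue $E_{i,\Lambda}$, and the probability that the \emph{independent} potential on a disjoint arc $\Lambda'$ supports a unit vector $\psi$ with $\|(H_q[\omega,0]\upharpoonright\Lambda'-E_{i,\Lambda})\psi\|$ and the boundary values both $<\rho$ is at most $MD\rho^{\alpha}$, by a Fubini argument over the two blocks. No perturbation in $t$ is ever performed.

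The second, equally serious gap is that you defer the "delicate case" of nearby localisation centres to "the combinatorial heart of Bourgain's argument that has to be transcribed", i.e. it is not proved. In the paper this is Lemma \ref{vectorsep}: a Wronskian/transfer-matrix argument showing that on $Q_{\text{NR}}(\varepsilon_1,n,q)$, two eigenvalues of $H_q[\omega,0]$ with $|E-E'|<e^{-\beta n}$ must have localisation centres at distance $>\eta\log\frac{1}{|E-E'|}$ (unless they are already at distance $\geq q/A_1$). Since $Q_{\text{Sep}}(\varepsilon,q)$ concerns separation at scale $e^{-\varepsilon q}$ (note: not $\varepsilon$, as your opening sentence and the intermediate bound $\varepsilon e^{c\varepsilon q}>1$ suggest — the scales need care), choosing $n\asymp\varepsilon q$ forces the centres of any offending pair to be farther apart than $2M$ with $M\asymp n$; this is what legitimises restricting the two eigenvectors to \emph{disjoint} arcs and invoking independence plus the IDS estimate above, and then a union bound over the $\sim q^2$ pairs of arcs and the $O(M)$ eigenvalues of the restriction, together with Proposition \ref{c0} to discard $Q_{\text{NR}}^{\mathsf c}$, yields $q^2e^{-c\varepsilon q}$. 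Without a proof of this repulsion statement (or a genuine substitute for it) and without an argument that survives singular $\mu$, the proposal does not establish the proposition.
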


\begin{rmk}
The argument in \cite{BourgainSpacings} and the one presented here require the use of transfer matrices and are  therefore limited to the one-dimensional case. If the cumulative distribution function corresponding to the distribution $\mu$ is uniformly H\"older of order $\frac{1}{2}+\delta$ then a conclusion similar to that of Proposition \ref{eigensep}, for arbitrary dimensions, also follows from the Minami estimate \cite{Minami}. 
\end{rmk}

\begin{rmk}
We rely on  the positivity of the Lyapunov exponent. In the i.i.d case (I),  Furstenberg’s theorem \cite{Furstenberg} implies that $\gamma>0$. 
\end{rmk}

\emph{Structure}. We start the first section with a quick reminder of Floquet theory in order to establish the lower bounds on the bandwidths. Afterwards, in the second section, we touch on the Anderson localisation of the eigenvectors of the operator \eqref{floquetmatrix}, in which we permit the localisation centres to depend on the Floquet number and show that this is enough to obtain the upper bound on the bandwidths. In the third section we show that given the additional assumption of eigenvalue separation, the localisation centres do not depend on the Floquet number. In the remaining sections we will show that the assumptions in Theorems \ref{cl:9} and \ref{uniformlemma} do indeed hold in the i.i.d.\ setting, with large probability. 
 
From here onwards we will work on $K=[-\sup_{(\omega,q)}\|H_{\omega,q}\|-10 ,\sup_{(\omega,q)}\|H_{\omega,q}\| +10]$ so almost surely the spectrum $\sigma(H_{\omega,q})$ does not intersect with $K^{\mathsf{c}}$. We drop the dependence of any quantities on $K$ and   denote $\min\gamma = \min_{\R}\gamma$.

\section*{Acknowledgements}
The author is grateful to his advisor Mira Shamis for proposing this work and to Sasha Sodin for his extensive support towards its completion. This work was supported by the Engineering and Physical Science Research Council.

\section{Lower bounds.}\label{sec:1}
In this section we prove Theorem \ref{ergodiccontinuous}. Our argument requires a uniform (in the energy) upper bound on the gradient of the characteristic polynomial corresponding to the matrix representation of the operator \eqref{floquetmatrix}. Let us first recall a few facts from Floquet theory. The operator \eqref{floquetmatrix} is represented in the standard basis, by the so-called Floquet matrix: 
\begin{equation}
A_{\omega,q}(\varkappa) = 
\begin{pmatrix}
           V_{\omega,q}(\overline{0}_{q}) & e^{-i\varkappa}         &          &           &     e^{i\varkappa} \\
           e^{i\varkappa}    & V_{\omega,q}(\overline{1}_{q})& e^{-i\varkappa}        &           &      \\
                &  e^{i\varkappa}        & \ddots   & \ddots   &       \\
                &           &  \ddots  &\ddots      &    e^{-i\varkappa}  \\
            e^{-i\varkappa}   &           &          &    e^{i\varkappa}       &  V_{\omega,q}(\overline{q-1}_{q})
\end{pmatrix}.
\end{equation}
The conjugation $D_{\varkappa}A_{\omega,q}(\varkappa)D_{\varkappa}^{-1}$ by  $D_{\varkappa}= \text{Diag}(1,\dots,e^{-i\varkappa (q-1)})$, allows us to compute the dependence on $\varkappa\in\R/\frac{2\pi}{q}\Z$, of its characteristic polynomial, which is given by 
\begin{equation}\label{charpoly}
    \det(A_{\omega,q}(\varkappa)-E) = \Delta_{\omega,q}(E)+2(-1)^{q-1}\cos(q\varkappa)
\end{equation}
where the discriminant $\Delta_{\omega,q}(E)$ is a polynomial of degree $q$ with real coefficients. 

The set $\Delta_{\omega,q}^{-1}([-2,2])$ is made up of $q$ closed intervals $B_{\omega,q}^{(j)}=\cup_{\varkappa}\big\{E_{\omega,\varkappa}^{(j)}\big\}$ which may intersect, but only at the edges.  The spectrum of the operator $H_{\omega,q}$ is given by the union of the roots of the characteristic polynomial \eqref{charpoly} over $\varkappa\in \R/\frac{2\pi}{q}\Z$, so too we have $\sigma(H_{\omega,q})=\Delta_{\omega,q}^{-1}([-2,2])$. Moreover, 
\begin{equation}\label{bandintegral}
4 = \int_{B_{\omega,q}^{(j)}}  |\Delta'_{\omega,q}(E)| \,dE
\leq m(B_{\omega,q}^{(j)}) \max_{E\in B_{\omega,q}^{(j)}} |\Delta'_{\omega,q}(E)|
\end{equation}
whence Theorem \ref{ergodiccontinuous} follows provided we bound the gradient of the discriminant uniformly on the bands, thereby motivating Lemma \ref{thm1}, below. 

The following well-known lemma (see e.g.\ Deift-Simon \cite{DeiftSimon}) allows us to deduce that for a large enough period, the Lyapunov exponent only varies very little on the bands due to our assumption of continuity of the Lyapunov exponent. 

\begin{lemma}\label{smallbands}
The largest bandwidth of discrete $q$-periodic Schr\"odinger operators is at most $ \frac{2\pi}{q}$.
\end{lemma}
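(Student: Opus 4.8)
The plan is to realise each band $B_{\omega,q}^{(j)}$ as the image of a short interval of Floquet numbers under a $2$-Lipschitz function, and then read off the bound. Fix $\omega$ and $q$, and recall that $B_{\omega,q}^{(j)}=\bigcup_{\varkappa}\{E_{\omega,\varkappa}^{(j)}\}$, where $E_{\omega,\varkappa}^{(1)}\le\cdots\le E_{\omega,\varkappa}^{(q)}$ denote the eigenvalues of the Hermitian matrix $A_{\omega,q}(\varkappa)$ representing $H_{q}[\omega,\varkappa]$, listed in increasing order.

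First I would use the formula \eqref{charpoly} for the characteristic polynomial: the multiset of roots of $E\mapsto\det(A_{\omega,q}(\varkappa)-E)$, and hence each ordered eigenvalue $E_{\omega,\varkappa}^{(j)}$, depends on $\varkappa$ only through $\cos(q\varkappa)$. Since $\varkappa\mapsto\cos(q\varkappa)$ already maps $[0,\tfrac{\pi}{q}]$ onto $[-1,1]$, this gives
\[
B_{\omega,q}^{(j)}=\big\{E_{\omega,\varkappa}^{(j)}:\varkappa\in[0,\tfrac{\pi}{q}]\big\},
\]
that is, the entire band is already swept out as $\varkappa$ runs over an interval of length only $\tfrac{\pi}{q}$.

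Next I would control the $\varkappa$-dependence of the operator. Writing the unitary shift $(S\psi)(x)=\psi(x-\overline{1}_{q})$ on $\C^{\Z_{q}}$, one has $H_{q}[\omega,\varkappa]=e^{i\varkappa}S+e^{-i\varkappa}S^{*}+V_{\omega,q}$, whence for any $\varkappa_{1},\varkappa_{2}$,
\[
\big\|H_{q}[\omega,\varkappa_{1}]-H_{q}[\omega,\varkappa_{2}]\big\|\le 2\big|e^{i\varkappa_{1}}-e^{i\varkappa_{2}}\big|\le 2|\varkappa_{1}-\varkappa_{2}|.
\]
By the standard perturbation bound for eigenvalues of Hermitian matrices, $|E_{\omega,\varkappa_{1}}^{(j)}-E_{\omega,\varkappa_{2}}^{(j)}|\le\|H_{q}[\omega,\varkappa_{1}]-H_{q}[\omega,\varkappa_{2}]\|\le 2|\varkappa_{1}-\varkappa_{2}|$. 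In particular $\varkappa\mapsto E_{\omega,\varkappa}^{(j)}$ is continuous on $[0,\tfrac{\pi}{q}]$, so its image is a closed interval, and combining this with the previous step,
\[
m\big(B_{\omega,q}^{(j)}\big)=\max_{\varkappa\in[0,\pi/q]}E_{\omega,\varkappa}^{(j)}-\min_{\varkappa\in[0,\pi/q]}E_{\omega,\varkappa}^{(j)}\le 2\cdot\frac{\pi}{q}=\frac{2\pi}{q}.
\]

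The argument is short and I do not anticipate a genuine obstacle; the two points that deserve care are the reduction of the range of $\varkappa$ from $\R/\tfrac{2\pi}{q}\Z$ to $[0,\tfrac{\pi}{q}]$ in the first step — i.e.\ the observation that the \emph{ordered} band functions, not merely the eigenvalue set, depend on $\varkappa$ only through $\cos(q\varkappa)$ — and the use of the sharp Lipschitz constant $2$ (rather than a crude bound) for $\varkappa\mapsto H_{q}[\omega,\varkappa]$, which is precisely what produces the constant $2\pi$ in the statement.
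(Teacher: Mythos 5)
Your argument is correct. The paper proves the same two facts you use, but by different means: it bounds the speed of the eigenvalue branches via the Feynman--Hellmann formula, $\big|\frac{dE_{\varkappa}}{d\varkappa}\big|=\big|\langle \frac{dH_{q}[\varkappa]}{d\varkappa}\psi_{\varkappa},\psi_{\varkappa}\rangle\big|\le 2$, and then writes $m(B)=\int_{0}^{\pi/q}\big|\frac{dE_{\varkappa}}{d\varkappa}\big|\,d\varkappa$, using that the branch is monotone on $[0,\frac{\pi}{q}]$ (the band function depends on $\varkappa$ only through the discriminant equation \eqref{charpoly}). You instead get the factor $2$ from the operator-norm Lipschitz bound $\|H_{q}[\omega,\varkappa_{1}]-H_{q}[\omega,\varkappa_{2}]\|\le 2|\varkappa_{1}-\varkappa_{2}|$ combined with Weyl's perturbation inequality for ordered eigenvalues, and you reduce to $\varkappa\in[0,\frac{\pi}{q}]$ by observing from \eqref{charpoly} that the ordered eigenvalues depend on $\varkappa$ only through $\cos(q\varkappa)$, which already covers $[-1,1]$ on that interval. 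Your route is more elementary and slightly more robust: it needs neither differentiability of the eigenvalue branches nor monotonicity on $[0,\frac{\pi}{q}]$, since a Lipschitz image of an interval of length $\frac{\pi}{q}$ has diameter at most $\frac{2\pi}{q}$ regardless. What the paper's route buys is the Feynman--Hellmann identity \eqref{eq:201} itself, which is reused later (in the proof of Theorem \ref{cl:9}) with localised eigenvectors to make $\big|\frac{dE_{\varkappa}}{d\varkappa}\big|$ exponentially small; your Weyl-based bound proves the present lemma but does not supply that formula.
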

\begin{proof}
Let $H_{q}[\varkappa]$ be defined as in \eqref{floquetmatrix}.  Let $H_{q}[\varkappa]\psi_{\varkappa}=E_{\varkappa}\psi_{\varkappa}$, $B=\cup_{\varkappa}E_{\varkappa}$ and $\|\psi_{\varkappa}\|=1$. Since $H_{q}[\varkappa]$ is self-adjoint, Feynman-Hellmann and Cauchy-Schwarz implies 
\begin{equation}\label{eq:201}
\left|\frac{dE_{\varkappa}}{d\varkappa}\right| = \left|\left\langle \frac{dH_{q}[\varkappa]}{d\varkappa} \psi_{\varkappa},\psi_{\varkappa} \right\rangle \right| \leq \sum_{x\in\Z_{q}} (|\psi_{\varkappa}(x-\overline{1}_{q})|+|\psi_{\varkappa}(x+\overline{1}_{q})|)|\psi_{\varkappa}(x)|\leq 2
\end{equation}
and by monotonicity of the derivative $\frac{dE_{\varkappa}}{d\varkappa}$ the characteristic polynomial \eqref{charpoly} of the Floquet matrix implies 
$m(B) =  \int_{0}^{\frac{\pi}{q}} \left|\frac{dE_{\varkappa}}{d\varkappa}\right| \,d\varkappa\leq \frac{2\pi}{q}$. 
\end{proof}

\begin{rmk} The constant $2\pi$ in Lemma \ref{smallbands} is actually optimal. The middle band in the spectrum of the  discrete free Laplacian, seen as a periodic operator of period $q$, has bandwidth $\sim\frac{2\pi}{q}\,(q\rightarrow\infty)$. Shamis-Sodin \cite[Theorem 1.2]{ShamisSodin} shows that the $j$-th band from the left of a periodic operator is at  most as big as the $j$-th band from the left of the discrete free Laplacian. 
\end{rmk}

In the general ergodic setting: let $\Delta_{\omega,q}$ denote the discriminant corresponding to the periodic operator $H_{\omega,q}$. In the quasi-periodic setting (II): let  $\alpha_{n}=\frac{p_{n}}{q_{n}}\rightarrow\alpha\in\R\setminus\Q$  be a sequence of rationals and let $\Delta_{\omega,q,\alpha_{n}}$ denote the discriminant corresponding to the $q$-periodic operator $H^{(\alpha_{n})}_{\omega,q} = \Delta+V_{\omega,q}^{(\alpha_{n})}$ where   $V_{\omega,q}^{(\alpha_{n})}(x) = f(T_{\alpha_{n}}^{x\text{ mod } q} \omega)$ and $T_{\alpha_{n}}\omega = \omega + \alpha_{n}$.

\begin{lemma}\label{thm1}
If $H_{\omega}$ is an ergodic Schr\"odinger operator \eqref{ergodicop} with continuous Lyapunov exponent $\gamma$, then with full probability 
$$
\limsup_{q\rightarrow \infty} \sup_{E\in K} (q^{-1}\log |\Delta_{\omega,q}'(E)| - \gamma(E) ) \leq 0. 
$$
If $T$ is a uniquely ergodic transformation then   
$$
\limsup_{q\rightarrow \infty}\sup_{(\omega,E)\in  \Omega\times K} (q^{-1}\log |\Delta_{\omega,q}'(E)| - \gamma(E) ) \leq 0.
$$
 In the quasi-periodic case \emph{(II)}, if $\alpha_{n}=\frac{p_{n}}{q_{n}}\rightarrow \alpha\in\R\setminus\Q$ is a sequence of rationals then, 
$$
\limsup_{n\rightarrow\infty} \sup_{(\omega,E)\in  \Omega\times K}(q_{n}^{-1}\log|\Delta'_{\omega,q_{n},\alpha_{n}}(E)|-\gamma(E)) \leq 0.
$$
\end{lemma}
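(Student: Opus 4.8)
\emph{Overview.} The plan is to first reduce the bound on $|\Delta'_{\omega,q}(E)|$ to a bound on the norms of partial transfer matrices, and then to establish that bound uniformly in the energy \emph{and} in the starting site by combining an effective (uniform‑in‑$E$) Fekete inequality at a fixed block scale with Birkhoff's ergodic theorem. By standard Floquet theory $\Delta_{\omega,q}(E)=\pm\operatorname{tr}\Phi_{\omega,q}(E)$ (the sign is immaterial here). Differentiating the product that defines $\Phi_{\omega,q}(E)$ in $E$ — the $E$‑derivative of each factor is a fixed matrix of operator norm $1$ — and using the triangle inequality I would get
$$
|\Delta'_{\omega,q}(E)|\ \le\ 2\sum_{k=0}^{q-1}\big\|\Phi_{T^{k+1}\omega,\,q-1-k}(E)\big\|\,\big\|\Phi_{\omega,k}(E)\big\| ,
$$
hence $q^{-1}\log|\Delta'_{\omega,q}(E)|\le q^{-1}\log(2q)+q^{-1}\max_{0\le k\le q-1}\big(\log\|\Phi_{T^{k+1}\omega,q-1-k}(E)\|+\log\|\Phi_{\omega,k}(E)\|\big)$. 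Thus it suffices to prove the following uniform bound, call it $(\star)$: for every $\varepsilon>0$, with full probability there is $q_{0}$ such that for all $q\ge q_{0}$, all $t,s\ge0$ with $t+s\le q$, and all $E\in K$,
$$
\log\big\|\Phi_{T^{t}\omega,s}(E)\big\|\ \le\ s\,\gamma(E)+\varepsilon q .
$$
Indeed, granting $(\star)$ and using $\gamma\ge 0$ and $\|\gamma\|_{\infty}<\infty$, the maximum above is $\le q\gamma(E)+3\varepsilon q$ for $q$ large, so $\limsup_{q}\sup_{E\in K}\big(q^{-1}\log|\Delta'_{\omega,q}(E)|-\gamma(E)\big)\le 3\varepsilon$; intersecting the full‑measure sets over a sequence $\varepsilon\downarrow0$ gives the first assertion.

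\emph{Proof of $(\star)$.} Since $n\mapsto\E\log\|\Phi_{\cdot,n}(E)\|$ is subadditive (by $T$‑invariance of $\PR$) and nonnegative (every $\Phi$ has determinant $1$, hence norm $\ge1$), Fekete's lemma gives $\gamma(E)=\inf_{n}n^{-1}\E\log\|\Phi_{\cdot,n}(E)\|$. Each $E\mapsto n^{-1}\E\log\|\Phi_{\cdot,n}(E)\|$ is continuous on $K$, and $\gamma$ is continuous on $K$ by hypothesis; a compactness argument in the spirit of Dini's theorem (cover $K$ by neighbourhoods on which some finite scale undercuts $\gamma+\tfrac{\delta}{2}$, then pass to a large common multiple of these scales, again controlling the remainder by $T$‑invariance) then yields, for each $\delta>0$, a scale $L=L(\delta)$ with $\ell^{-1}\E\log\|\Phi_{\cdot,\ell}(E)\|\le\gamma(E)+\delta$ for all $\ell\ge L$ and all $E\in K$. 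Now fix $\delta$ a small multiple of $\varepsilon$, set $\ell:=L(\delta)$, let $C_{K}:=\sup_{\omega,\,E\in K}\|\Phi_{\omega,1}(E)\|<\infty$, and fix a finite net of $K$ fine enough — at the \emph{fixed} scale $\ell$ the family $\{\log\|\Phi_{\omega,\ell}(\cdot)\|\}_{\omega}$ is equicontinuous, and $\gamma$ is uniformly continuous — that replacing $E$ by its nearest net point $E_{l(E)}$ moves $\log\|\Phi_{\omega,\ell}(E)\|$ and $\gamma(E)$ by at most $\delta$. If $s\le\theta q$ with $\theta:=\varepsilon/\log C_{K}$ then $\log\|\Phi_{T^{t}\omega,s}(E)\|\le s\log C_{K}\le\varepsilon q$; so assume $s\ge\theta q$. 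Splitting $[t,t+s)$ into $a=\lfloor s/\ell\rfloor$ consecutive blocks of length $\ell$ (plus short end pieces) gives, for \emph{every} offset $0\le p<\ell$,
$$
\log\big\|\Phi_{T^{t}\omega,s}(E)\big\|\ \le\ 2\ell\log C_{K}+\sum_{r=0}^{a-1}\log\big\|\Phi_{T^{\,t+p+r\ell}\omega,\,\ell}(E)\big\| .
$$
Averaging this over $p=0,\dots,\ell-1$ turns the arithmetic‑progression sum into a genuine Birkhoff sum for $T$: writing $S_{n}(E):=\sum_{i=0}^{n-1}\log\|\Phi_{T^{i}\omega,\ell}(E)\|$, the total over $p$ and $r$ equals $S_{t+a\ell}(E)-S_{t}(E)$, so after passing to net points and using $\log\|\Phi_{\cdot,\ell}\|\ge0$,
$$
\log\big\|\Phi_{T^{t}\omega,s}(E)\big\|\ \le\ 2\ell\log C_{K}+a\delta+\tfrac1\ell\big(S_{t+a\ell}(E_{l(E)})-S_{t}(E_{l(E)})\big) .
$$
Applying Birkhoff's theorem to the finitely many functions $\omega\mapsto\log\|\Phi_{\omega,\ell}(E_{j})\|$ yields $N_{0}=N_{0}(\omega,\delta)$ with $|n^{-1}S_{n}(E_{j})-\E\log\|\Phi_{\cdot,\ell}(E_{j})\||<\delta_{0}$ for all $n\ge N_{0}$ and all $j$, where $\delta_{0}$ is chosen small in terms of $\delta,\theta,\ell$. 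Since $a\ell\ge s-\ell\ge\theta q-\ell\ge N_{0}$ for $q$ large, treating the cases $t<N_{0}$ and $t\ge N_{0}$ separately and using $t+a\ell\le q$ bounds $S_{t+a\ell}(E_{l(E)})-S_{t}(E_{l(E)})$ by $a\ell\,\E\log\|\Phi_{\cdot,\ell}(E_{l(E)})\|+3\delta_{0}q+(\text{a quantity bounded in }q)$. Dividing by $s$ and using $a\ell\le s$, $q/s\le1/\theta$, and $\ell^{-1}\E\log\|\Phi_{\cdot,\ell}(E_{l(E)})\|\le\gamma(E_{l(E)})+\delta\le\gamma(E)+2\delta$, one finds every error term is $\le C\delta\,s/q\cdot q\le C\delta q$ for $q$ large; taking $\delta$ a small enough multiple of $\varepsilon$ gives $(\star)$, and intersecting the full‑measure sets over $\varepsilon\downarrow 0$ completes the ergodic case.

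\emph{The two variants.} If $T$ is uniquely ergodic and $f$ continuous (the setting of the operators (II)), I would only replace Birkhoff's theorem in the last step by the uniform ergodic theorem for continuous observables, so that $N_{0}$ may be taken independent of $\omega$; this makes $(\star)$, hence the conclusion, uniform in $\omega$. For the approximations $\alpha_{n}=p_{n}/q_{n}\to\alpha$ I would work with the rotations $T_{\alpha_{n}}$ and the $\ell$‑step transfer matrices $\Phi^{(\alpha_{n})}_{\omega,\ell}(E)$ of $H^{(\alpha_{n})}_{\omega}$: the $q_{n}$‑orbit of any $\omega$ under $T_{\alpha_{n}}$ is the coset $\omega+q_{n}^{-1}\Z\subset\R/\Z$, so sums over the orbit are Riemann sums. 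First fix a scale $\ell$ with $\ell^{-1}\int_{\R/\Z}\log\|\Phi^{(\alpha)}_{\omega,\ell}(E)\|\,dm(\omega)\le\gamma(E)+\tfrac{\delta}{2}$ uniformly on $K$ (Fekete for the irrational rotation). Because $\ell$ is then frozen and $\Phi^{(\alpha_{n})}_{\omega,\ell}(E)\to\Phi^{(\alpha)}_{\omega,\ell}(E)$ uniformly in $(\omega,E)$ — a finite product of matrices depending continuously on $f$ — for $n$ large $\ell^{-1}\int\log\|\Phi^{(\alpha_{n})}_{\omega,\ell}(E)\|\,dm(\omega)\le\gamma(E)+\delta$ uniformly on $K$, and likewise the Riemann sums $q_{n}^{-1}\sum_{k}\log\|\Phi^{(\alpha_{n})}_{\omega+k/q_{n},\,\ell}(E)\|$ converge to that integral uniformly in $\omega$ and on the net, by unique ergodicity of the limiting rotation and continuity of the integrand. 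Substituting these two facts for the uniform Fekete bound and for Birkhoff's theorem, respectively, and running the block/offset argument verbatim, delivers $(\star)$ for the matrices $\Phi^{(\alpha_{n})}$ uniformly in $\omega$, hence the third assertion.

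\emph{Main obstacle.} The genuinely delicate point is uniformity \emph{simultaneously} in $E$ and in the starting site $t$. Uniformity in $E$ is inexpensive precisely because everything is carried out at a \emph{fixed} block scale $\ell$, where the relevant functions are equicontinuous, so a finite net suffices and continuity of $\gamma$ absorbs the net error. The trouble with $t$ is that it may be of order $q$ while $T^{\ell}$ need not be ergodic, so one cannot apply Birkhoff directly along the progression $t+\ell\Z$; the remedy is to average over the $\ell$ offsets, which collapses that progression to a true Birkhoff window for $T$, together with the nonnegativity of $\log\|\Phi\|$ and the elementary splitting $[t,t+s)=[0,t+s)\setminus[0,t)$ (with a case distinction on whether $t$ is small or comparable to $q$) to handle a window not anchored at the origin. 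In the quasi‑periodic case the extra wrinkle that the dynamical system itself moves with $n$ is neutralised by freezing $\ell$ before letting $n\to\infty$, so that only fixed‑size transfer matrices and their Riemann sums ever have to be compared.
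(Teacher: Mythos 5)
Your treatment of the first two assertions is correct, but it takes a genuinely different route from the paper's. For the general ergodic case the paper never touches partial transfer matrices: it bounds $\Delta_{\omega,q}$ itself via the Craig--Simon theorem (almost surely, pointwise in $E$), upgrades this to a locally uniform bound with Egoroff's theorem plus the Remez inequality (exploiting that $\Delta_{\omega,q}$ is a polynomial of degree $q$), and only then passes to $\Delta'_{\omega,q}$ by the Markov brothers' inequality and a compactness argument. You instead prove the stronger statement $(\star)$ for all partial products $\Phi_{T^{t}\omega,s}$ by a fixed-scale Fekete bound that is uniform in $E$, a finite net at the frozen scale $\ell$, the offset-averaging trick that converts skip-$\ell$ sums into genuine Birkhoff windows, and Birkhoff's theorem; then you differentiate the trace. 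This is essentially the mechanism the paper uses for its \emph{third} statement (fixed scale, subadditivity, differentiating the trace, splitting the product), transplanted to the general ergodic setting with Birkhoff supplying the input that Furman's theorem supplies there. What your route buys is a self-contained argument avoiding Craig--Simon/Egoroff/Remez, and the bookkeeping in $t$ and $s$ that this requires (offset averaging, nonnegativity of $\log\|\Phi\|$, the $t<N_{0}$ case) is handled correctly; the uniquely ergodic case via the uniform ergodic theorem (with $f$ continuous, as you note) is likewise fine.

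The third assertion, as written, has a gap. The fact you substitute for Birkhoff --- convergence of the Riemann sums $q_{n}^{-1}\sum_{k}\log\|\Phi^{(\alpha_{n})}_{\omega+k/q_{n},\ell}(E)\|$ over the full coset --- is not what the block/offset argument consumes: that argument needs one-sided control of the partial sums $\sum_{i<m}\log\|\Phi^{(\alpha_{n})}_{\omega+i\alpha_{n},\ell}(E_{j})\|$ for \emph{every} window length $m$ between some $N_{0}$ independent of $n$ and $q_{n}$, and every $\omega$. A bound on the full-orbit sum only gives $S_{t+a\ell}-S_{t}\le S_{q_{n}}\approx q_{n}\,\ell\,(\gamma(E)+\delta)$, hence $\log\|\Phi^{(\alpha_{n})}_{T^{t}\omega,s}(E)\|\lesssim q_{n}\gamma(E)$ instead of $s\gamma(E)+\varepsilon q_{n}$; feeding that into the trace expansion produces $2q_{n}\gamma(E)$ in the exponent, which is useless. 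Nor are partial windows of the $\alpha_{n}$-orbit automatically ``Riemann sums'': since $\alpha_{n}=p_{n}/q_{n}$ is only assumed to converge to $\alpha$ (no continued-fraction structure), an orbit segment of length comparable to $q_{n}$ cannot be compared pointwise with an $\alpha$-orbit segment. The gap is fillable with ingredients you already have: by unique ergodicity of the limiting rotation choose a frozen scale $m_{0}$ such that $m_{0}^{-1}\sum_{i<m_{0}}\log\|\Phi^{(\alpha)}_{\omega+i\alpha,\ell}(E_{j})\|\le \int\log\|\Phi^{(\alpha)}_{\cdot,\ell}(E_{j})\|\,dm+\delta$ for \emph{all} $\omega$, transfer this to $\alpha_{n}$ for $n$ large (the scale $m_{0}\ell$ is fixed, so the finitely many entries converge uniformly), and then bound an arbitrary window of length $m\ge N_{0}(m_{0},\delta)$ by concatenating $m_{0}$-blocks, using that the fixed-scale bound holds at every starting point. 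This is precisely the uniform-in-$\omega$ subadditivity mechanism of the paper's proof of the third statement; with that replacement your argument for the quasi-periodic approximations goes through.
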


We first complete the proof of Theorem \ref{ergodiccontinuous}. 

\begin{proof}[Proof of Theorem \ref{ergodiccontinuous}]
General ergodic case: Fix $\varepsilon>0$ and $\omega\in\Omega$ in the event of full probability in the first statement of Lemma \ref{thm1}. There exists a constant $q_{1}=q_{1}(\varepsilon,\omega)$ such that if $q>q_{1}$, then for each $j = 1,\dots,q$ there exists $\varkappa_{j}\in\R/\frac{2\pi}{q}\Z$ such that 
$$\max_{E\in B_{\omega,q}^{(j)}} |\Delta'_{\omega,q}(E)| \leq \max_{E\in B_{\omega,q}^{(j)}} e^{(\gamma(E)+\varepsilon)q} = e^{\big(\gamma\big(E_{\omega,\varkappa_{j}}^{(j)}\big)+\varepsilon\big)q}.$$  

By Lemma \ref{smallbands} we have $\max_{j} m\big(B_{\omega,q}^{(j)}\big) \leq \frac{2\pi}{q}$, so for any $\delta>0$ there exists $q_{2}=q_{2}(\delta)$ such that if $q>q_{2}$, then $\max_{j}m\big(B_{\omega,q}^{(j)}\big)<\delta$. Since the Lyapunov exponent is uniformly continuous on $K$, there exists  $\delta = \delta(\varepsilon)>0$ for which $\big|\gamma(E)-\gamma(E')\big|< \varepsilon$ for any $E,E'\in K$ with $|E-E'|<\delta$.  For $q>q_{2}$, $\big|\gamma\big(E_{\omega,\varkappa_{j}}^{(j)}\big)-\gamma\big(b_{\omega,q}^{(j)}\big)\big|\leq2\varepsilon$ and  
$e^{\big(\gamma\big(E_{\omega,\varkappa_{j}}^{(j)}\big)+\varepsilon\big)q} \leq e^{\big(\gamma\big(b_{\omega,q}^{(j)}\big)+3\varepsilon\big)q}$
for each $j=1,\dots,q$.

Let $q^{-1}\log 4  \leq 2\varepsilon$ for $q>q_{3}=q_{3}(\varepsilon)$, then \eqref{bandintegral} implies
\begin{equation}\label{eq:132}
m\big(B_{\omega,q}^{(j)}\big) \geq e^{-\big(\gamma\big(b_{\omega,q}^{(j)}\big)+\varepsilon\big)q} 
\end{equation}
for every $j=1,\dots,q$ and $q>q_{0}(\varepsilon,\omega)=\max\{q_{1}(\varepsilon,\omega),q_{2}(\delta(\varepsilon)),q_{3}(\varepsilon)\}$. 

Uniquely ergodic case: The same reasoning as above. Uniformity follows from Lemma \ref{thm1}.

Quasi-periodic case: By the third statement of Lemma \ref{thm1} there exists $N_{0}=N_{0}(\varepsilon)$ such that for $n>N_{0}$,
\begin{equation}\label{eq:124}
|\Delta'_{\omega,q_{n},\alpha_{n}}(E)| \leq e^{(\gamma(E) + \varepsilon)q_{n}}
\end{equation}
for every $(\omega,E)\in \Omega\times K $. By the same reasoning that led to \eqref{eq:132}, we can increase $N_{0}(\varepsilon)$ such that an analogous lower bound to \eqref{eq:132} for the present case, follows for $n> N_{0}$. Uniformity in $\omega\in\Omega$ follows from the fact that \eqref{eq:124} holds uniformly in $\omega$.
\end{proof}

We now prove the first and the third statements of Lemma \ref{thm1}. We will omit the proof of the second statement as it is almost identical to that of the third one.

\begin{proof}[Proof of the first statement of Lemma \ref{thm1}]
Let  $\Phi_{\omega,q}$ denote the $q$-step transfer matrix of the operator  $H_{\omega}$.  A theorem of Craig-Simon \cite{C-S} states that $\PR$-almost surely for every energy $E\in\R$ we have $$\limsup_{q\rightarrow\infty} q^{-1}\log \|\Phi_{\omega,q}(E)\| \leq \gamma(E)$$ in the case of general ergodic Schr\"odinger operators \eqref{ergodicop}. The discriminant satisfies $\Delta_{\omega,q} = \text{Tr}(\Phi_{\omega,q})$ and so 
$|\text{Tr}(\Phi_{\omega,q})| \leq 2\|\Phi_{\omega,q}\|$ implies that $\PR$-almost surely for every $E\in\R$, 
\begin{equation}\label{eq:130}
\limsup_{q\rightarrow\infty} q^{-1}\log |\Delta_{\omega,q}(E)| \leq \gamma(E).
\end{equation}

We upgrade \eqref{eq:130} to an estimate that holds locally uniformly in the energy by using the Remez inequality together with Egoroff's theorem. 

Fix $\varepsilon>0$ and take $\omega\in \Omega$ from the event of full probability which satisfies \eqref{eq:130}. We begin by showing that for any $  E_{0}\in K$, there exists $q_{1}=q_{1}(\varepsilon,\omega,  E_{0})$ and $\delta=\delta(\varepsilon)>0$ such that $
q^{-1}\log |\Delta_{\omega,q}(E)| - \gamma(E) \leq \varepsilon $
for all $E\in B_{\delta}(  E_{0})$ and $q>q_{1}$. 

Since the Lyapunov exponent $\gamma$ is uniformly continuous on $K$, let $\delta = \delta(\varepsilon)>0$ be such that $\gamma(E')-\varepsilon\leq \gamma(E)$ for every $E\in B_{\delta}(E')$ and every $E'\in K$. Fix $  E_{0}\in K$, by Egoroff's theorem there exists a subset $B\subset B_{\delta}(  E_{0})$ with  $m(B_{\delta}(  E_{0})\setminus B)=\widetilde{\delta}$ where $\widetilde{\delta}<\frac{2\delta\varepsilon^{2}}{1+\varepsilon^{2}}$, on which the limit \eqref{eq:130} is uniform on $E\in B$. 

We show
\begin{equation}\label{eq:82}
\sup_{E\in B_{\delta}(  E_{0})}|\Delta_{\omega,q}(E)| \leq  e^{2q\left(\frac{\widetilde\delta}{m(B)}\right)^{\frac{1}{2}}} \sup_{E\in B}|\Delta_{\omega,q}(E)|.
\end{equation}
Indeed, the Remez inequality states 
\begin{equation}\label{eq:81}
\sup_{E\in B_{\delta}(  E_{0})}|\Delta_{\omega,q}(E)| \leq  T_{q}(1+\rho) \sup_{E\in  B}|\Delta_{\omega,q}(E)|
\end{equation}
where $\rho=\frac{2\widetilde{\delta}}{m(B)}>0$ and $T_{q}$ is the first kind Chebyshev polynomial of degree $q$ which satisfies $T_{q}(1+\rho)=\cosh(q\arcosh(1+\rho))$.

We claim that $T_{q}(1+\rho)\leq e^{q\sqrt{2\rho}}$, which together with \eqref{eq:81} proves \eqref{eq:82}. Indeed, let $t>0$ be such that $\arcosh(1+\rho)=t$, then expanding the hyperbolic cosine into its Taylor series we get $1+\rho = \cosh(t)>1+\frac{t^{2}}{2}$ which implies $0<t<\sqrt{2\rho}$, $\arcosh(1+\rho)<\sqrt{2\rho}$ and $T_{q}(1+\rho)<\cosh(q\sqrt{2\rho}) < e^{q\sqrt{2\rho}}$.

By uniformity of \eqref{eq:130} in the energy on $B$, there exists $q_{1}=q_{1}(\varepsilon,\omega,  E_{0})$  such that 
$q^{-1}\log|\Delta_{\omega,q}(E)| - \gamma(E) \leq \varepsilon$
uniformly in $E\in B$ if $q>q_{1}$. Uniform continuity of the Lyapunov exponent implies $\sup_{E\in B} q^{-1}\log|\Delta_{\omega,q}(E)| \leq \gamma(  E_{0})+2\varepsilon$
whenever $q>q_{1}$. The Remez inequality \eqref{eq:81}, our choice of $\delta$ and the fact that $\widetilde{\delta} = \widetilde{\delta}(\varepsilon)<\frac{2\delta\varepsilon^{2}}{1+\varepsilon^{2}}$, imply
\begin{equation}\label{eq:90}
\sup_{E\in B_{\delta}(  E_{0})}(q^{-1}\log |\Delta_{\omega,q}(E)|-\gamma(E)) 
\leq \frac{2\sqrt{\widetilde\delta}}{\sqrt{2\delta-\widetilde{\delta}}}+3\varepsilon \leq 5\varepsilon
\end{equation} for $q>q_{1}$.

Fix  $  E_{0}\in K$, let  $\delta=\delta(\varepsilon)$ and $q_{1}=q_{1}(\varepsilon,\omega,  E_{0})$ be as above. We claim that there exists $q_{3}=q_{3}(\varepsilon,\omega,  E_{0})$ such that 
$q^{-1}\log |\Delta_{\omega,q}'(E)| \leq \gamma(E) + \varepsilon$
for all $E\in B_{\delta}(E_{0})$ if $q>q_{3}$. Indeed, the Markov inequality gives 
\begin{equation}\label{eq:78}
\sup_{E\in B_{\delta}(  E_{0})}(q^{-1}\log |\Delta_{\omega,q}'(E)|-\gamma(E))
\leq q^{-1}\log \left( \frac{2q^{2}}{m(B_{\delta}(  E_{0}))} \right) + \sup_{E\in B_{\delta}(  E_{0})} (q^{-1}\log|\Delta_{\omega,q}(E)|-\gamma(E))
\end{equation}
and there exists $q_{2}=q_{2}(\varepsilon)$ such that $q^{-1}\log\frac{2q^{2}}{2\delta}<\frac{\varepsilon}{2}$ for all $q>q_{2}$, therefore \eqref{eq:78} implies that $\sup_{E\in B_{\delta}(  E_{0})}(q^{-1}\log |\Delta_{\omega,q}'(E)|-\gamma(E)) \leq \varepsilon$ for $q>q_{3}=q_{3}(\varepsilon,\omega,  E_{0}) = \max\{q_{1}(\frac{\varepsilon}{2},\omega,  E_{0}),q_{2}(\varepsilon)\}$.

The set $\{B_{\delta}(E)\}_{E\in K}$ is an open cover for the compact interval $K$ so there exists a finite subset $X\subset K$ such that $K\subset \{B_{\delta}(E)\}_{E\in X}$. Let $q_{0}=q_{0}(\varepsilon,\omega)=\max\{q_{3}(\varepsilon,\omega,E):E\in X\}$, then for any $\varepsilon>0$, we have $\sup_{q>q_{0}} \sup_{E\in K} (q^{-1}\log |\Delta_{\omega,q}'(E)| - \gamma(E) ) \leq \varepsilon$.
\end{proof}

\begin{proof}[Proof of the third statement of Lemma \ref{thm1}]
Let $\Phi_{\omega,q}$ denote the $q$-step transfer matrix of the quasi-periodic operator $H_{\omega}$ defined over the irrational shift by $\alpha\in\R\setminus\Q$. Let $\Phi_{\omega,q,\alpha_{n}}$ denote the $q$-step transfer matrix of the operator $H_{\omega,q}^{(\alpha_{n})}$ approximating $H_{\omega,q}$.  A theorem of Furman \cite{Furman} states that for any energy $E\in\R$, 
\begin{equation}\label{eq:204}
\limsup_{q\rightarrow\infty} q^{-1}\log \|\Phi_{\omega,q}(E)\| \leq \gamma(E)
\end{equation}
holds uniformly in $\omega\in \Omega$ in the case where $T$ is a uniquely ergodic transformation. The irrational shift is uniquely ergodic so the theorem indeed applies to $\Phi_{\omega,q}$. 

We begin by showing that for any $\varepsilon>0$, there exists $q_{2}=q_{2}(\varepsilon)$ and $N_{0}=N_{0}(\varepsilon)$ such that if $q>q_{2}$ and $n>N_{0}$ then 
\begin{equation}\label{uniformity1}
q^{-1}\log\|\Phi_{\omega,q,\alpha_{n}}(E)\|\leq \gamma(E) + \varepsilon 
\end{equation}
for every $(\omega,E)\in \Omega\times K $.

Denote $ f_{q,\alpha_{n}}(\omega,E)=\log\|\Phi_{\omega,q,\alpha_{n}}(E)\|$ and $f_{q}(\omega,E)=\log\|\Phi_{\omega,q}(E)\|$. Fix $E_0\in K$ and $\varepsilon>0$, by \eqref{eq:204} there exists $\widetilde{q} = \widetilde{q}(  E_{0},\varepsilon)$ such that $\widetilde{q}^{-1}f_{\widetilde{q}}(\omega,  E_{0}) \leq \gamma(  E_{0})+\frac{\varepsilon}{8}$ for all $\omega\in\Omega$. The function $f_{\widetilde{q}}$ is continuous in $(\omega,E)\in \Omega\times K$ and thus uniformly continuous on $ \Omega\times K$ so there exists $\delta=\delta(\varepsilon)>0$ such that $|f_{\widetilde{q}}(\omega, E)-f_{\widetilde{q}}(\omega,   E_{0})|<\frac{\varepsilon}{8} $ for all $(\omega,E)\in \Omega\times B_{\delta}(E_0)$. Since the entries of the $q$-step transfer matrix $\Phi_{\omega,q,\alpha_{n}}$ converge to the corresponding entries of $\Phi_{\omega,q}$, uniformly in $(\omega,E)$,  let $N_{0}=N_{0}(\varepsilon)$ be such that $\max_{(\omega,E)\in \Omega\times K}|f_{\widetilde{q}}(\omega,E) - f_{\widetilde{q},\alpha_{n}}(\omega,E)|<\frac{\varepsilon}{4}$, whenever $n>N_{0}$. Then 
\begin{equation}\label{fixed_{q}}
{\widetilde{q}}^{-1}f_{\widetilde{q},\alpha_{n}}(\omega,E)
\leq \widetilde{q}^{-1}f_{\widetilde{q}}(\omega,E) + \frac{\varepsilon}{8} 
\leq \widetilde{q}^{-1}f_{\widetilde{q}}(\omega,E_0) + \frac{2\varepsilon}{8} 
\leq \gamma(E_0) + \frac{3\varepsilon}{8}
\end{equation}
for all $(\omega,E)\in \Omega \times B_{\delta}(E_0)$, whenever $n>N_{0}$ where we have assumed $\widetilde{q}\geq 2$ in the first inequality. 

 By subadditivity and uniformity in $\omega\in\Omega$ of \eqref{fixed_{q}}, we have
$$
f_{k\widetilde{q}+r,\alpha_{n}}(\omega,E) 
\leq f_{r,\alpha_{n}}(T_{\alpha_{n}}^{k\widetilde{q}}\omega,E) +\sum_{j=0}^{k-1}f_{\widetilde{q},\alpha_{n}}(T_{\alpha_{n}}^{j\widetilde{q}}\omega,E)
\leq r \max_{(\omega,E)\in \Omega \times K} |f_{1,\alpha_{n}}(\omega,E)| + k\widetilde{q}\left(\gamma(E_0) + \frac{3\varepsilon}{8}\right)
$$
there exists $C_{0}$ such that $ \max_{(\omega,E)\in \Omega \times K} |f_{1,\alpha_{n}}(\omega,E)|\leq C_{0}$ for any $n$, thus 
$$
\frac{1}{k\widetilde{q}+r}f_{k\widetilde{q}+r,\alpha_{n}}(\omega,E) \leq 
\frac{r}{k\widetilde{q}+r}C_{0} + \frac{k\widetilde{q}}{k\widetilde{q}+r}\left(\gamma(E_0)+\frac{3\varepsilon}{8}\right).
$$
Let $k_{0}=k_{0}(\varepsilon)$ be such that $k^{-1}C_{0} < \frac{\varepsilon}{8}$ for $k>k_{0}$, then $(k\widetilde{q}+r)^{-1}f_{k\widetilde{q}+r,\alpha_{n}}(\omega,E)\leq \gamma(E_0)+\frac{\varepsilon}{2}$ for all $(\omega,E)\in \Omega \times B_{\delta}(E_0)$, for any $k>k_{0}$ and $r=0,\dots,\widetilde{q}-1$. Let $q_{1}=q_{1}(\varepsilon,  E_{0}) = k_{0}\widetilde{q}+1$, then 
$q^{-1} f_{q,\alpha_{n}}(\omega,E)\leq \gamma(  E_{0})+\frac{\varepsilon}{2}$
 for all $(\omega,E)\in\Omega\times B_{\delta}(  E_{0})$ whenever $q>q_{1}$ and $n>N_{0}$.

By uniform continuity of the Lyapunov exponent $\gamma$ on $K$, there exists $\delta' = \delta'(\varepsilon)$ such that $|\gamma(E)-\gamma(E')|<\frac{\varepsilon}{2}$ whenever $|E-E'|<\delta'$. Let $\widetilde{\delta} = \widetilde{\delta}(\varepsilon) = \min\{\delta(\varepsilon),\delta'(\varepsilon)\}$. Then for any $\varepsilon>0$ and $  E_{0}\in K$, we have $q^{-1} f_{q,\alpha_{n}}(\omega,E)\leq \gamma(  E_{0}) +\frac{\varepsilon}{2} \leq \gamma(E)+\varepsilon$ for any $(\omega,E)\in\Omega\times B_{\widetilde{\delta}}(  E_{0})$ whenever $q>q_{1}$ and $n>N_{0}$.

$\{B_{\widetilde{\delta}}(E)\}_{E\in K}$ is an open cover for  $K$. Let $X\subset K$ be a finite subset such that $K\subset \{B_{\widetilde{\delta}}(E)\}_{E\in X}$. Define $q_{2}(\varepsilon)=\max_{E\in X}q_{1}(\varepsilon,E)$,  if $q>q_{2}$ and $n>N_{0}$ then \eqref{uniformity1} follows for all $(\omega,E)\in \Omega\times K$. 

Now that we have uniformity of the norm, we differentiate the trace to obtain the same for the derivative of the discriminant. Let $P$ be the projection $P(x,y)^\intercal=(x,0)^\intercal$, then 
$$\Delta'_{\omega,q,\alpha_{n}}(E)=\frac{d}{dE}\text{Tr}(\Phi_{\omega,q,\alpha_{n}}(E))=\sum_{j=0}^{q-1}\text{Tr}(\Phi_{T_{\alpha_{n}}^{q-j}\omega,j,\alpha_{n}}(E)P\Phi_{\omega,q-(j+1),\alpha_{n}}(E))$$ 
where we take $\Phi_{\cdot,0,\cdot}=I$. Then $|\Delta_{\omega,q,\alpha_{n}} | \leq 2\|\Phi_{\omega,q,\alpha_{n}}\|$ implies 
\begin{equation}\label{eq:125}
|\Delta'_{\omega,q,\alpha_{n}}(E)|\leq 2q\max_{0\leq j\leq q-1} \|\Phi_{T_{\alpha_{n}}^{q-j}\omega,j,\alpha_{n}}(E)\|\|\Phi_{\omega,q-(j+1),\alpha_{n}}(E)\|.
\end{equation}
Let $S_{\omega,q,\alpha_{n}}(E)$ denote the maximum on the RHS of \eqref{eq:125}. For each $n,q\geq 1$ there exists  $\zeta_{n}(q) = \zeta(q)$ with $ 0 \leq \zeta(q)\leq q-1$, such that 
$S_{\omega,q,\alpha_{n}}(E)=\|\Phi_{T_{\alpha_{n}}^{q-\zeta(q)}\omega,\zeta(q),\alpha_{n}}(E)\|\|\Phi_{\omega,q-(\zeta(q)+1),\alpha_{n}}(E)\|$. \eqref{uniformity1} implies $\|\Phi_{\omega,q,\alpha_{n}}(E)\|\leq e^{(\gamma(E)+\varepsilon)q}$ for all $(\omega,E)\in \Omega\times K$, whenever $q>q_{2}$ and $n>N_{0}$. 

Consider $$q^{-1}\log S_{\omega,q,\alpha_{n}}(E)=q^{-1}\frac{\zeta(q)}{\zeta(q)}\log\|\Phi_{T_{\alpha_{n}}^{q-\zeta(q)}\omega,\zeta(q),\alpha_{n}}(E)\| + q^{-1}\frac{q-(\zeta(q)+1)}{q-(\zeta(q)+1)}\log\|\Phi_{\omega,q-(\zeta(q)+1),\alpha_{n}}(E)\|.$$ 
For $q_{3}=q_{3}(\varepsilon)$ to be determined later, we have 
\begin{equation*}
\begin{split} 
\{q\geq q_{3}\} 
&= \{q\geq q_{3}: \zeta(q),q-(\zeta(q)+1)\leq q_{2}\} \cup\{q\geq q_{3}: \zeta(q)\leq q_{2}<q-(\zeta(q)+1)\}  \\
&\cup \{q\geq q_{3}: q-(\zeta(q)+1)\leq q_{2} < \zeta(q)\}\cup \{q\geq q_{3}: q_{2}<\zeta(q),q-(\zeta(q)+1)\}\\
&= X_{0}\cup X_{1}\cup X_{2}\cup X_{3}.
\end{split}
\end{equation*}

If $q\in X_{3}$, then \eqref{uniformity1} implies $q^{-1}\log S_{\omega,q,\alpha_{n}}(E) \leq q^{-1}(\zeta(q)+q-(\zeta(q)+1))(\gamma(E)+\varepsilon) \leq \gamma(E)+\varepsilon$ for any $(\omega,E)\in \Omega\times K$. If $q\in X_{2}$, then $q^{-1}\log S_{\omega,q,\alpha_{n}}(E) \leq \gamma(E)+\varepsilon+q^{-1}\log\|\Phi_{\omega,q-(\zeta(q)+1),\alpha_{n}}(E)\|\leq \gamma(E)+\varepsilon+q^{-1}C_{1}$ and there exists $q_{4}=q_{4}(\varepsilon)$ such that $q^{-1}C_{1}<\varepsilon$ whenever $q>q_{4}$. In particular, 
\begin{equation}\label{eq:75}
q^{-1}\log S_{\omega,q,\alpha_{n}}(E) \leq \gamma(E)+2\varepsilon
\end{equation}
 for all $(\omega,E)\in \Omega\times K$ whenever $q\in X_{2}$ where $q_{3}>q_{4}$. 

A similar argument can be made for $q\in X_{1}$, for which there exists $q_{5}$ such that $q^{-1}\widetilde{C}_{1}<\varepsilon$ whenever $q>q_{5}$ which  implies \eqref{eq:75} for all $(\omega,E)\in \Omega\times K$ when $q\in X_{1}$, with $q_{3}>q_{5}$. Note that if $q_{3}\geq 2(q_{2}+1)$, then $X_{0}=\varnothing$. Take $q_{3} > \max \{2(q_{2}+1),q_{4},q_{5}\}$, then we have \eqref{eq:75} for all $(\omega,E)\in \Omega\times K$ whenever $q>q_{3}$ and $n>N_{0}$. 

The dependence of $\zeta$ on $n$ is not problematic since \eqref{eq:75} holds for any $n>N_{0}$. Take $q_{6} = q_{6}(\varepsilon) \geq q_{3}$ such that $q>q_{6}$ implies $q^{-1}\log(2q)\leq \varepsilon$, then take  $q_{0}(\varepsilon) = q_{6}(\frac{\varepsilon}{3})$. Increase $N_{0}(\varepsilon)$ such that $q_{n}>q_{0}(\varepsilon)$ whenever $n>N_{0}$, then for $\varepsilon>0$, 
$\sup_{n>N_{0}} \sup_{(\omega,E)\in  \Omega\times K}(q_{n}^{-1}\log|\Delta'_{\omega,q_{n},\alpha_{n}}(E)|-\gamma(E)) \leq \varepsilon$.
\end{proof}

\section{Upper bound.}\label{sec:2}

We use a standard subharmonicity trick to show that the assumptions $\omega\in Q_{\text{NR}}(\varepsilon,n,q)$ and $e^{-\frac{1}{2}n\min\gamma }<\frac{1}{2}$ are enough to obtain localisation of the eigenvectors $\psi_{\omega,\varkappa}^{(j)}$ for  fixed $\varkappa\in\R/\frac{2\pi}{q}\Z$,  allowing the localisation centre to vary with the Floquet number and from this we show that  Theorem \ref{cl:9} follows.

\begin{lemma}\label{distancedecay}
Let $H_{\omega}$ be an ergodic Schr\"odinger operator \eqref{ergodicop}. If $e^{-\frac{1}{2}n\min\gamma }<\frac{1}{2}$ then for any $\varepsilon>0$, there exists $C=C(\varepsilon)$  such that for any $\omega\in  Q_{\emph{NR}}(\varepsilon,n,q)$ and $\varkappa\in\R/\frac{2\pi}{q}\Z$ there exists a set $ \{\nu_{\omega,j,\varkappa}\}_{j}\subseteq\Z_{q}$ of localisation centres such that for each $j=1,\dots,q$, if $\|x-\nu_{\omega,j,\varkappa}\|_{q}>Cn$, then $\big| \psi_{\omega,\varkappa}^{(j)}(x)\big|\leq e^{-\big(\gamma\big( E_{\omega,\varkappa}^{(j)}\big)-2\varepsilon\big)\|x-\nu_{\omega,j,\varkappa}\|_{q}}.$
\end{lemma}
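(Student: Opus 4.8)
The plan is to run the standard iteration-of-the-resolvent-identity argument for localisation, carefully keeping track of the fact that we work on the circle $\Z_q$ rather than the line, and that the Floquet number $\varkappa$ enters only through the (fixed) operator $H_q[\omega,\varkappa]$. Fix $j$, write $E = E_{\omega,\varkappa}^{(j)}$, $\psi = \psi_{\omega,\varkappa}^{(j)}$, and $\gamma = \gamma(E)$. For a site $y$ with $B_n(y)$ not containing a resonance, the key deterministic input is that $E$ is not an eigenvalue of $H_q[\omega,\varkappa]\upharpoonright B_n(y)$ with good separation, so the Green function $G_{\omega,\varkappa,B_n(y),E,q}$ exists and, by the sub-harmonicity/transfer-matrix bound behind the definition of non-resonance (together with $e^{-\frac12 n\min\gamma}<\frac12$), one has $|G_{\omega,\varkappa,B_n(y),E,q}(y,y\pm\overline n_q)| < e^{-(\gamma-\varepsilon)n}$. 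The first point to nail down is that the non-resonance condition, which is stated for $\varkappa = 0$, transfers to general $\varkappa$: conjugating by the diagonal unitary $D_\varkappa$ (as in Section~\ref{sec:1}) changes $H_q[\omega,\varkappa]\upharpoonright \Lambda$ into $H_q[\omega,0]\upharpoonright\Lambda$ up to a gauge phase on an \emph{arc} $\Lambda$ (no wrap-around, so the phase is removable), hence $|G_{\omega,\varkappa,\Lambda,E,q}(x,y)| = |G_{\omega,0,\Lambda,E,q}(x,y)|$ for $\Lambda$ a proper arc; this is why the hypothesis $\omega\in Q_{\mathrm{NR}}(\varepsilon,n,q)$ (phrased with $\varkappa=0$) suffices for all $\varkappa$.

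Next I would set up the geometric-resolvent expansion. For any site $x$ with $B_n(x)\subseteq \Z_q$ non-resonant, the eigenvalue equation $(H_q[\omega,\varkappa]-E)\psi = 0$ restricted to $B_n(x)$ gives the Poisson-type formula
\begin{equation*}
\psi(x) = -\!\!\sum_{(u,u')}\!\! G_{\omega,\varkappa,B_n(x),E,q}(x,u)\,\psi(u'),
\end{equation*}
where the sum is over the two boundary bonds $(u,u')$ of $B_n(x)$ (with $u\in B_n(x)$, $u'\notin B_n(x)$ at distance $1$), so that $|\psi(x)| \le 2e^{-(\gamma-\varepsilon)n}\max\{|\psi(x-\overline{n+1}_q)|,|\psi(x+\overline{n+1}_q)|\}$. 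Now define $\nu = \nu_{\omega,j,\varkappa}$ to be a site where $|\psi|$ is maximal; since the diameter of $\mathrm{Res}_\omega(E,\varepsilon,n)$ is at most $2n$, every site $x$ with $\|x-\nu\|_q > Cn$ (for $C$ a fixed multiple of the implied constants, say $C = 10$) has the property that a whole corridor of boxes $B_n$ between $\nu$ and $x$ is non-resonant. Iterating the one-step bound along this corridor of roughly $\|x-\nu\|_q/(n+1)$ steps, each contributing a factor $2e^{-(\gamma-\varepsilon)n} = e^{-(\gamma-\varepsilon)n+\log 2} \le e^{-(\gamma-\varepsilon)n + \frac12 n\min\gamma}\le e^{-(\gamma - \varepsilon - \frac12\min\gamma)n}$... — here one must absorb the $\log 2$; using $e^{-\frac12 n\min\gamma} < \frac12$ gives $2e^{-(\gamma-\varepsilon)n}\le e^{-(\gamma - \varepsilon - \tfrac12\min\gamma) n}$, and then a slightly more careful bookkeeping (or replacing $\gamma-\varepsilon$ by $\gamma - \tfrac32\varepsilon$ and using $n$ large) yields a net rate at least $\gamma - 2\varepsilon$ per unit length, so that $|\psi(x)| \le e^{-(\gamma-2\varepsilon)\|x-\nu\|_q}$ once $\|x-\nu\|_q > Cn$.

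The main obstacle is the bookkeeping at the two ends of the iteration and the handling of the additive $\log 2$ per step: one must check that the corridor of non-resonant boxes really reaches from near $\nu$ out to $x$ (this is exactly what the diameter bound $\le 2n$ on $\mathrm{Res}_\omega$ buys us, and why $C$ has to be a fixed multiple of $n$ rather than a universal constant), and that after $k \approx \|x-\nu\|_q/(n+1)$ steps the accumulated constant $2^k$ is beaten by the exponential decay — this is precisely where $e^{-\frac12 n\min\gamma}<\frac12$ is used, converting $2^k \le e^{\frac12 k n\min\gamma}$ into a harmless correction to the rate. The choice $C = C(\varepsilon)$ comes from requiring $n$ large enough (via $\varepsilon$) that $\gamma - \tfrac32\varepsilon - (\text{loss from the }\le Cn \text{ initial segment where no bound is claimed})$ still exceeds $\gamma - 2\varepsilon$ on average; a secondary subtlety is that $\gamma$ need not be constant in $E$, but since the whole argument is at the single energy $E = E_{\omega,\varkappa}^{(j)}$ this causes no trouble. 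Finally, the set $\{\nu_{\omega,j,\varkappa}\}_j$ is just the collection of these per-eigenvector maximisers, and the lemma is proved; the $\varkappa$-dependence of $\nu_{\omega,j,\varkappa}$ is left in, to be removed later in Theorem~\ref{uniformlemma} under the extra separation hypothesis.
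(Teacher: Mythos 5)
There is a genuine gap at the heart of your iteration: you define $\nu=\nu_{\omega,j,\varkappa}$ as a maximiser of $|\psi_{\omega,\varkappa}^{(j)}|$ and then assert that for $\|x-\nu\|_q>Cn$ "a whole corridor of boxes between $\nu$ and $x$ is non-resonant", citing only the diameter bound on $\text{Res}_{\omega}(E,\varepsilon,n)$. But the hypothesis $\omega\in Q_{\text{NR}}(\varepsilon,n,q)$ controls the \emph{size} of the resonant set, not its \emph{position}: a priori the resonant arc could sit anywhere on $\Z_q$, in particular far from $\nu$ and even containing $x$ itself, in which case the Poisson/resolvent step cannot be started at $x$ and your argument yields no bound on $|\psi(x)|$ although $\|x-\nu\|_q>Cn$. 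What the expansion actually gives (iterating as long as the visited sites are non-resonant) is decay in $d_x=\mathrm{dist}\big(x,\text{Res}_{\omega}(E,\varepsilon,n)\big)$, and to convert this into decay in $\|x-\nu\|_q$ one must know that the resonant set lies within distance $O(n)$ of $\nu$. That is precisely the step the paper proves and your proposal omits: at any non-resonant site the one-step bound together with $e^{-\frac12 n\min\gamma}<\frac12$ gives the \emph{strict} inequality $|\psi(x)|<\max\{|\psi(x\pm\overline{n+1}_q)|\}$, and iterating this in steps of $n+1$ — using B\'ezout's lemma to rule out the possibility that the orbit winds around the circle without ever meeting a resonant site, which would give $|\psi(x)|<|\psi(x)|$ — shows that every non-resonant site is strictly dominated by some resonant one, hence the maximiser $\nu$ is itself resonant. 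Only then does $d_x\geq\|x-\nu\|_q-2n$ hold and the $d_x$-decay transfer to $\|x-\nu\|_q$-decay.

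Two smaller points. First, $C$ cannot be a universal constant like $10$: the passage from $e^{-(\gamma-\varepsilon)d_x}$ to $e^{-(\gamma-2\varepsilon)\|x-\nu\|_q}$ costs the additive $O(n)$ offset $r_x\le 2n$, and absorbing it into the rate requires $2C^{-1}\max_{E\in K}\gamma(E)<\varepsilon$, i.e.\ $C=C(\varepsilon)$ (you gesture at this later, but the dependence should be stated correctly from the start). Second, your per-step bookkeeping "$2e^{-(\gamma-\varepsilon)n}\le e^{-(\gamma-\varepsilon-\frac12\min\gamma)n}$" degrades the rate by $\frac12\min\gamma$, which is not acceptable; the factor $2$ (or the binomial count of walks, $2^{\lceil d_x/(n+1)\rceil}$) must be absorbed per block of length $n+1$, where it is exponentially negligible thanks to $\log 2<\frac12 n\min\gamma$, exactly as in the paper's estimate. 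The gauge argument transferring non-resonance from $\varkappa=0$ to general $\varkappa$ on a proper arc, and the Poisson formula itself, are fine and agree with the paper.
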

\begin{proof}
Fix $\varepsilon>0$, $\varkappa\in\R/\frac{2\pi}{q}\Z$, $e^{-\frac{1}{2}n\min\gamma }<\frac{1}{2}$ and $\omega\in  Q_{\text{NR}}(\varepsilon,n,q)$. One can show that if $x\in\text{Res}^{\mathsf{c}}_{\omega}(E_{\omega,\varkappa}^{(j)},\varepsilon,n)$ and $H_{q}[\omega,\varkappa]\psi_{\omega,\varkappa}^{(j)}=E_{\omega,\varkappa}^{(j)}\psi_{\omega,\varkappa}^{(j)}$, then 
\begin{equation*}
\psi_{\omega,\varkappa}^{(j)} (x)=-G_{\omega,\varkappa,B_{n}(x), E_{\omega,\varkappa}^{(j)},q}(x,x-\overline{n}_{q}) \psi_{\omega,\varkappa}^{(j)}(x-\overline{n+1}_{q})-G_{\omega,\varkappa,B_{n}(x), E_{\omega,\varkappa}^{(j)},q}(x,x+\overline{n}_{q}) \psi_{\omega,\varkappa}^{(j)} (x+\overline{n+1}_{q}).
\end{equation*}
Moreover, we have $|G_{\omega,\varkappa,B_{n}(x), E_{\omega,\varkappa}^{(j)},q}|=|G_{\omega,0,B_{n}(x), E_{\omega,\varkappa}^{(j)},q}|$ since, as in the beginning of Section \ref{sec:1}, the two operators are equal up to a conjugation by $\psi(\overline{k}_{q})\mapsto e^{ik\varkappa}\psi(\overline{k}_{q})$ for $\overline{k}_{q}\in B_{n}(x)$, $k\in[0,q-1]$, thus $x\in\text{Res}^{\mathsf{c}}_{\omega}\big( E_{\omega,\varkappa}^{(j)},\varepsilon,n\big)$ implies 
 \begin{equation}\label{eq:166}
\big|\psi_{\omega,\varkappa}^{(j)} (x)\big|\leq e^{-\big(\gamma\big( E_{\omega,\varkappa}^{(j)}\big)-\varepsilon\big)n}\big(\big|\psi_{\omega,\varkappa}^{(j)} \big(x+\overline{n+1}_{q}\big)\big|+\big|\psi_{\omega,\varkappa}^{(j)} \big(x-\overline{n+1}_{q}\big)\big|\big).
\end{equation} 

It's enough to consider only those $j$ for which  $2\varepsilon<\gamma\big(E_{\omega,\varkappa}^{(j)}\big)$ since we assume $\big\|\psi_{\omega,\varkappa}^{(j)}\big\|=1$. 
Let $\nu_{\omega,j,\varkappa}\in\Z_{q}$ be a maximum of the eigenvector $\psi_{\omega,\varkappa}^{(j)}$,  we show that $\nu_{\omega,j,\varkappa}\in\text{Res}_{\omega}\big( E_{\omega,\varkappa}^{(j)},\varepsilon,n\big)$. Indeed, since $e^{-\frac{1}{2}n\min\gamma }<\frac{1}{2}$, we deduce  from \eqref{eq:166} that for any non resonant $x$,
\begin{equation}\label{eq:250}
 \big|\psi_{\omega,\varkappa}^{(j)}(x)\big|<\max\big\{\big|\psi_{\omega,\varkappa}^{(j)}\big(x+\overline{n+1}_{q}\big)\big|,\big|\psi_{\omega,\varkappa}^{(j)}\big(x-\overline{n+1}_{q}\big)\big|\big\} = \big|\psi_{\omega,\varkappa}^{(j)}\big(x+\alpha\big(\overline{n+1}_{q}\big)\big)\big|
\end{equation}
  for some $\alpha\in\{1,-1\}$. We can iterate \eqref{eq:250} to get $\big|\psi_{\omega,\varkappa}^{(j)}\big(x+\alpha\big(\overline{n+1}_{q}\big)\big)\big| < \big|\psi_{\omega,\varkappa}^{(j)}\big(x+2\alpha\big(\overline{n+1}_{q}\big)\big)\big|$
and keep going until we find the smallest positive integer $k_{0}$ for which $x+k_{0}\alpha(\overline{n+1}_{q})\in \text{Res}_{\omega}^{\mathsf{c}}\big( E_{\omega,\varkappa}^{(j)},\varepsilon,n\big)$. This shows, in particular, that $\big|\psi_{\omega,\varkappa}^{(j)}(x)\big|<\big|\psi_{\omega,\varkappa}^{(j)}(x')\big|$ for some $x'\in\text{Res}_{\omega}^{\mathsf{c}}\big( E_{\omega,\varkappa}^{(j)},\varepsilon,n\big)$. If no such $k_{0}$ exists, then  B\'ezout's lemma leads to the contradiction $\big|\psi_{\omega,\varkappa}^{(j)}(x)\big|<\big|\psi_{\omega,\varkappa}^{(j)}(x)\big|$. Indeed, B\'ezout's lemma ensures the existence of $k_{1},k_{2}$ such that $(qk_{1})(n+1)+(qk_{2})q = q\,\text{gcd}(q,n+1)$. Since $x$ is an arbitrary non-resonant site, the same is true for all  non-resonant sites.

 Let $\Lambda_{j}=\Lambda_{j,\varkappa}\subset \mathbb{Z}_{q}$ be an arc containing the resonant sites corresponding to the $j$-th eigenvalue of $H_{q}[\omega,\varkappa]$ and with the same diameter.  For any $x=x^{(j,\varkappa)}\in\mathbb{Z}_{q}$ denote $d_{x}=d_{x}^{(j,\varkappa)}=\text{dist}\big(x, \text{Res}_{\omega}\big( E_{\omega,\varkappa}^{(j)},\varepsilon,n\big)\big)$. Since we are only concerned with $x\in\Z_{q}$ for which  $\|x-\nu_{\omega,j,\varkappa}\|_{q}>Cn$, we may assume that $C\geq 2$ and thus  consider only  $x\in  \Lambda_{j}^{\mathsf{c}}$. 
 
 Writing $d_{x} = (M+1)(n+1)-r$ for  some $1\leq r < n+1$, we have $x\pm m(\overline{n+1}_{q})\in \Lambda_{j}^{\mathsf{c}}$ for each $1\leq m\leq M$ otherwise $\text{dist}(x,\Lambda_{j}) \leq\|k(\overline{n+1}_{q})\|_{q} = m(n+1) < d_{x}$ since for each $1\leq m \leq M$ we have $m(n+1)\leq M(n+1)<\frac{q}{2}$. So we can iterate \eqref{eq:166} $ \widetilde{d}_{x}  =  \left\lceil \frac{d_{x}}{n+1}\right\rceil = M+1$ times, giving 
\begin{equation*}
\begin{split}
\big|\psi_{\omega,\varkappa}^{(j)}(x)\big|
&\leq e^{-\big(\gamma\big( E_{\omega,\varkappa}^{(j)}\big)-\varepsilon\big)n\widetilde{d}_{x}} \sum_{s=0}^{\widetilde{d}_{x}}{\widetilde{d}_{x}\choose s}\big|\psi_{\omega,\varkappa}^{(j)}\big(x+(2s-\widetilde{d}_{x})(\overline{n+1}_{q})\big)\big| \\
&\leq 2^{\widetilde{d}_{x}}e^{-\big(\gamma\big( E_{\omega,\varkappa}^{(j)}\big)-\varepsilon\big)n\widetilde{d}_{x}} \leq e^{-\big(\gamma\big( E_{\omega,\varkappa}^{(j)}\big)-\varepsilon\big)n\widetilde{d}_{x}+ \widetilde{d}_{x}\log3} \leq e^{-\big(\gamma\big( E_{\omega,\varkappa}^{(j)}\big)-\varepsilon\big)d_{x}}.
\end{split} 
\end{equation*}

We have $e^{-\big(\gamma\big(E_{\omega,\varkappa}^{(j)}\big)-\varepsilon\big)d_{x}}<e^{-\big(\gamma\big( E_{\omega,\varkappa}^{(j)}\big)-2\varepsilon\big)\|x-\nu_{\omega,j,\varkappa}\|_{q}}$ for appropriate $x\in\Z_{q}$. Indeed, write $r_{x}=r_{x}^{(j,\varkappa)}\leq 2n$ such that 
$d_{x}=\|x-\nu_{\omega,j,\varkappa}\|_{q}-r_{x}$. It suffices to show $\varepsilon > \frac{r_{x}\gamma\big( E_{\omega,\varkappa}^{(j)}\big)}{\|x-\nu_{\omega,j,\varkappa}\|_{q}+r_{x}},$ the RHS of which is at most $2C^{-1}\max_{E\in K}\gamma(E)$ since $0\leq r_{x}\leq 2n$ and $\|x-\nu_{\omega,j,\varkappa}\|_{q}>Cn$. It remains to take $C=C(\varepsilon)\geq 2$ satisfying   $2C^{-1}\max_{E\in K}\gamma(E)< \varepsilon$.
\end{proof}

\begin{proof}[Proof of Theorem \ref{cl:9}]
Fix $\varepsilon>0$, $e^{-\frac{1}{2}n\min\gamma}<\frac{1}{2}$ and $\omega\in Q_{\text{NR}}(\varepsilon,n,q)$. We show that for large enough period $q$ we have $m\big(B_{\omega,q}^{(j)}\big) \leq e^{-\big(\gamma\big(b_{\omega,q}^{(j)}\big)-3\varepsilon\big)q}$ for each $j=1,\dots,q$. By Lemma \ref{smallbands} we need only consider those $j$ for which $\gamma\big(b_{\omega,q}^{(j)}\big)>3\varepsilon$, assuming $q_{0}\geq 7$. 

By Lemma \ref{distancedecay} there exists $C=C(\varepsilon)$ such that for any $\varkappa\in\R/\frac{2\pi}{q}\Z$ there exists a set $\{\nu_{\omega,j,\varkappa}\}_{j}\subseteq\Z_{q}$ of localisation centres such that 
for each $j=1,\dots,q$, if $\|x-\nu_{\omega,j,\varkappa}\|_{q}>Cn$ then 
\begin{equation}\label{eq:200}
\big| \psi_{\omega,\varkappa}^{(j)}(x)\big| < e^{-\big(\gamma\big( E_{\omega,\varkappa}^{(j)}\big)-2\varepsilon\big)\|x-\nu_{\omega,j,\varkappa}\|_{q}}.
 \end{equation}

Define $(\mathcal{L}\psi)(x) = \psi(x+\overline{1}_{q})$ and $(D_{\varkappa}\psi)(\overline{k}_{q}) = e^{-ki\varkappa}$. For any $l\in\Z$ and $\varkappa\in\R/\frac{2\pi}{q}\Z$ define $H_{q}^{(l)}[\omega,\varkappa] = D_{\varkappa}\mathcal{L}^{l}H_{q}[\omega,\varkappa]\mathcal{L}^{-l}D_{\varkappa}^{-1}$ which in particular means that the eigenvalues of $H_{q}^{(l)}[\omega,\varkappa]$ do not depend on $l$. 

The dependence on $\varkappa$ of the operator $H_{q}^{(l)}[\omega,\varkappa]$ are exactly at $k=0$ and $k=q-1$, as seen by 
$$
(H_{q}^{(l)}[\omega,\varkappa]\psi)(\overline{k}_{q}) = e^{-ik\varkappa+i\varkappa(1+(k-1\text{ mod }q))}\psi(\overline{k-1}_{q})+V_{\omega,q}(\overline{k+l}_{q})\psi(\overline{k}_{q})+e^{-ik\varkappa+i\varkappa(-1+(k+1\text{ mod }q))}\psi(\overline{k+1}_{q})
$$ which has the same (up to a rotation of the diagonal) matrix representation as the conjugation given at the beginning of Section \ref{sec:1}. 

Let $\widetilde{\psi}_{\omega,\varkappa,l}^{(j)}  = D_{\varkappa}\mathcal{L}^{l}\psi_{\omega,\varkappa}^{(j)}$ then $H_{q}^{(l)}[\omega,\varkappa]\widetilde{\psi}_{\omega,\varkappa,l}^{(j)}=E_{\omega,\varkappa}^{(j)}\widetilde{\psi}_{\omega,\varkappa,l}^{(j)}$ and Feynman-Hellmann gives  
\begin{equation*}
\frac{dE_{\omega,\varkappa}^{(j)}}{d\varkappa} = \left\langle \frac{dH_{q}^{(l)}[\omega,\varkappa]}{d\varkappa} \widetilde{\psi}_{\omega,\varkappa,l}^{(j)}, \widetilde{\psi}_{\omega,\varkappa,l}^{(j)} \right\rangle =qie^{i\varkappa}\psi_{\omega,\varkappa}^{(j)}(\overline{l+q-1}_{q})\overline{\psi_{\omega,\varkappa}^{(j)}(\overline{l}_{q})}-
qie^{-i\varkappa}\psi_{\omega,\varkappa}^{(j)}(\overline{l}_{q})\overline{\psi_{\omega,\varkappa}^{(j)}(\overline{l+q-1}_{q})} 
\end{equation*}
which implies $\left|\frac{dE_{\omega,\varkappa}^{(j)}}{d\varkappa}\right| \leq 2q |\psi_{\omega,\varkappa}^{(j)}(\overline{l}_{q})||\psi_{\omega,\varkappa}^{(j)}(\overline{l+q-1}_{q})|$. Choose $l=l_{\omega,j,\varkappa}$ so that for each $j=1,\dots,q$ and $\varkappa\in\R/\frac{2\pi}{q}\Z$, \eqref{eq:200} gives 
\begin{equation}\label{eq:202}
\big|\psi_{\omega,\varkappa}^{(j)}\big(\overline{l}_{q}\big)\big|\big|\psi_{\omega,\varkappa}^{(j)}\big(\overline{l+q-1}_{q}\big)\big| \leq e^{-\big(\gamma\big( E_{\omega,\varkappa}^{(j)}\big)-2\varepsilon\big)(q-1)}.
\end{equation}

By Lemma \ref{smallbands}, for any $\delta>0$ there exists  $q_{2}=q_{2}(\delta)$ for which $\max_{j}m\big(B_{\omega,q}^{(j)}\big)<\delta$, whenever $q>q_{2}$.  Let  $\delta = \delta(\varepsilon)$ be such that $|\gamma(E)-\gamma(E')|< \frac{\varepsilon}{4}$ for any $E,E'\in K$ with $|E-E'|<\delta$. We have  $\big|\gamma\big( E_{\omega,\varkappa}^{(j)}\big)-\gamma\big(b_{\omega,q}^{(j)}\big)\big|<\frac{\varepsilon}{2}$ for any   $j=1,\dots,q$ and $\varkappa\in\R/\frac{2\pi}{q}\Z$ provided $q>q_{2}$. Applying \eqref{eq:202} to Feynman-Hellmann \eqref{eq:201} and $m\big(B_{\omega,q}^{(j)}\big)=\int_{0}^{\frac{\pi}{q}} \big|\frac{dE_{\omega,\varkappa}^{(j)}}{d\varkappa}\big| \,d\varkappa $, we obtain $m\big(B_{\omega,q}^{(j)}\big) \leq 2\pi e^{-\big(\gamma\big( b_{\omega,q}^{(j)}\big)-\frac{5}{2}\varepsilon\big)(q-1)}$ for each $j=1,\dots,q$. It follows that there exists $q_{3} = q_{3}(\varepsilon)$ such that if $q>q_{3}$ then the RHS is at most  $e^{-\big(\gamma\big(b_{\omega,q}^{(j)}\big)-3\varepsilon\big)q}$. It remains to take $q_{0}(\varepsilon) = \max\{7,q_{1},q_{2},q_{3}\}$. 
\end{proof}

\section{Uniform localisation.}\label{sec:3}

Now we show that the localisation centres of the eigenvectors of $H_{q}[\omega,\varkappa]$ are fixed for all $\varkappa\in\R/\frac{2\pi}{q}\Z$ and thus upgrade Lemma \ref{distancedecay} to Theorem \ref{uniformlemma}.

\begin{proof}[Proof of Theorem \ref{uniformlemma}]
Fix $0<4\varepsilon<\min\gamma$, $j\in[1,q]$, $e^{-\frac{1}{2}n\min\gamma }<\frac{1}{2}$ and take $\omega\in Q_{\text{Sep}}(\frac{\varepsilon}{10},q)\cap  Q_{\text{NR}}(\varepsilon,n,q)$. Let $C=C(\varepsilon)$ and $\{\nu_{\omega,j,\varkappa}\}_{j}\subseteq\Z_{q}$ be given by Lemma \ref{distancedecay}. Then by Lemma \ref{distancedecay}, $\|x-\nu_{\omega,j,0}\|_{q}>Cn$ implies 
$\big| \psi_{\omega,0}^{(j)}(x)\big| < e^{-\big(\gamma\big( E_{\omega,0}^{(j)}\big)-2\varepsilon\big)\|x-\nu_{\omega,j,0}\|_{q}}.$

Our goal is to show that for a large enough period $q$, 
 $\|x-\nu_{\omega,j,0}\|_{q}>Cn$ implies 
\begin{equation*}
\max_{\varkappa\in\R/\frac{2\pi}{q}\Z}\big| \psi_{\omega,\varkappa}^{(j)}(x)\big| < e^{-\big(\gamma\big( E_{\omega,0}^{(j)}\big)-3\varepsilon\big)\|x-\nu_{\omega,j,0}\|_{q}}.
 \end{equation*}

Fix $\varkappa\in\R/\frac{2\pi}{q}\Z$,  let $ \psi_{\omega,0}^{(j)} =\sum_{k}\alpha_{\omega,\varkappa}^{(k)} \psi_{\omega,\varkappa}^{(k)}$, write  $\varphi_{\omega,\varkappa}^{(j)}=\sum_{k\neq j}\alpha_{\omega,\varkappa}^{(k)} \psi_{\omega,\varkappa}^{(k)}$ and note $\varphi_{\omega,\varkappa}^{(j)}\perp \psi_{\omega,\varkappa}^{(j)}$. We show that $\big\|\varphi_{\omega,\varkappa}^{(j)}\big\|$ is small enough to ensure that the perturbed eigenvector $\psi_{\omega,\varkappa}^{(j)}$ does not differ too much from the unperturbed eigenvector $\psi_{\omega,0}^{(j)}$. 

Let $H_{q}^{(l)}[\omega,\varkappa]$ be the operator defined in the proof of Theorem \ref{cl:9}. Let $\widetilde{\psi}_{\omega,\varkappa,l}^{(j)} = D_{\varkappa}\mathcal{L}^{l}\psi_{\omega,\varkappa}^{(j)}$, which satisfies  $H_{q}^{(l)}[\omega,\varkappa]\widetilde{\psi}_{\omega,\varkappa,l}^{(j)}=E_{\omega,\varkappa}^{(j)}\widetilde{\psi}_{\omega,\varkappa,l}^{(j)}$. Let $l=l_{\omega,j}$ and $q_{1}=q_{1}(\varepsilon)$ be such that for $q>q_{1}$, 
\begin{equation}\label{eq:160}
\big|\psi_{\omega,0}^{(j)}(\overline{l}_{q})\big|,\big|\psi_{\omega,0}^{(j)}(\overline{l+q-1}_{q})\big|  \leq  e^{-\big(\gamma\big( E_{\omega,0}^{(j)}\big)-\frac{11}{5}\varepsilon\big)\frac{q}{2}}.
\end{equation}
We have 
\begin{equation}\label{eq:44}
\begin{split}
\big\|\big(H_{q}[\omega,\varkappa]-E_{\omega,0}^{(j)}\big) \psi_{\omega,0}^{(j)}\big\| &=\big\|\big(H_{q}^{(l)}[\omega,\varkappa]-E_{\omega,0}^{(j)}\big) \widetilde{\psi}_{\omega,0,l}^{(j)}\big\|^2 \\
&= \big|e^{iq\varkappa}-1\big|^2\big|\psi_{\omega,0}^{(j)}\big(\overline{l}_{q}\big)\big|^2+\big|e^{-iq\varkappa}-1\big|^2\big| \psi_{\omega,0}^{(j)}\big(\overline{l+q-1}_{q}\big)\big|^2 \\
&< 100e^{-\big(\gamma\big( E_{\omega,0}^{(j)}\big)-\frac{11}{5}\varepsilon\big)q}
\end{split}
\end{equation}
and since the operator $H_{q}[\omega,\varkappa]$ preserves the subspace $ \big(\psi_{\omega,\varkappa}^{(j)}\big)^\perp$, we have 
\begin{equation}\label{eq:43}
\begin{split}
100e^{-\big(\gamma\big( E_{\omega,0}^{(j)}\big)-\frac{11}{5}\varepsilon\big)q}
&>\big\|\big(H_{q}[\omega,\varkappa]- E_{\omega,0}^{(j)}\big) \psi_{\omega,0}^{(j)}\big\|^{2} \geq \big\|\big(H_{q}[\omega,\varkappa]- E_{\omega,0}^{(j)}\big)\varphi_{\omega,\varkappa}^{(j)}\big\|^2 \\
&\geq \big\|\varphi_{\omega,\varkappa}^{(j)}\big\|^2 \min_{k\neq j}\big| E_{\omega,\varkappa}^{(k)}- E_{\omega,0}^{(j)}\big|^2.
\end{split}
\end{equation}

By Theorem \ref{cl:9} there exists $q_{2} = q_{2}(\varepsilon)$ such that if $q>q_{2}$ then for each $j=1,\dots,q$, we have $m\big(B_{\omega,q}^{(j)}\big)<e^{-\big(\gamma\big( b_{\omega,q}^{(j)}\big)-3\varepsilon\big)q}$. There exists $q_{3}=q_{3}(\varepsilon)$ such that for $q>\max\{q_{1},q_{2},q_{3}\}$ and $j=1,\dots,q$,
\begin{equation}\label{eq:113}
 \big\| \psi_{\omega,0}^{(j)}-\alpha_{\omega,\varkappa}^{(j)} \psi_{\omega,\varkappa}^{(j)}\big\|
=\big\|\varphi_{\omega,\varkappa}^{(j)}\big\| 
\leq \frac{ 10e^{-\big(\gamma\big( E_{\omega,0}^{(j)}\big)-\frac{11}{5}\varepsilon\big)\frac{q}{2}}}{e^{-\frac{\varepsilon}{10}q}- e^{-\big(\gamma\big( b_{\omega,q}^{(j)}\big)-3\varepsilon\big)q}}
 \leq e^{-\big(\gamma\big( E_{\omega,0}^{(j)}\big)-\frac{13}{5}\varepsilon\big)\frac{q}{2}}.
\end{equation}

Pythagoras gives $\big|\alpha_{\omega,\varkappa}^{(j)}\big|^{2}+\big\|\varphi_{\omega,\varkappa}^{(j)}\big\|^{2}= 1$ and $\big\| \psi_{\omega,\varkappa}^{(j)}-\psi_{\omega,0}^{(j)}\big\|^{2}=2-2\big|\alpha_{\omega,\varkappa}^{(j)}\big|$, therefore \eqref{eq:113} implies 
$$
\big\|\psi_{\omega,\varkappa}^{(j)}- \psi_{\omega,0}^{(j)}\big\|^{2}<4-4\bigg(1-e^{-\big(\gamma\big( E_{\omega,0}^{(j)}\big)-\frac{13}{5}\varepsilon\big)q}\bigg)^{\frac{1}{2}}.
$$
 There exists $q_{4}=q_{4}(\varepsilon)$ such that 
 $1-\left(1-e^{-\big(\gamma\big(E_{\omega,0}^{(j)}\big)-\frac{13}{5}\varepsilon\big)q}\right)^{\frac{1}{2}}<e^{-\big(\gamma\big( E_{\omega,0}^{(j)}\big)-\frac{14}{5}\varepsilon\big)q}$, whenever $q>q_{4}$. In particular, 
 $\big\|\psi_{\omega,\varkappa}^{(j)}- \psi_{\omega,0}^{(j)}\big\|^{2} < 4e^{-\big(\gamma\big( E_{\omega,0}^{(j)}\big)-\frac{14}{5}\varepsilon\big)q}$ for $q>\max\{q_{1},q_{2},q_{3},q_{4}\}$. The triangle inequality gives
\begin{equation}\label{eq:117}
\big| \psi_{\omega,\varkappa}^{(j)}(x)\big| < 2e^{-\big(\gamma\big( E_{\omega,0}^{(j)}\big)-\frac{14}{5}\varepsilon\big)\frac{q}{2}} + \big| \psi_{\omega,0}^{(j)}(x)\big|. 
\end{equation}
 For any $j= 1,\dots,q$, if $\|x-\nu_{\omega,j,0}\|>Cn$, then \eqref{eq:117} and Lemma \ref{distancedecay} give 
\begin{equation}\label{eq:63}
\big| \psi_{\omega,\varkappa}^{(j)}(x)\big| < 2e^{-\big(\gamma\big( E_{\omega,0}^{(j)}\big)-\frac{14}{5}\varepsilon\big)\frac{q}{2}} +  e^{-\big(\gamma\big( E_{\omega,0}^{(j)}\big)-2\varepsilon\big)\|x-\nu_{\omega,j,0}\|_{q}} \leq 3e^{-\big(\gamma\big( E_{\omega,0}^{(j)}\big)-\frac{14}{5}\varepsilon\big)\|x-\nu_{\omega,j,0}\|_{q}}
\end{equation} whenever $q>\max\{q_{1},q_{2},q_{3},q_{4}\}$. 

Finally, there exists $q_{5}=q_{5}(\varepsilon)$ such that the right hand side of \eqref{eq:63} is bounded above by 
$e^{-\big(\gamma\big( E_{\omega,0}^{(j)}\big)-3\varepsilon\big)\|x-\nu_{\omega,j,0}\|_{q}}$ if  $q>q_{5}$. Take $q_{0}=\max_{k=1,\dots,5}q_{k}$ and $\nu_{\omega,j}=\nu_{\omega,j,0}$.  
\end{proof}

\section{Proposition 4.}\label{sec:4}

Let $H_{\omega}$ be a discrete Schr\"odinger operator with bounded i.i.d potential (I). Large deviations bounds for the norm of random matrix products go back to the work of Le Page \cite{LePage}. 
Our arguments rely on the following large deviation estimate on the entries of the transfer matrix: 
\begin{equation}\label{polynomials}
P_{\omega,[a,b]}(E) = 
\begin{cases} 
      \det(H_{\omega}\upharpoonright[a,b]-E), & a\leq b\\
       1, & a>b
\end{cases}
\end{equation}
 which can be found in the work of Tsay \cite{Tsay}, where large deviation bounds on the norm are used.

\begin{lemma}[\cite{Tsay}, Theorem 2]\label{matrixelements}
Let $H_{\omega}$ be a Schr\"odinger operator with bounded  i.i.d potential \emph{(I)}. For  any $\varepsilon>0$ there exists $c_{1}=c_{1}(\varepsilon)>0$ and $N_{1}=N_{1}({\varepsilon})$ such that for intervals $[a,b]\subset \mathbb{Z}$ with $N_{1}<b-a+1<\infty$ and $E\in K$, 
\begin{equation}\label{eq:118}
\PR\left\{\omega\in \Omega : \left|(b-a+1)^{-1}\log|P_{\omega,[a,b]}(E)| - \gamma(E) \right| < \varepsilon \right\} > 1-e^{-c_{1}(b-a+1)}.
\end{equation}
\end{lemma}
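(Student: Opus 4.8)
The estimate \eqref{eq:118} is Theorem 2 of \cite{Tsay}; here is the route I would take to prove it. \emph{Step 1: reduce to a matrix entry of the transfer matrix.} Write $n=b-a+1$ and let $\Phi_{\omega,[a,b]}(E)=\prod_{k=b}^{a}\big(\begin{smallmatrix}E-V_\omega(k)&-1\\1&0\end{smallmatrix}\big)$ be the $n$-step transfer matrix over $[a,b]$, a product of $n$ matrices of determinant $1$ (so $\Phi_{\omega,[a,b]}(E)\in SL(2,\R)$). A one-line induction on $n$ gives $P_{\omega,[a,b]}(E)=\pm\big(\Phi_{\omega,[a,b]}(E)\big)_{11}=\pm\langle \Phi_{\omega,[a,b]}(E)e_1,e_1\rangle$ with $e_1=(1,0)^{\intercal}$, so \eqref{eq:118} is a two-sided large deviation estimate, around $n\gamma(E)$, for a single entry of a product of i.i.d.\ $SL(2,\R)$ matrices. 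By translation invariance of the i.i.d.\ law we may take $[a,b]=[1,n]$. \emph{Step 2: the upper bound.} Since $|P_{\omega,[1,n]}(E)|\le\|\Phi_{\omega,[1,n]}(E)\|$, the classical large deviation bound for the norm of products of random matrices, $\PR\big(n^{-1}\log\|\Phi_{\omega,[1,n]}(E)\|>\gamma(E)+\varepsilon\big)<e^{-cn}$ (Le Page \cite{LePage}, valid since $\gamma(E)>0$ by Furstenberg's theorem \cite{Furstenberg}, and — crucially — uniform in $E\in K$), yields the upper half of \eqref{eq:118} off an event of exponentially small probability.

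\emph{Step 3: the lower bound — the substantive part.} Split $[1,n]$ into $I_1=[1,\lfloor n/2\rfloor]$ and $I_2=[\lfloor n/2\rfloor+1,n]$; the transfer matrices $\Phi^{(1)}=\Phi_{\omega,I_1}(E)$ and $\Phi^{(2)}=\Phi_{\omega,I_2}(E)$ are independent and $\Phi_{\omega,[1,n]}(E)=\Phi^{(2)}\Phi^{(1)}$. Factor
\[
\langle \Phi_{\omega,[1,n]}(E)e_1,e_1\rangle=\big\langle \Phi^{(1)}e_1,\ (\Phi^{(2)})^{\intercal}e_1\big\rangle=\big\|\Phi^{(1)}e_1\big\|\cdot\big\|(\Phi^{(2)})^{\intercal}e_1\big\|\cdot\langle u,w\rangle ,
\]
where $u,w$ are the corresponding unit vectors; $u$ depends only on $I_1$ and $w$ only on $I_2$, hence they are independent. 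The Le Page large deviation bound for the action of the cocycle on a \emph{fixed} vector, applied to the cocycle and to its transpose cocycle (a product of the transposed matrices, hence with the same Lyapunov exponent $\gamma(E)$), gives $\|\Phi^{(1)}e_1\|,\|(\Phi^{(2)})^{\intercal}e_1\|\ge e^{(\gamma(E)-\varepsilon)n/2}$ off exponentially small events, uniformly in $E$ and in the starting vector.

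It remains to keep $|\langle u,w\rangle|$ from being too small. Here I would invoke the Hölder regularity of the Furstenberg measures: there exist $\tau=\tau(K)>0$ and $C=C(K)$ with $\sup_{E\in K}\sup_{x}\nu_E\big(B(x,r)\big)\le Cr^{\tau}$, and, more to the point, once $n$ is large the law of the finite-step direction $u$ (resp.\ $w$) is Hölder with the same exponent, uniformly in $E$ and in the initial vector. Conditioning on $I_2$ and using that $u$ avoids the $\delta$-neighbourhood of $w^{\perp}$ with probability $\ge 1-C\delta^{\tau}$, the choice $\delta=e^{-\varepsilon n/2}$ gives $|\langle u,w\rangle|\ge e^{-\varepsilon n/2}$ off an event of probability $\le Ce^{-\tau\varepsilon n/2}$. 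Multiplying the three bounds yields $|P_{\omega,[1,n]}(E)|\ge e^{(\gamma(E)-2\varepsilon)n}$ off an event of probability $\le e^{-c'n}$; relabelling $\varepsilon$ and intersecting with Step 2 completes the proof, with $N_1(\varepsilon)$ taken large enough that all the (uniform-in-$E$) large deviation and regularity estimates are in force.

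\emph{Main obstacle.} The delicate point throughout is \emph{uniformity in $E\in K$}: the large deviation rate, the Hölder exponent and constant of the Furstenberg measure, and the threshold $N_1$ must not degenerate as $E$ ranges over the compact interval. This is precisely what requires the quantitative Le Page machinery — a spectral gap for the transfer operator on a space of Hölder functions on $\mathbb{P}^1$, with constants controlled uniformly for the $E$-dependent family of cocycles; Furstenberg's theorem supplies the positivity $\gamma(E)>0$ the machinery needs, and boundedness of the potential keeps the family of cocycles in a compact set. The anti-concentration step in Step 3 — ruling out that the two relevant directions are nearly orthogonal — is the other place where one genuinely uses the non-atomicity/regularity of the stationary measure rather than a soft argument. (An alternative derivation of \eqref{eq:118} would go through the Thouless formula, writing $n^{-1}\log|P_{\omega,[1,n]}(E)|=\int\log|E-\lambda|\,dN_{\omega,n}(\lambda)$ for the empirical eigenvalue measure $N_{\omega,n}$ and combining a large deviation principle for $N_{\omega,n}$ with a Wegner-type bound on eigenvalues near $E$; this seems more cumbersome, so I would keep the transfer-matrix route as primary.)
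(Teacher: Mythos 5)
The paper does not prove this lemma at all: it is imported verbatim as Theorem~2 of \cite{Tsay} (the text before the statement only notes that Tsay's proof proceeds via large deviation bounds on norms of random matrix products), so there is no in-paper argument to compare yours against. Your reconstruction is a sound sketch of the standard route behind such results and is consistent with the cited source: the identification $P_{\omega,[a,b]}(E)=\pm\big(\Phi_{\omega,[a,b]}(E)\big)_{11}$ is correct, the upper half of \eqref{eq:118} does follow from the uniform-in-$E$ Le Page large deviation estimate for $\|\Phi_{\omega,n}(E)\|$, and the splitting $\langle\Phi^{(2)}\Phi^{(1)}e_{1},e_{1}\rangle=\|\Phi^{(1)}e_{1}\|\,\|(\Phi^{(2)})^{\intercal}e_{1}\|\,\langle u,w\rangle$ with independent halves is the right way to get the lower bound, the transposed product having the same Lyapunov exponent. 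Be aware, though, that the two load-bearing inputs in your Step~3 are themselves the substance of the quantitative theory rather than off-the-shelf trivia: (i) the large deviation bound for $\|\Phi^{(1)}e_{1}\|$ must hold uniformly over the initial direction and over $E\in K$, and (ii) the claim that the law of the finite-step direction $u_{n}$ is H\"older-regular uniformly in $n\ge n_{0}$, in the initial vector and in $E\in K$ (so that $\PR(|\langle u,w\rangle|\le\delta)\le C\delta^{\tau}$ after conditioning on $w$) requires the uniform spectral gap for the family of transfer operators on H\"older functions on $\mathbb{P}^{1}$, together with Furstenberg positivity and non-degeneracy of $\mu$; you name these correctly but they must ultimately be quoted from Le Page/Guivarc'h-type results (or from \cite{Tsay} itself), at which point one is essentially re-deriving the cited theorem. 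As a blind reconstruction of where \eqref{eq:118} comes from, your outline is accurate; as a self-contained proof it is a sketch whose hard steps are delegated to the same machinery the citation encapsulates.
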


Let $\#[a,b]\leq q$ and $P_{\omega,q,[a,b]}$ be the function defined in \eqref{polynomials} but with $H_{\omega}$ replaced by $H_{\omega,q}$. Let $\widetilde{G}_{\omega,[a,b],E,q} = (H_{\omega,q}\upharpoonright[a,b]-E)^{-1}$.  For any $E\notin\sigma(H_{\omega,q}\upharpoonright[a,b])$ and $(k_{1},k_{2})\in[a,b]^{2}$, Cramer's rule implies
\begin{equation}\label{eq:170}
|\widetilde{G}_{\omega,[a,b],E,q}(k_{1},k_{2})| =
\begin{cases} 
      \frac{|P_{\omega,q,[a,k_{1}-1]}(E)P_{\omega,q,[k_{2}+1,b]}(E)|}{|P_{\omega,q,[a,b]}(E)|}, & k_{1}\leq k_{2} \\
       \frac{|P_{\omega,q,[a,k_{2}-1]}(E)P_{\omega,q,[k_{1}+1,b]}(E)|}{|P_{\omega,q,[a,b]}(E)|}, & k_{2}\leq k_{1}
\end{cases}.
\end{equation} 

Since $\#[a,b]\leq q$, the diagonal of $H_{\omega,q}\upharpoonright [a,b]$ is i.i.d.

\begin{proof}[Proof of Proposition \ref{c0}]
We begin by showing that for any $\varepsilon>0$ there exists $c_{0}=c_{0}(\varepsilon)>0$ and $N_{0}=N_{0}(\varepsilon)$ such that if $n>N_{0}$ then  for any $E\in K$ and $x\in\Z_{q}$, 
\begin{equation}\label{smallprobsing}
\PR\{\omega\in \Omega:x\in\text{Res}_{\omega}(E,\varepsilon,n)\}\leq e^{-c_{0} n}.
\end{equation}
First note that $\widetilde{G}_{\omega,[k-n,k+n],E,q}(k,k\pm n) = G_{\omega,0,B_{n}(\overline{k}_{q}),E,q}(\overline{k}_{q},\overline{k\pm n}_{q})$ so we can use \eqref{eq:170} together with Lemma \ref{matrixelements} to obtain estimates on the entries $(\overline{k}_{q},\overline{k\pm n}_{q})$ of the Green function on the circle.

Indeed, let $Q([a,b],E,\varepsilon)$ denote the event in \eqref{eq:118}. Fix $k\in[0,q-1]$,$E\in K$ and $\varepsilon>0$. Denote 
$Q_{0}(\varepsilon) = Q([k-n,k-1],E,\varepsilon)$, $Q_{1}(\varepsilon)=Q([k+1,k+n],E,\varepsilon)$ and $Q_{2}(\varepsilon)=Q([k-n,k+n],E,\varepsilon)$.
Take $\omega\in \widetilde{Q}(\varepsilon)=Q_{0}(\varepsilon)\cap Q_{1}(\varepsilon)\cap Q_{2}(\varepsilon)$, then 
\begin{equation}\label{eq:38}
|P_{\omega,q,[k-n,k-1]}(E)|,|P_{\omega,q,[k+1,k+n]}(E)| < e^{(\gamma(E)+\varepsilon)n}
\text{ and }
|P_{\omega,q,[k-n,k+n]}(E)| > e^{(\gamma(E)-\varepsilon)(2n+1)}
\end{equation}
and assuming $N_{0}\geq 1$, \eqref{eq:170} implies 
$$
|G_{\omega,0,B_{n}(\overline{k}_{q}),E,q}(\overline{k}_{q},\overline{k\pm n}_{q})|=|\widetilde{G}_{\omega,[k-n,k+n],E,q}(k,k\pm n)| <  e^{-n(\gamma(E)-3\varepsilon)}e^{\varepsilon}<e^{-n(\gamma(E)-4\varepsilon)}.
$$
 Thus $\widetilde{Q}\left(\frac{\varepsilon}{4}\right)\subseteq \{\omega\in \Omega : x\in\text{Res}^{\mathsf{c}}_{\omega}(E,\varepsilon,n)\}$ and  
$\PR\big(\widetilde{Q}\big(\frac{\varepsilon}{4}\big)\big)\leq \PR\{\omega\in \Omega : x\in\text{Res}^{\mathsf{c}}_{\omega}(E,\varepsilon,n)\}$.

Let $c_{1}=c_{1}(\frac{\varepsilon}{4})$ and $N_{1}=N_{1}(\frac{\varepsilon}{4})$ be the constants from Lemma \ref{matrixelements}. Fix $c_{0}=c_{0}(\varepsilon) < c_{1}$, there exists $N_{2}=N_{2}(\varepsilon)$  such that if $n>N_{2}$ then $ 1 -e^{-c_{1}(2n+1)}-2e^{-c_{1}n} >1-e^{-c_{0}n}$ and in particular 
 $\PR(\widetilde{Q}(\frac{\varepsilon}{4}))>1-e^{-c_{0}n}$ whenever $n>N_{0}(\varepsilon)=\max\{N_{1}(\frac{\varepsilon}{4}),N_{2}(\varepsilon)\}$.

For any $ E\in K$, $x\in\Z_{q}$ and $n>N_{0}$, \eqref{smallprobsing} gives 
\begin{equation}\label{eq:51}
\PR\{
\omega\in \Omega : E\in\text{Res}_{\omega}^*(x,\varepsilon,n)\}=\PR\{\omega\in \Omega:x\in\text{Res}_{\omega}(E,\varepsilon,n)\}\leq e^{-c_{0} n}
\end{equation} where 
$\text{Res}_{\omega}^*(x,\varepsilon,n)  = \{ E\in K : x\in \text{Res}_{\omega}(E,\varepsilon,n)\}$, a set which is made up of at most $2n+1$ closed intervals. Indeed, for any $x\in\Z_{q}$ the function $|G_{\omega,0, B_{n}(x),E,q}(x,x\pm \overline{n}_{q})|$ is, as a function of the energy, the absolute value of a rational function where the numerator has degree $2n$ and the denominator $2n+1$. The set is bounded and each singularity is located in the interior of a closed interval. 

We have $\omega\in Q_{\text{NR}}^{\mathsf{c}}(\varepsilon,n,q)$ if and only if there exists $x,y\in\Z_{q}$ and $ E\in K$ with $\|x-y\|_{q}>2n$ such that $x,y\in\text{Res}_{\omega}(E,\varepsilon,n)$ and in particular if and only if $\text{Res}_{\omega}^*(x,\varepsilon,n)\cap \text{Res}_{\omega}^*(y,\varepsilon,n)\neq \varnothing$. Hence
\begin{equation}\label{eq:49}
Q_{\text{NR}}^{\mathsf{c}}(\varepsilon,n,q) = \bigcup_{\substack{x,y\in\Z_{q} ; \\ \|x-y\|_{q}>2n}} \{\omega\in \Omega :  \text{Res}_{\omega}^*(x,\varepsilon,n)\cap  \text{Res}_{\omega}^*(y,\varepsilon,n)\neq \varnothing \}.
\end{equation}
Let $E_{x,m}^{(i)}(\omega)$ denote the $i$-th edge of the $m$-th interval of $\text{Res}_{\omega}^*(x,\varepsilon,n)$. Fix $\|x-y\|_{q}>2n$ and note 
\begin{equation*}
\{\omega\in \Omega: \text{Res}_{\omega}^*(x,\varepsilon,n)\cap \text{Res}_{\omega}^*(y,\varepsilon,n)\neq \varnothing \} = \bigcup_{\substack{i=1,2 ; \\ 1 \leq m \leq 2n+1}} (Q_{x,y}(i,m)\cup Q_{y,x}(i,m))
\end{equation*}
where $Q_{x,y}(i,m) = \{\omega\in \Omega : E_{x,m}^{(i)}(\omega) \in  \text{Res}_{\omega}^*(y,\varepsilon,n)\}$. By stationarity and the union bound,
\begin{equation}\label{intersection}
\PR \{\omega\in \Omega : \text{Res}_{\omega}^*(x,\varepsilon,n)\cap \text{Res}_{\omega}^*(y,\varepsilon,n)\neq \varnothing \} 
\leq \sum_{\substack{i=1,2 ; \\ 1 \leq m \leq 2n+1}} 2\PR(Q_{x,y}(i,m)).
\end{equation}

For $v\in\Omega_{0}^{2n+1}$ let $H[v]$ denote a tridiagonal matrix with $1$'s on the off diagonals and  $v$ on the main diagonal. Let $G_{v,E}=(H[v]-E)^{-1}$ with indices in $[0,2n]^{2}$ and $R(v,\varepsilon,n)=\{ E\in K:|G_{v,E}(n,n\pm n)|>e^{-(\gamma(E)-\varepsilon)n} \}$.  Let $E(v,\varepsilon,n)$ be the $i$-th edge of the $m$-th interval in $R(v,\varepsilon,n)$ and define 
$$Q_{1}=\left\{\begin{pmatrix} v_{1} \\ v_{2}\end{pmatrix}, v_{1},v_{2}\in\Omega^{M}: E(v_{1},\varepsilon,n)\in R(v_{2},\varepsilon,n)\right\}.$$ 

By stationarity, for any $\|x'-y'\|_{q}>2n$, we have $\PR(Q_{x,y}) = \PR(Q_{x',y'})$ so we pick $x=\overline{n}_{q}$ and $y=\overline{3n+1}_{q}$ then indeed $\|x-y\|_{q}>2n$ and $Q_{x,y}(i,m)= \Omega_{0}^{\N}\times Q_{1}\times \Omega_{0}^{\N}$. By consistency of the distribution we have $\PR(Q_{x,y}(i,m))= \mu^{2(2n+1)}(Q_{1})$. Let  $V\sim\mu^{2(2n+1)}$ be a random variable, Fubini and \eqref{eq:51} imply
\begin{equation}\label{eq:48}
\PR(Q_{x,y}(i,m)) = \E_{\mu^{2(2n+1)}}[\1_{Q_{1}}(V)] =\E_{\mu^{2n+1}}[ \E_{\mu^{2n+1}}[\1_{Q_{1}}(V)]]< \E_{\mu^{2n+1}}[ e^{-c_{0}n}] =e^{-c_{0}n}
\end{equation}
 where we first fixed the edge by fixing the first $2n+1$ random variables and then we applied \eqref{smallprobsing} and subsequently integrated over the second $2n+1$ random variables. 

Taking the union bound on \eqref{eq:49} then applying \eqref{intersection} and \eqref{eq:48} gives, for $n>N_{0}$, 
\begin{equation}\label{unionbound}
\PR(Q_{\text{NR}}^{\mathsf{c}}(\varepsilon,n,q))
\leq \sum_{\substack{\|x-y\|_{q}>2n; \\ \|x\|_{q},\|y\|_{q}\leq  \frac{q}{2}}} \sum_{\substack{i=1,2 ; \\ 1 \leq m \leq 2n+1}} 2\PR(Q_{x,y}(i,m)) \leq 12q^{2}ne^{-c_{0}n}.
\end{equation}  Let $c<c_{0}$ and increase $N_{0}$ so that for $n>N_{0}$, we have $12q^{2}ne^{-c_{0}n} < q^{2}e^{-cn}$. 
\end{proof}

\section{Eigenvalue separation.}\label{sec:5}

Let $c_{0}({\varepsilon})$ be the constant from the statement of Proposition \ref{c0}. Then the lower bound on the probability in the conclusion of Proposition \ref{c0} is of use if $n > C'(\varepsilon)\log q$ where $C'(\varepsilon)>\frac{3}{c_{0}(\varepsilon)}$, since we assume $n<\frac{q}{2}$. In Lemma \ref{vectorsep} and Proposition  \ref{eigensep} we fix a constant $\varepsilon_{1}>0$ for which $2\varepsilon_{1}<\min\gamma$ so that we can use Lemma \ref{distancedecay} and such that $c_{0}(\varepsilon_{1})\leq 3$, which is possible since $c_{0}(\varepsilon)\rightarrow 0$. This choice of $\varepsilon_{1}$ implies $C'(\varepsilon_{1})>1$ and therefore we may assume $q<e^{n}$. 

The following lemma was given in \cite{BourgainSpacings} for the case of the Bernoulli shift with Dirichlet boundary conditions and is the first ingredient for the proof of Proposition \ref{eigensep}. 

\begin{lemma}\label{vectorsep}
Let $H_{\omega}$ be a Schr\"odinger operator with bounded  i.i.d potential \emph{(I)}. There exists $A_{1},N_{0},A$ and $\eta,\beta>0$ such that if $N_{0}<n<q$, then for any $\omega\in  Q_{\emph{NR}}(\varepsilon_{1},n,q)$ if there exists eigenvalues $E\neq E'$ of $H_{q}[\omega,0]$ which satisfy $|E-E'|<e^{-\beta n}$ and if the maxima $\nu,\nu'\in\Z_{q}$ of the corresponding eigenvectors satisfy $\|\nu-\nu'\|_{q}<\frac{q}{A_{1}}$, then $\|\nu-\nu'\|_{q} > \eta \log\frac{1}{|E-E'|}$.
\end{lemma}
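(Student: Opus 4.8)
The plan is to follow the transfer‑matrix method of \cite{BourgainSpacings}, adapted from the Dirichlet to the periodic setting. Since $H_{q}[\omega,0]$ is a real symmetric matrix I take $\psi,\psi'$ real and $\ell^2$‑normalised, with $\nu,\nu'$ sites at which $|\psi|,|\psi'|$ are maximal; set $L=\|\nu-\nu'\|_{q}$ and $\delta=|E-E'|$. The target is a lower bound $\delta\ge e^{-C_{\ast}L}$ for a constant $C_{\ast}=C_{\ast}(\varepsilon_{1},K)$ (together with $L\ge 1$), which gives the lemma with $\eta=1/C_{\ast}$. Two inputs are used repeatedly: first, from $\omega\in Q_{\text{NR}}(\varepsilon_{1},n,q)$ and $2\varepsilon_{1}<\min\gamma$, Lemma~\ref{distancedecay} gives $|\psi(x)|\le e^{-(\min\gamma-2\varepsilon_{1})\|x-\nu\|_{q}}$ and $|\psi'(x)|\le e^{-(\min\gamma-2\varepsilon_{1})\|x-\nu'\|_{q}}$ once the relevant distance exceeds $Cn$; second, combining this with $\|\psi\|=\|\psi'\|=1$ gives $|\psi(\nu)|,|\psi'(\nu')|\ge c_{0}n^{-1/2}$. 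The hypothesis $\|\nu-\nu'\|_{q}<q/A_{1}$ (with $A_{1}$ large and $n<q/2$) guarantees that $\nu$, $\nu'$, the short arc between them, and the balls $B_{2Cn}(\nu),B_{2Cn}(\nu')$ all lie inside a single arc $W_{0}\subset\Z_{q}$ of length $\ll q$, on which the three‑term recurrence and its transfer matrices behave exactly as on a sub‑interval of $\Z$; I identify $\nu\mapsto 0$ and $\nu'\mapsto L$.

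\textbf{The Wronskian identity and the choice of window.} For the two eigenvector equations write $W(k)=\psi(k)\psi'(k-1)-\psi(k-1)\psi'(k)$; a direct computation from $\psi(k+1)=(E-V_{\omega,q}(k))\psi(k)-\psi(k-1)$ and the analogous relation at $E'$ gives the telescoping identity $W(b+1)-W(a)=(E-E')\sum_{k=a}^{b}\psi(k)\psi'(k)$. Since $\big|\sum_{k=a}^{b}\psi(k)\psi'(k)\big|\le\|\psi\|\,\|\psi'\|=1$, every choice $a<b$ yields $\delta\ge|W(b+1)-W(a)|$. I would take $a$ just to the left of $\nu$ and $b$ just to the left of $\nu'$, so that $W(a)$ and $W(b+1)$ are $2\times2$ determinants pairing a ``large'' data vector of one eigenvector with a ``small'' data vector of the other, and then show — using the eigenvalue (periodicity) conditions, which pin the data of $\psi$ at $\nu$ as the eigendirection of the full‑period transfer matrix at $E$ and likewise the data of $\psi'$ at $\nu'$ at $E'$, together with the $Q_{\text{NR}}$ non‑resonance of the sites between the two centres — that $|W(b+1)-W(a)|$ cannot itself be smaller than of order $n^{-1/2}e^{-(\min\gamma)L}$. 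Rearranging, and absorbing the polynomial factor using $q<e^{n}$, gives $\delta\ge e^{-C_{\ast}L}$ for $C_{\ast}$ slightly larger than $\min\gamma$ plus the constant $C_{0}(K)$ produced when the bookkeeping above is carried out via transfer matrices over the length‑$L$ window; this is the claimed inequality.

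\textbf{The near‑coincident regime.} The preceding argument presumes $L>2Cn$, so that Lemma~\ref{distancedecay} applies at both centres with room to spare. For $L\le 2Cn$ the claim must be obtained separately: one uses that $\nu\in\text{Res}_{\omega}(E,\varepsilon_{1},n)$ and $\nu'\in\text{Res}_{\omega}(E',\varepsilon_{1},n)$ (established inside the proof of Lemma~\ref{distancedecay}), compares the Green's functions $G_{\omega,0,B_{n}(\nu),E,q}$ and $G_{\omega,0,B_{n}(\nu),E',q}$ by the resolvent identity, and argues that $\delta<e^{-\beta n}$ with $\beta$ large is incompatible with $\omega\in Q_{\text{NR}}$ unless $L$ already exceeds the logarithmic threshold; taking $\beta$ larger than $2CC_{\ast}$ glues the two regimes together, and $N_{0}$ is chosen large enough for all the finite‑volume and continuity estimates above to hold.

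\textbf{Main obstacle.} The genuine difficulty is the middle step: turning the telescoping identity into a \emph{lower} bound on $\delta$ rather than a relation that is vacuously consistent with $\psi\perp\psi'$. This forces one to track the two‑dimensional data of the eigenvectors across the window between $\nu$ and $\nu'$ and to exclude that $\psi$ and $\psi'$ are too nearly proportional there, and this is precisely where $\omega\in Q_{\text{NR}}(\varepsilon_{1},n,q)$ is used essentially — not merely through the exponential decay of Lemma~\ref{distancedecay} but through the impossibility, for such $\omega$, of a single site supporting two independent exponentially small branches of a solution at scale $n$. A secondary nuisance is the mismatch between the resonance scale $n$ and the localisation scale $Cn$, which is what makes the near‑coincident regime $L\lesssim n$ need its own treatment.
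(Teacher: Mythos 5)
There is a genuine gap, and it sits exactly where you flagged it: the ``middle step'' that converts the Wronskian telescoping identity into a lower bound on $\delta=|E-E'|$ is never carried out, and the mechanism you sketch for it is not the one that works. Your plan is to lower-bound the \emph{difference} $|W(b+1)-W(a)|$ at two points adjacent to the centres, where each of $W(a)$ and $W(b+1)$ is itself a product of a large value of one eigenvector with an exponentially small value of the other; to make that useful you must rule out near-cancellation between two exponentially small quantities, and your appeal to an ``essential'' use of $Q_{\text{NR}}$ (beyond the decay of Lemma \ref{distancedecay}) is only a heuristic -- no inequality is proved. The paper closes this gap differently and more simply: it bounds a \emph{single} value $|W(\nu)|$ from below. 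One introduces $w(x)=\psi'(\nu)\psi(x)-\psi(\nu)\psi'(x)$, which vanishes at $\nu$ and satisfies $\|(H_{q}[\omega,0]-E)w\|\le\delta$; orthogonality $\psi\perp\psi'$ together with $|\psi(\nu)|\gtrsim M^{-1/2}$ (with $M\sim C_{1}n$) and the smallness of both eigenvectors off the arc $\Lambda$ forces $\max_{\Lambda}|w|\gtrsim (M|\Lambda|)^{-1/2}$, and the crude three-term-recurrence growth bound $|w(\nu+k)|\le(2+2S_{1})^{|k|}(|w(\nu+1)|+\delta)$ then yields $|W(\nu)|=|w(\nu+1)|\ge S^{-|\Lambda|}-\delta$ for some $S>1$. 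No non-resonance enters here: $Q_{\text{NR}}$ is used only through Lemma \ref{distancedecay}. The conclusion then follows by combining $|W(x)|\ge|W(\nu)|-\|x-\nu\|_{q}\delta$ (from the telescoping identity) with the decay bound $|W(x)|\le 2e^{-(\min\gamma-2\varepsilon_{1})(\|x-\nu\|_{q}-1)}$ at a point $x$ chosen with $\|x-\nu\|_{q}\approx M+\frac{|\Lambda|\log S}{\min\gamma-2\varepsilon_{1}}$; the hypothesis $\|\nu-\nu'\|_{q}<q/A_{1}$ (and $n<q/A$) is exactly what guarantees such an $x$ exists on the circle. This gives $\delta\gtrsim S^{-|\Lambda|}/|\Lambda|$, i.e.\ $\log\frac1\delta<\eta'|\Lambda|$.

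A second, smaller point: your separate ``near-coincident regime'' $L\lesssim n$ (via a resolvent comparison of Green functions, again unproven) is unnecessary. Because the paper's estimate is phrased in terms of $|\Lambda|=\|\nu-\nu'\|_{q}+2M+O(1)$ rather than $L$ alone, it is uniform in $L$; the buffer term $2M+1=O(n)$ is then absorbed using the hypothesis $\delta<e^{-\beta n}$ with $\beta$ chosen proportional to $C_{1}/\eta'$, which is precisely how the constants $\beta$ and $\eta$ of the statement arise. So the correct skeleton is there (Wronskian, localisation at both centres, $|\psi(\nu)|\gtrsim n^{-1/2}$), but as written the proof is incomplete: the key lower bound -- the analogue of the paper's Claim on $|W(\nu)|$ -- is missing, and the route you propose for it would additionally have to confront the cancellation issue that the paper's auxiliary-function argument is designed to avoid.
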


\begin{proof}
Assume $e^{-\frac{1}{2}n\min\gamma}<\frac{1}{2}$, take $\omega\in  Q_{\text{NR}}(\varepsilon_{1},n,q)$ and let $(E,\psi),(E',\psi')$ be eigen-pairs of $H_{q}[\omega,0]$  with corresponding maxima $\nu,\nu'\in\Z_{q}$ and $E\neq E'$. By Lemma \ref{distancedecay} there exists $C$ such that  for $  \|x-\nu\|_{q}>Cn$, 
\begin{equation}\label{eq:180}
|\psi(x)|\leq e^{-(\min\gamma - 2\varepsilon_{1})\|x-\nu\|_{q}}.
\end{equation}
The same is true of the eigenvector $\psi'$ with $\nu$ replaced by $\nu'$.

Let $M=\left\lceil C_{1}n\right\rceil$ where $C_{1}=C+C_{2}$ and choose $C_{2}\geq 0$ so that $|\psi(x)|,|\psi'(x')|<e^{-10n}$, for $\|x'-\nu'\|_{q},\|x-\nu\|_{q}> M$. Let $\Lambda\subset \Z_{q}$ be the smallest arc containing both $\nu,\nu'\in\Z_{q}$ and $M$ extra points in both directions, namely $\Lambda=\{\nu-\overline{\alpha M}_{q},\nu-\overline{\alpha(M-1)}_{q},\dots,\nu'+\overline{\alpha M}_{q}\}$ for some $\alpha=\pm1$. Denote $\delta=|E-E'|$ and $|\Lambda|=\#\Lambda$. 

The present lemma follows from the following claim. 

\begin{claim}\label{claim10}
There exists $N_{0}$ and $\eta'>0$ such that if $n>N_{0}$ then $\log\frac{1}{\delta} < \eta' |\Lambda|$.
\end{claim}

Let $n>N_{0}$ then Claim \ref{claim10} implies $\|\nu-\nu'\|_{q}>\frac{1}{\eta'}\log\frac{1}{\delta}-(2M+1)$. The lemma follows after we show that $\delta<e^{-\beta n}$ for some $\beta>0$ and $n>N_{0}$ implies $\frac{1}{\eta'}\log\frac{1}{\delta}-(2M+1)>\eta\log\frac{1}{\delta}$. We may assume $2M+1\leq 4C_{1}n$ since $C\geq 2$ thus $\frac{1}{\eta'}\log\frac{1}{\delta}-(2M+1)> \frac{1}{\eta'}\log\frac{1}{\delta}-4C_{1}n$, so it's enough to ensure 
$\frac{1}{\eta'}\log\frac{1}{\delta}-4C_{1}n>\eta\log\frac{1}{\delta}$
which is equivalent to $\delta<e^{-\beta n}$ where $\beta = \frac{2C_{1}}{\eta'}$ and we let $\eta=\frac{1}{2\eta'}$.

To establish Claim \ref{claim10} we need to relate the distance between the two eigenvalues $\delta$ to the size of the arc $\Lambda$. This is achieved via the Wronskian, given by 
$$
W(x)=\psi'(x)\psi(x+1)-\psi(x)\psi'(x+1)
$$
for $x\in\Z_{q}$.

By expanding the Wronskian as a determinant and factoring out a transfer matrix, we  obtain 
$W(x)=(E-E')\psi(x)\psi'(x)+W(x-1)$ for any $x\in\Z_{q}$. The Cauchy-Schwarz inequality gives 
$$
\sum_{x\in\mathbb{Z}_{q}}|W(x)-W(x-1)|=|E-E'|\langle |\psi|,|\psi'| \rangle\leq \delta
$$
where $|\psi|$ denotes the vector obtained by taking the modulus of the components of $\psi$.  Applying the reverse triangle inequality we obtain 
$|W(x+1)|\geq |W(x)|-\delta$ and $|W(x-1)|\geq |W(x)|-\delta$. In particular, for $0\leq k\leq \lfloor\frac{q}{2}\rfloor$ we have $|W(\nu+k)|\geq |W(\nu)|-k\delta$ and $|W(\nu-k)|\geq |W(\nu)|-k\delta$ whence for any $x\in\Z_{q}$, 
\begin{equation}\label{eq:10}
|W(x)|\geq |W(\nu)|-\|x-\nu\|_{q}\delta.
\end{equation}

Localisation \eqref{eq:180} gives 
$|W(x)|\leq |\psi({x+1})|+|\psi(x)|\leq 2e^{- (\min\gamma - 2\varepsilon_{1})(\|x-\nu\|_{q}-1)}$
 and \eqref{eq:10} gives 
\begin{equation}\label{eq:184}
\|x-\nu\|_{q}\delta + 2e^{- (\min\gamma - 2\varepsilon_{1})(\|x-\nu\|_{q}-1)} \geq  |W(\nu)|
\end{equation}
for $\|x-\nu\|_{q}>M+1$. This motivates 
 
\begin{claim}\label{claim8}
There exist $S>1$ and $N_{1}$ such that if $n>N_{1}$, then 
$|W(\nu)|>  S^{-|\Lambda|} - \delta$.
\end{claim}
\begin{proof}[Proof of Claim \ref{claim8}]
Define the function $w(x)=\psi'(\nu)\psi(x)-\psi(\nu)\psi'(x)$ and let $S_{1}=\sup_{(\omega,q)}\|H_{\omega,q}\|$. 
For any $x\in\mathbb{Z}_{q}$ we show 
\begin{equation}\label{eq:181}
|W(\nu)|\geq  |w(x)|( 2+2S_{1})^{-\|x-\nu\|_{q}} - \delta.
\end{equation}

Indeed, we have $\|(H_{q}[\omega,0]-E)w\|\leq \delta$ which implies, for each $x\in\Z_{q}$, 
$$|w(x+1)+(V_{\omega,q}(x)-E)w(x)+w(x-1)|\leq \delta$$ then both triangle inequalities give 
$$
|w(x+1)|\leq \delta+|V_{\omega,q}(x)-E||w(x)|+|w(x-1)|
$$
and strong induction whilst noting that $w(\nu)=0$ and $|E|\leq S_{1}$ (since $E\in\sigma(H_{\omega,q})$), gives 
\begin{equation}\label{eq:5}
|w({\nu+k})|\leq (2+2S_{1})^{|k|}(|w(\nu+1)|+\delta)
\end{equation}
for any $|k| \leq  \lfloor \frac{q}{2}\rfloor$. 

Since $\|x-\nu\|_{q} \leq \lfloor \frac{q}{2}\rfloor$ and either $x = \nu+\overline{\|x-\nu\|_{q}}_{q}$ or $x = \nu-\overline{\|x-\nu\|_{q}}_{q}$, it follows from \eqref{eq:5}, that $|w({x})|\leq ( 2+2S_{1})^{\|x-\nu\|_{q}}(|w(\nu+1)|+\delta).$ \eqref{eq:181} follows after substituting  $w(\nu+1)=W({\nu})$. 

We show that for large enough $n$ we have 
\begin{equation}\label{eq:0}
\max_{x\in\Lambda}|w(x)|>\frac{1}{\sqrt{8M|\Lambda|}}.
\end{equation}

Indeed, since $q<e^{n}$, let $N_{1}\geq (\min\gamma)^{-1}\log(4)$ be such that $(q-(2M+1))e^{-20n}<\frac{1}{2}$ for $n>N_{1}$ then
$$
1 = \sum_{\|x-\nu\|_{q}\leq M} + \sum_{\|x-\nu\|_{q} > M} |\psi(x)|^{2}
< 2M|\psi(\nu)|^2+(q-(2M+1))e^{-20n}
$$
implies $|\psi(\nu)| > \frac{1}{2\sqrt{M}}$ and, since $\psi\perp\psi'$ and $|\psi(x)|,|\psi'(x)|<e^{-10n}$ for any $x\in {\Lambda}^{\mathsf{c}}$, 
$$
\frac{1}{4M}<|\psi(\nu)|^2
< \|w\|^2 \leq |{\Lambda}^{\mathsf{c}}|(2e^{-20n})+|\Lambda|\max_{x\in\Lambda}|w(x)|^2.
$$
Then substituting $|{\Lambda}^{\mathsf{c}}|= q-|\Lambda|$ gives
$$
\max_{x\in\Lambda}|w(x)|^2 
> \frac{1}{|\Lambda|}\left(\frac{1}{4M}-2e^{-20n}|{\Lambda}^{\mathsf{c}}|\right) 
=\frac{1}{4M|\Lambda|}-2e^{-20n}\frac{q-|\Lambda|}{|\Lambda|}
$$
and increase $N_{1}$ such that
$\left|2e^{-20n}\frac{q-|\Lambda|}{|\Lambda|}\right|<4qe^{-20n}<\frac{1}{8M|\Lambda|}$
for $n>N_{1}$.  \eqref{eq:0} follows for $n>N_{1}$.

Assume $n>N_{1}$ and let $x_{0}=x_{0}(n)$ be the maximum of $w$. Applying \eqref{eq:0} to \eqref{eq:181} gives
\begin{equation}\label{eq:183}
|W(\nu)|+\delta > \frac{1}{\sqrt{8M|\Lambda|}}( 2+2S_{1})^{-\|x_0-\nu\|_{q}} > \frac{1}{\sqrt{8M|\Lambda|}}( 2+2S_{1})^{-|\Lambda|}.
\end{equation}

Finally, we show that there exists $S>0$ such that for large enough $n$,  
the RHS of \eqref{eq:183} is greater than $S^{-|\Lambda|}$, which concludes the proof of the claim. It suffices to show 
$\log S>\frac{1}{2}|\Lambda|^{-1}\log(8M|\Lambda|)+\log( 2+2S_{1})$. Indeed, $M < |\Lambda|$ implies that the first summand on the right tends to $0$ as $n\rightarrow\infty$, so increase $N_{1}$ such that $|\Lambda|^{-1}\log(\sqrt{8}|\Lambda|)\leq \log( 2+2S_{1})$ for $n>N_{1}$ and fix $S>(2+2S_{1})^2$.
\end{proof}

Let $S$ and $N_{1}$ be as in Claim \ref{claim8} and assume $n>N_{1}$ then \eqref{eq:184} gives 
\begin{equation}\label{eq:185}
\delta > \frac{S^{-|\Lambda|}- 2e^{- (\min\gamma - 2\varepsilon_{1})(\|x-\nu\|_{q}-1)}}{\|x-\nu\|_{q}+1}.
\end{equation}
We show that there exists $\eta''>0$ and a point $x=x(n)\in\Z_{q}$ such that for large enough $n$, the RHS of \eqref{eq:185} is bounded below by $ \frac{\eta''S^{-|\Lambda|}}{|\Lambda|}$. Indeed, it suffices to show 
\begin{equation}\label{eq:61}
|\Lambda|\left(\frac{1-2e^{- (\min\gamma - 2\varepsilon_{1})(\|x-\nu\|_{q}-1)+|\Lambda|\log S}}{\|x-\nu\|_{q}+1}\right)>\eta''.
\end{equation}
If we pick $x=x(n)$ for which $\|x-\nu\|_{q}>1+\frac{|\Lambda|\log  S + \log4}{\min\gamma - 2\varepsilon_{1}}$
 then   $2e^{- (\min\gamma - 2\varepsilon_{1})(\|x-\nu\|_{q}-1)+|\Lambda|\log   S}<\frac{1}{2}$.  We require \eqref{eq:184} so $x(n)$ must satisfy $\|x-\nu\|_{q}>M+1$ and hence choose
\begin{equation}\label{eq:60}
\|x-\nu\|_{q} = \left\lceil M+1+1+\frac{|\Lambda|\log  S+\log4}{\min\gamma - 2\varepsilon_{1}}\right\rceil.
\end{equation}
  
For such $x\in\Z_{q}$, the LHS of \eqref{eq:61} is at least $\frac{|\Lambda|}{2(\|x-\nu\|_{q}+1)}$. Applying $M\leq|\Lambda|$ one obtains 
\begin{equation}\label{eq:16}
\frac{|\Lambda|}{2(\|x-\nu\|_{q}+1)} 
\geq \frac{|\Lambda|}{2\left(\left\lceil |\Lambda|+2+\frac{|\Lambda|\log  S+\log4}{\min\gamma - 2\varepsilon_{1}}\right\rceil+1\right)}
\end{equation}
the RHS of which is a strictly increasing, bounded function. Therefore there exists a bounded limit $y_{0}>0$ as $n\rightarrow \infty $. Picking $\eta''=\frac{y_{0}}{1.0001}$, then there exists a $N_{2}$ such that if $n>N_{2}$ then the RHS of \eqref{eq:16} is greater than $\eta''$.

Such a point $x\in\Z_{q}$ exists provided the circumference of $\Z_{q}$ is large enough and the centres $\nu,\nu'\in\Z_{q}$ are not too far away from each other. Indeed, let $A_{1}$ be such that $\frac{q\log  S}{ (\min\gamma - 2\varepsilon_{1})A_{1}}<\frac{q}{4}$ and let $A$ be such that, using \eqref{eq:60},
 $$
 \|x(n)-\nu\|_{q}\leq C_{1}\frac{q}{A}+4+\frac{(\frac{q}{A_{1}}+2C_{1}\frac{q}{A})\log  S+\log4}{\min\gamma - 2\varepsilon_{1}}<\frac{q}{4}+\frac{q\log  S}{ (\min\gamma - 2\varepsilon_{1}) A_{1}}<\frac{q}{2}
 $$ therefore such a point $x(n)\in\Z_{q}$ satisfying \eqref{eq:60} exists.  

Finally, rearranging $\delta > \frac{\eta''S^{-|\Lambda|}}{|\Lambda|}$ we obtain 
$\log\frac{1}{\delta}< -\log\eta'' + |\Lambda|\log  S + \log|\Lambda|.$
Letting $\eta'=1+\log S$ then increase $N_{2}$ such that for $n>N_{2}$, the RHS is less than $\eta'|\Lambda|$. This concludes the proof of Claim \ref{claim10} and of Lemma \ref{vectorsep}.
\end{proof}

\begin{lemma}\label{holderexponent}
Let $H_{\omega}$ be a Schr\"odinger operator with bounded  i.i.d potential \emph{(I)}. There exists $D$ and $\alpha>0$ such that for any $E\in \R$ and $M,\rho>0$,
\begin{equation}\label{eq:31}
\PR\bigg\{\omega\in \Omega : \exists \psi \in\C^{M}, \|\psi\|=1 \emph{ st } \|(H_{\omega}\upharpoonright[0,M-1]-E)\psi\|,\max_{x\in\{0,M-1\}}|\psi(x)|<\rho\bigg\}\leq MD\rho^{\alpha}.
\end{equation}
\end{lemma}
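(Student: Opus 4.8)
The plan is to translate \eqref{eq:31} into a contraction statement for the transfer matrix and then to feed it into the H\"older regularity of the Furstenberg measure; the only probabilistic inputs will be the positivity of the Lyapunov exponent (Furstenberg \cite{Furstenberg}) and Le Page's regularity theorem \cite{LePage,LePage2}, so that \emph{no regularity of $\mu$ is needed} --- this is the point of demanding smallness at \emph{both} endpoints.

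\emph{Reductions and reformulation.} Write $\mathcal{A}=\mathcal{A}(E,M,\rho)$ for the event in \eqref{eq:31}. If $\rho\geq 1$ the bound is trivial once $D\geq 1$, and if $|E|>\sup_{\omega}\|V_{\omega}\|_{\infty}+2$ the event is empty, so we may assume $\rho<1$, $E\in K$, and set $C_{0}=\sup_{\omega}\|V_{\omega}\|_{\infty}+|E|+2$. Propagating the three-term recurrence through the residual $\xi=(H_{\omega}\upharpoonright[0,M-1]-E)\psi$, exactly as in the proof of Lemma \ref{vectorsep}, gives $\|\psi\|\leq 3\sqrt{M}\,C_{0}^{M}\rho$ for every witness, hence $\mathcal{A}=\varnothing$ once $\rho<\tfrac13 M^{-1/2}C_{0}^{-M}$, and it remains to treat $\rho\in[\tfrac13 M^{-1/2}C_{0}^{-M},1)$. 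On $\mathcal{A}$ one has $\|\xi\|^{2}+|\psi(0)|^{2}+|\psi(M-1)|^{2}<3\rho^{2}$; reading the eigenvalue equation at the endpoints gives $|\psi(1)|,|\psi(M-2)|\leq C_{0}\sqrt3\rho$, so the transfer vectors $\mathbf v_{0}=\binom{\psi(1)}{\psi(0)}$ and $\mathbf v_{M-1}=\binom{\psi(M-1)}{\psi(M-2)}$ satisfy $\|\mathbf v_{0}\|,\|\mathbf v_{M-1}\|\lesssim\rho$ and are linked by $\mathbf v_{M-1}=\Phi_{\omega,[1,M-2]}(E)\mathbf v_{0}+\mathbf e$, with $\|\mathbf e\|\leq\sqrt3\rho\big(\sum_{k}\|\Phi_{\omega,[k+1,M-2]}(E)\|^{2}\big)^{1/2}$, where $\Phi_{\omega,[a,b]}(E)$ is the partial transfer matrix over $[a,b]$.

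\emph{The contraction.} The normalisation $\|\psi\|=1$, together with $|\psi(0)|,|\psi(M-1)|<M^{-1/2}$, forces an index $k^{*}\in[1,M-2]$ with $\big\|\binom{\psi(k^{*})}{\psi(k^{*}-1)}\big\|\geq M^{-1/2}$; by the reflection symmetry $x\mapsto M-1-x$ of \eqref{eq:31} (the potential is i.i.d.) we may assume $k^{*}\leq M/2$. Work on the regular event $\mathcal{R}$ on which $\log\|\Phi_{\omega,[a,b]}(E)\|$ lies within $\varepsilon(b-a+1)$ of $\gamma(E)(b-a+1)$ for every sub-block $[a,b]$ of length $\geq\delta M$; by the large-deviation bounds for random $\mathrm{SL}_{2}$-products (Le Page \cite{LePage}, cf.\ Lemma \ref{matrixelements}) and $\min\gamma>0$, $\PR(\mathcal{R}^{\mathsf c})\leq M^{2}e^{-cM}$. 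On $\mathcal{R}$ the error telescopes, $\|\mathbf e\|\lesssim\|\Phi_{\omega,[k^{*},M-2]}(E)\|\rho$, so $\big\|\Phi_{\omega,[k^{*},M-2]}(E)\binom{\psi(k^{*})}{\psi(k^{*}-1)}\big\|\lesssim\|\Phi_{\omega,[k^{*},M-2]}(E)\|\rho$; since this matrix has determinant $1$ and, on $\mathcal{R}$, operator norm $\geq e^{(\min\gamma-\varepsilon)(M/2-1)}$ (because $M-1-k^{*}\geq M/2-1$), decomposing $\binom{\psi(k^{*})}{\psi(k^{*}-1)}$ in its singular basis forces its direction to lie within projective distance $t\lesssim\sqrt{M}\,e^{-(\min\gamma-\varepsilon)(M/2-1)}\rho$ of the most contracted direction $\overline{w}^{-}_{\omega}(E)$ of $\Phi_{\omega,[k^{*},M-2]}(E)$. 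On the admissible range of $\rho$ one checks $t\leq\rho^{1+\beta}$ for a fixed $\beta=\beta(\mu)>0$ (one needs $\beta<\min\gamma/(2\log C_{0})$).

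\emph{Conclusion via Le Page; the obstacle.} The direction $\overline{w}^{-}_{\omega}(E)$ depends only on $V_{\omega}(k^{*}),\dots,V_{\omega}(M-2)$; conditioning on these, the direction $\overline{\binom{\psi(k^{*})}{\psi(k^{*}-1)}}$ is built from $V_{\omega}(0),\dots,V_{\omega}(k^{*}-1)$ as an image direction $\overline{\Phi_{\omega,[0,k^{*}-1]}(E)\,\mathbf z}$, and on $\mathcal{R}$ its conditional law is dominated by a fixed multiple of the Furstenberg measure $\nu_{E}$; by Le Page's theorem \cite{LePage,LePage2}, $\sup_{\overline v}\nu_{E}\big(\{\overline u:\text{dist}(\overline u,\overline v)<t\}\big)\leq Ct^{\alpha}$ uniformly for $E\in K$, with $\alpha=\alpha(\mu)>0$. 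Hence the conditional probability of the contraction is $\leq Ct^{\alpha}\leq C\rho^{\alpha(1+\beta)}$; taking expectations, summing over the at most $M$ admissible windows $k^{*}$ (this is the origin of the factor $M$ in the statement) and the two reflections, and adding $\PR(\mathcal{R}^{\mathsf c})\leq M^{2}e^{-cM}$, one obtains --- after absorbing the polynomial factors into $\rho^{\alpha\beta}$ on the admissible range and shrinking $\alpha$ --- the bound $\PR(\mathcal{A})\leq MD\rho^{\alpha}$. The delicate point is the decoupling in this last step: making rigorous that the conditional law of $\overline{\binom{\psi(k^{*})}{\psi(k^{*}-1)}}$ is dominated by a H\"older-regular measure that is \emph{independent of the right block}, \emph{despite} the witness $\psi$ --- hence the residual $\xi$ and the vector $\mathbf z$ --- being selected using the whole potential; this is exactly where the structure of \eqref{eq:31} (smallness at both ends, so that $\mathbf v_{0}$ and $\mathbf v_{M-1}$ are both negligible) and the regularity-free input of \cite{LePage,LePage2} are indispensable. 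Finally, a short complementary argument covers the regime $\rho\lesssim(e^{\min\gamma}/C_{0})^{M}$: with $T_{\omega}\psi=\big((H_{\omega}\upharpoonright[0,M-1]-E)\psi,\psi(0),\psi(M-1)\big)$ one has $\det(T_{\omega}^{*}T_{\omega})\geq P_{\omega,[0,M-1]}(E)^{2}$, so $\sigma_{\min}(T_{\omega})\geq C_{0}^{-M}|P_{\omega,[0,M-1]}(E)|$ and $\mathcal{A}\subseteq\{|P_{\omega,[0,M-1]}(E)|<C_{0}^{M}\rho\}$, which the lower large-deviation bound of Lemma \ref{matrixelements} together with $\min\gamma>0$ dispose of.
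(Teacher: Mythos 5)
Your reduction to transfer matrices and the contraction step are plausible in outline, but the proof does not close at the point you yourself flag as ``the delicate point'', and that point is not a technicality --- it is the whole difficulty. The event in \eqref{eq:31} is existential: the witness $\psi$ (hence the residual $\xi$, the launch vector $\mathbf v_{0}$, and the index $k^{*}$) may be chosen using the entire potential on $[0,M-1]$, so the direction $\overline{\big(\psi(k^{*}),\psi(k^{*}-1)\big)}$ is \emph{not} a function of the left block $V_{\omega}(0),\dots,V_{\omega}(k^{*}-1)$; consequently ``its conditional law given the right block'' is not the push-forward of the left-block randomness, and the asserted domination by a fixed multiple of the Furstenberg measure $\nu_{E}$ is neither defined in the sense you use it nor proved. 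Even if one tries to repair this by replacing the witness direction with an exact solution launched from the left boundary, the boundary data $(\psi(0),\psi(1))$ is constrained only in size, not in direction, so the achievable directions at $k^{*}$ sweep out essentially the whole projective line; and even for a \emph{fixed} launch direction the relevant law is the $k^{*}$-step convolution, not the stationary measure: for singular $\mu$ (e.g.\ Bernoulli, which the lemma must cover and which the paper's hypotheses allow) this finite-step law is atomic and obeys no H\"older bound unless $k^{*}$ is large, so at minimum you need a uniform finite-step non-concentration estimate together with a genuine argument controlling the adversarial witness --- neither is supplied. As written, the key inequality $\PR(\text{alignment}\mid\text{right block})\leq Ct^{\alpha}$ is an assertion, not a proof, so the argument has a real gap.

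For comparison, the paper's proof avoids transfer-matrix geometry and the conditioning problem altogether. Tile $[-L,L]$ by disjoint blocks whose interiors have length $M$; on the event, the witness extended by zero is an approximate eigenvector of $H_{\omega}\upharpoonright[-L,L]$ with error $3\rho$ (this is exactly where smallness at \emph{both} endpoints is used), witnesses on distinct blocks are orthogonal, and the minimax principle converts each occurrence into an eigenvalue in $[E-3\rho,E+3\rho]$. Taking expectations, using stationarity, dividing by the number of blocks and letting $L\to\infty$ gives $\PR(Q(\rho,M))\leq (M+2)\,\mathcal{N}([E-3\rho,E+3\rho])$, and the H\"older continuity of the integrated density of states from Le Page \cite{LePage2} yields the bound $MD\rho^{\alpha}$, with the factor $M$ appearing naturally as the block length in the density-of-states normalisation. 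If you want a regularity-free input that works for arbitrary bounded i.i.d.\ $\mu$, this IDS route is the one to take; your large-deviation reductions and the complementary small-$\rho$ regime then become unnecessary.
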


\begin{proof}
Fix $ E\in \R$ and $M,\rho>0$.   Partition the integers $\Z$ with intervals $ I_{j}=[a_{j}, b_{j}]$ with $\#  I_{j}= M+2$ and let $\widetilde{I}_{j}=[a_{j}+1,b_{j}-1]$.  Then there are at least $k=\left\lfloor\frac{2L+1}{M+2}\right\rfloor$  mutually disjoint subintervals $ I_{j}\subset [-L,L]$. For each $1 \leq j\leq k$, define the event 
\begin{equation}\label{eq:29}
Q_{j}(\rho)=\big\{\omega\in \Omega: \exists \psi\in\mathbb{C}^{[-L,L]}, \text{supp}(\psi)\subset \widetilde{I}_{j},\|\psi\|=1,\big\|(H_{\omega}\upharpoonright \widetilde{I}_{j}-E)\psi\upharpoonright{\widetilde{I}_{j}}\big\|,\big|\psi\upharpoonright{\partial \widetilde{I}_{j}}\big|<\rho\big\}.
\end{equation}
For any $j$, if $\omega\in Q_{j}(\rho)$, then there exists $\psi^{(j)}_{\omega}\in\mathbb{C}^{[-L,L]}$ satisfying the conditions of \eqref{eq:29} and so
\begin{equation}\label{eq:30}
\|(H_{\omega}\upharpoonright[-N,N]-E)\psi^{(j)}_{\omega} \| \leq \|(H_{\omega}\upharpoonright \widetilde{I}_{j}-E)\psi^{(j)}_{\omega}\upharpoonright{\widetilde{I}_{j}} \| + 2\max_{x\in\partial \widetilde{I}_{j}}\left|\psi^{(j)}_{\omega}(x)\right|< 3\rho.
\end{equation}

Given any $\omega\in \Omega$, define the indexing set   $J_{\omega}=\{j:\omega\in Q_{j}(\rho)\}$. Then for any  $i,j\in J_{\omega}$ with $j\neq i$, there exist orthonormal vectors $\psi^{(j)}_{\omega},\psi^{(i)}_{\omega}\in\C^{[-L,L]}$ which remain orthogonal under the transformation $H_{\omega}\upharpoonright[-L,L]$. Let $\varphi_{\omega}=\sum_{j\in J_{\omega}}c_{\omega,j}\psi^{(j)}_{\omega}$ with $\|\varphi_{\omega}\|=1$ then by orthogonality and \eqref{eq:30},
$$\|(H_{\omega}\upharpoonright[-L,L]-E)\varphi_{\omega}\|^{2}=\sum_{j\in J_{\omega}}|c_{\omega,j}|^{2}\|(H_{\omega}\upharpoonright[-L,L]-E)\psi^{(j)}_{\omega}\|^{2}<(3\rho)^{2}$$ which, is true for any $\varphi_{\omega}\in\text{span}\{\psi^{(j)}_{\omega}:j\in J_{\omega}\}$ with $\|\varphi_{\omega}\|=1$. The minimax theorem implies that the number of eigenvalues (counting multiplicity) in the $3\rho$-neighbourhood of $ E\in K$ is at least the cardinality of the indexing set $J_{\omega}$, namely 
$$f_{E,[-L,L]}(\omega)=\#\sigma(H_{\omega}\upharpoonright[-L,L])\cap[E-3\rho,E+3\rho]\geq \#J_{\omega}=\sum_{j=1}^{k}\mathds{1}_{Q_{j}(\rho)}(\omega).$$

Let $Q(\rho,M)$ denote the event in \eqref{eq:31} then since the interval $\widetilde{I}_{j}$ has a size of $\#\widetilde{I}_{j}=M$, stationarity implies $\PR (Q_{j}(\rho))=\PR (Q(\rho,M))$ for any $j$, therefore 
$$
\E [f_{E,[-L,L]}]\geq \E \sum_{j=1}^{k}\mathds{1}_{Q_{j}(\rho)} = \sum_{j=1}^{k}\PR (Q_{j}(\rho))=k\PR (Q(\rho,M)).
$$
In \cite{LePage2} it was shown that the integrated density of states is H\"older continuous of order $\alpha>0$ so $\mathcal{N}([E-3\rho,E+3\rho])\leq D'\rho^{\alpha}$, for  some $D'$,  therefore there exists $D$ such that 
$$\PR (Q(\rho,M))\leq \lim_{L\rightarrow\infty}\left\lfloor\frac{2L+1}{M+2}\right\rfloor^{-1}\E [f_{E,[-L,L]}]=(M+2)\mathcal{N}([E-3\rho,E+3\rho])\leq MD\rho^{\alpha}.$$ 
\end{proof}

\begin{proof}[Proof of Proposition \ref{eigensep}] Let $\varepsilon_{1}>0$ be the constant defined at the beginning of this section. Let $c_{0},C_{2}>0$ be constants to be defined later  and let $M=\lceil C_{2}\log \frac{1}{\delta_{0}}\rceil$ where $\delta_{0} = e^{-c_{0}n}$. Assume $e^{-\frac{1}{2}n\min\gamma}<\frac{1}{2}$ and fix $\varepsilon>0$. Let $I,I'\subset\Z_{q}$ be arcs with size $\#I,\#I'=2M+1$ and let $E_{i,I}(\omega)$ denote the $i$-th eigenvalue of the restriction $H_{q}[\omega,0]\upharpoonright I$. For any $ E\in K$ define 
$$
Q_{I}(E) = \left\{\omega\in \Omega: \exists \psi\in\mathbb{C}^{I}, \|\psi\|=1 
\text{ st } 
\|(  H_{q}[\omega,0]\upharpoonright I -E)\psi\|<9\delta_{0},  |\psi\upharpoonright{\partial I}|<\delta_{0} \right\}.
$$ 
We show that there exists $M=M(c_{0})$ and $A_{\star} = A_{\star}(\varepsilon,c_{0})$ such that for large enough $n$ and $q>A_{\star}n$, 
\begin{equation}\label{eq:69.5}
Q_{\text{Sep}}^{\mathsf{c}}(\varepsilon, q)\cap  Q_{\text{NR}}(\varepsilon_{1},n,q)\subseteq \bigcup_{\substack{I\cap I'=\varnothing; \\ 1\leq i\leq 2M+1}}Q_{I'}(E_{i,I}).
\end{equation}

Indeed, let $(E,\psi),(E',\psi')$ be eigen-pairs of the operator $H_{q}[\omega,0]$  with centres $\nu,\nu'\in\Z_{q}$ and $E\neq E'$. Let $\Lambda, \Lambda'\subset\Z_{q}$ denote balls of radius $M$ centred about $\nu$ and $\nu'$ respectively. Let $\widetilde\psi=\frac{\psi}{\left\|\psi\upharpoonright{\Lambda}\right\|}$ and $ \widetilde\psi'=\frac{\psi'}{\left\|\psi'\upharpoonright{\Lambda'}\right\|}$. We will show that it is the restriction $\widetilde{\psi}'$ which satisfies the conditions of $Q_{\Lambda'}(E_{i,\Lambda})$ for some eigenvalue $E_{i,\Lambda}$ of the restriction $\widetilde{\psi}$. 

We first ensure $\Lambda\cap\Lambda'=\varnothing$, which occurs when $\|\nu-\nu'\|_{q}>2M$. Let $N_{1}$,$A_{1}$,$\beta$,$\eta,A$ be the constants from Lemma \ref{vectorsep} and define $$A_{\star} = A_{\star}(\varepsilon,c_{0}) = \max\{2(C_{2}c_{0}+1)A_{1},A,2(C_{2}c_{0}+1)\eta^{-1}\varepsilon^{-1},\beta\varepsilon^{-1} ,c_{0}\varepsilon^{-1}\}.$$ 

Assume $|E-E'|<e^{-\varepsilon q}$ and $n>N_{1}$, we show that if $q>A_{\star}n$ then  $\|\nu-\nu'\|_{q}>2M$. Indeed, we can apply Lemma \ref{vectorsep} since $|E-E'|< e^{-\varepsilon q} <e^{-\beta n}$ and $N_{1}<n<\frac{q}{A}$.  If $ \|\nu-\nu'\|_{q}\geq \frac{q}{A_{1}}$, then $ \|\nu-\nu'\|_{q}\geq \frac{q}{A_{1}}>\frac{A_{\star}n}{A_{1}}$ so $A>2(C_{2}c_{0}+1)A_{1}$ implies $\|\nu-\nu'\|_{q}>2M$. On the other hand if $ \|\nu-\nu'\|_{q}<\frac{q}{A_{1}}$, then $\|\nu-\nu'\|_{q}>\eta\log\frac{1}{\delta}$, therefore $\varepsilon q> 2\lceil C_{2}nc_{0}\rceil\eta^{-1}$ implies $\|\nu-\nu'\|_{q}>2M$. 

We now show that the vector $\widetilde{\psi}'$ satisfies the conditions of $Q_{\Lambda'}(E_{i,\Lambda})$. Indeed, assume $N_{1}<n<\frac{q}{A_{\star}}$ and take $\omega\in Q_{\text{Sep}}^{\mathsf{c}}(\varepsilon, q)\cap  Q_{\text{NR}}(\varepsilon_{1},n,q)$ then $\Lambda\cap\Lambda'=\varnothing$ and due to $\varepsilon q>c_{0}n$, we have $|E-E'|<\delta_{0}$. Lemma \ref{distancedecay} gives $C$ such that for $\|x-\nu\|_{q}> Cn$,
\begin{equation}\label{eq:70}
|\psi(x)|<e^{-(\min\gamma-2\varepsilon_{1})\|x-\nu\|_{q}}.
\end{equation}
Let $C_{2}>Cc_{0}^{-1}$ be large enough so that \eqref{eq:70} continues to  hold for $\|x-\nu\|_{q}\geq M$. Letting $C_{2}>((\min\gamma-2\varepsilon_{1})c_{0})^{-1}$, then there exists $N_{2}=N_{2}(c_{0})$ such that for $n>N_{2}$, $q<e^{n}$ implies 
$$
\|\psi\upharpoonright\Lambda\|^{2}=1-\|\psi\upharpoonright\Lambda^{\mathsf{c}}\|^{2}>1-e^{-(\min\gamma-2\varepsilon_{1})C_{2}c_{0}n}(e^{n}+1-2C_{2}c_{0}n)>\frac{1}{4}.
$$
In particular, assume further that $C_{2}>(\min\gamma-2\varepsilon_{1})^{-1}$, then 
\begin{equation}\label{eq:23}
\begin{split}
\|(H_{q}[\omega,0]\upharpoonright\Lambda-E)\widetilde\psi\| 
&=(|\widetilde\psi({\nu-M-1})|^{2}+|\widetilde\psi({\nu+M+1})|^{2})^{\frac{1}{2}}\\
&< 2e^{-(\min\gamma-2\varepsilon_{1})(M+1)}\|\psi\upharpoonright\Lambda\|^{-1}
<4e^{-(\min\gamma-2\varepsilon_{1}) M} < 4\delta_{0}
\end{split}
\end{equation}
and  $|\widetilde{\psi}\upharpoonright{\partial\Lambda}| <e^{-(\min\gamma-2\varepsilon_{1}) M}<\delta_{0}$. In addition, since $C$ is independent from the eigenvectors we have $\|(H_{q}[\omega,0]\upharpoonright\Lambda' -E')\widetilde\psi'\| < 4\delta_{0}$ and 
$|\widetilde{\psi}'\upharpoonright{\partial\Lambda'}| <\delta_{0}.$

It follows from \eqref{eq:23} that $\text{dist}(E,\sigma(H_{q}[\omega,0]\upharpoonright\Lambda ))<4\delta_{0}$ so there exists $1\leq i \leq 2M+1$ such that $|E- E_{i,\Lambda}(\omega)|<4\delta_{0}$ and in particular 
\begin{equation}\label{eq:28}
\begin{split}
\|(H_{q}[\omega,0]\upharpoonright\Lambda'- E_{i,\Lambda}(\omega))\widetilde\psi'\| 
&\leq \|(  H_{q}[\omega,0]\upharpoonright\Lambda' -E')\widetilde\psi'\| + |E'- E_{i,\Lambda}(\omega)|\|\widetilde{\psi}'\| \\
&< 4\delta_{0}+|E'-E|+|E-E_{i,\Lambda}(\omega)| < 9\delta_{0}.
\end{split}	
\end{equation}
It follows from $|\widetilde{\psi}'\upharpoonright{\partial\Lambda'}| <\delta_{0}$ and \eqref{eq:28} that $\omega\in Q_{\Lambda'}(E_{i,\Lambda})$ where $\Lambda\cap\Lambda' = \varnothing$ and in particular $\omega\in \cup_{\substack{I\cap I'=\varnothing; \\ 1\leq i\leq 2M+1}}Q_{I'}(E_{i,I})$, provided that $C_{2}=C_{2}(c_{0})$ is as above, $n> \max\{N_{1},N_{2}\}$ and $ q>A_{\star}n$. 

Let $M$, $q$ and $n$ be as above, then \eqref{eq:69.5} implies
\begin{equation}\label{eq:140}
\PR (Q_{\text{Sep}}^{\mathsf{c}}(\varepsilon,q)\cap  Q_{\text{NR}}(\varepsilon_{1},n,q))\leq \sum_{\substack{I\cap I'=\varnothing; \\ 1\leq i\leq 2M+1}} \PR(Q_{I'}(E_{i,I})).
\end{equation}
By stationarity $\PR(Q_{I'}(E_{i,I})) = \PR(Q_{I}(E_{i,I+2M+1}))$ and since there are $q^{2}$ pairs of arcs on the circle, 
\begin{equation}\label{eq:141}
\sum_{\substack{I\cap I'=\varnothing; \\ 1\leq i\leq 2M+1}} \PR(Q_{I'}(E_{i,I})) =\sum_{\substack{I\cap I'=\varnothing; \\ 1\leq i\leq 2M+1}} \PR(Q_{I}(E_{i,I+2M+1})) \leq (2M+1)q^{2}\max_{1\leq i\leq 2M+1}\PR(Q_{I}(E_{i,I+2M+1})).
\end{equation}

Let $H[v]$ denote a tridiagonal matrix with $1$'s in the off diagonals and $v$ in its diagonal.  Let $E[v]$ be the $i$-th eigenvalue of the matrix $H[v]$. For any $\rho>0$ define
$$Q(\rho)=\left\{\begin{pmatrix} v_{1} \\ v_{2}\end{pmatrix}, v_{1},v_{2}\in\Omega_{0}^{2M+1}:  \exists \psi\in\mathbb{C}^{2M+1}, \|\psi\|=1 
\text{ st } 
\|(H[v_{1}]-E[v_{2}])\psi\|,  \max_{x\in\{0,2M\}}|\psi(x)|<\rho \right\}.$$

Let  $V  \sim\mu^{2(2M+1)}$ be a random variable. It follows from Fubini and Lemma \ref{holderexponent}, for some $\alpha>0$,
\begin{equation}\label{eq:48.5}
\E_{\mu^{2(2M+1)}}[\1_{Q(\rho)}(V)] =\E_{\mu^{2M+1}}[ \E_{\mu^{2M+1}}[\1_{Q(\rho)}(V)]]< \E_{\mu^{2M+1}}[ (2M+1)D\rho^{\alpha}] = (2M+1)D\rho^{\alpha}
\end{equation}
where we fixed the energy by fixing the second $2M+1$ random variables, to which we applied Lemma \ref{holderexponent} and subsequently integrated over the first $2M+1$ random variables. Fix an arc $I\subset \Z_{q}$ with $\#I=2M+1$, then  $Q_{I}(E_{i,I+2M+1})\subseteq \Omega_{0}^{\N}\times Q(9\delta_{0})\times \Omega_{0}^{\N}$ so by consistency of the distribution, 
\begin{equation}\label{eq:142}
\PR(Q_{I}(E_{i,I+2M+1}))\leq \mu^{2(2M+1)}(Q(9\delta_{0}))<(2M+1)D(9\delta_{0})^{\alpha}.
\end{equation}

It follows from \eqref{eq:140},\eqref{eq:141} and \eqref{eq:142}, that 
$$
\PR (  Q_{\text{Sep}}^{\mathsf{c}}(\varepsilon, q)\cap  Q_{\text{NR}}(\varepsilon_{1},n,q)) 
\leq (2M+1)^{2}q^{2}D(9\delta_{0})^{\alpha}
\leq \widetilde{D}q^{2}n^{2}e^{-\alpha c_{0}n}
$$ whenever $n>\max\{N_{1},N_{2}\}$. 
Take $c'(\varepsilon_{1})$, $N_{3}(\varepsilon_{1})$ from Proposition \ref{c0}. Fix $c_{0} = \frac{c'(\varepsilon_{1})}{\alpha}$ and $\widetilde{c}>0$ such that $\widetilde{c}<c'({\varepsilon_{1}})$, then there exists $N_{4}$ such that if $n>N_{4}$ then
$\widetilde{D}q^{2}n^{2}e^{-\alpha c_{0}n}+ q^{2}e^{-c'(\varepsilon_{1})n} < q^{2}e^{-\widetilde{c}n}.$

As well as $n<\frac{q}{A_{\star}}$, we also require $n>\frac{c\varepsilon}{\widetilde{c}}q$ in order to get $e^{-\widetilde{c}n} < e^{-c\varepsilon q}$. Therefore we look for an $n>N_{0}=\max\{N_{1},N_{2},N_{3}(\varepsilon_{1}),N_{4}\}$ which satisfies $\frac{c\varepsilon}{\widetilde{c}}q< n<\frac{q}{A_{\star}}$. Let $\varepsilon_{0}>0$ and $\widetilde{A}>0$ be such that $A_{\star} = \varepsilon^{-1}\widetilde{A}$ for all $\varepsilon\leq\varepsilon_{0}$, then for $\varepsilon<\varepsilon_{0}$ we require  $\frac{c\varepsilon}{\widetilde{c}}q< n<\varepsilon\frac{q}{\widetilde{A}}$. Firstly fix $c<\frac{\widetilde{c}}{\widetilde{A}}$. Let $C_{0} = \max\{\frac{\widetilde{c}}{c}N_{0}, 2(\frac{1}{\widetilde{A}}-\frac{c}{\widetilde{c}})^{-1}\}$ then for $\frac{C_{0}}{q}<\varepsilon<\varepsilon_{0}$ there exists $n$ satisfying  
$N_{0}<\frac{c\varepsilon}{\widetilde{c}}q< n<\frac{\varepsilon}{\widetilde{A}}q$ and 
\begin{equation*}
\begin{split}
 \PR (  Q_{\text{Sep}}^{\mathsf{c}}(\varepsilon, q)) 
 &= \PR (  Q_{\text{Sep}}^{\mathsf{c}}(\varepsilon, q)\cap  Q_{\text{NR}}(\varepsilon_{1},n,q)) +  \PR(  Q_{\text{Sep}}^{\mathsf{c}}(\varepsilon, q)\cap  Q_{\text{NR}}^{\mathsf{c}}(\varepsilon_{1},n,q)) \\
 &< \widetilde{D}q^{2}n^{2}e^{-\alpha c_{0}n}+ q^{2}e^{-c'(\varepsilon_{1})n} < q^{2}e^{-\widetilde{c}n} < q^{2}e^{-c\varepsilon q}.
\end{split}
\end{equation*}
\end{proof}

\end{document}